\newcommand{\eqdef}{\stackrel{\mathrm{def}}{=}}
\newtheorem{theorem}{Theorem}
\newtheorem{lemma}[theorem]{Lemma}
\newtheorem{proposition}[theorem]{Proposition}
\newtheorem{conjecture}[theorem]{Conjecture}
\newtheorem*{definition*}{Definition}
\theoremstyle{remark}
\newtheorem{remark}[theorem]{Remark}
\newcommand{\R}{\mathbb{R}} 
\newcommand{\N}{\mathbb{N}}
\newcommand{\e}{\varepsilon}
\renewcommand{\alpha}{\upalpha}
\renewcommand{\beta}{\upbeta}
\renewcommand{\gamma}{\upgamma}
\renewcommand{\delta}{\updelta}
\renewcommand{\zeta}{\upzeta}
\renewcommand{\eta}{\upeta}
\renewcommand{\theta}{\uptheta}
\renewcommand{\kappa}{\upkappa}
\renewcommand{\lambda}{\uplambda}
\renewcommand{\mu}{\upmu}
\renewcommand{\nu}{\upnu}
\renewcommand{\xi}{\upxi}
\renewcommand{\pi}{\uppi}
\renewcommand{\rho}{\uprho}
\renewcommand{\sigma}{\upsigma}
\renewcommand{\tau}{\uptau}
\renewcommand{\phi}{\upphi}
\renewcommand{\chi}{\upchi}
\renewcommand{\psi}{\uppsi}
\renewcommand{\omega}{\upomega}
\newcommand{\NN}{\mathsf{N}}
\newcommand{\BB}{\mathsf{B}}
\newcommand{\QQ}{\mathsf{Q}}
\newcommand{\E}{\mathbb{E}}
\newcommand{\mb}{\mathbb}
\newcommand{\mr}{\mathrm}
\newcommand{\vol}{\mathrm{vol}}
\newcommand{\dd}{\mathrm{d}}
\newcommand{\1}{\textbf{1}}
\newcommand{\p}[1]{\mathbb{P}\left( #1 \right)}
\newcommand{\scal}[2]{\!\left\langle #1, #2 \right\rangle\!}
\DeclareMathOperator{\sgn}{sgn}
\DeclareMathOperator{\proj}{Proj}
\title{Resilience of cube slicing in $\ell_p$}
\author{Alexandros Eskenazis}
\address{(A.~E.) CNRS, Institut de Math\'ematiques de Jussieu, Sorbonne Universit\'e, France and Trinity College, University of Cambridge, UK.}
\email{alexandros.eskenazis@imj-prg.fr, ae466@cam.ac.uk}
\author{Piotr Nayar}
\address{(P.~N.) University of Warsaw, 02-097 Warsaw, Poland.}
\email{nayar@mimuw@edu.pl} 
\author{Tomasz Tkocz}
\address{(T.~T.) Carnegie Mellon University, Pittsburgh, PA 15213, USA.}
\email{ttkocz@andrew.cmu.edu} 
\thanks{This material is based upon work supported by the NSF grant DMS-1929284 while A.~E.~was in residence at ICERM for the Harmonic Analysis and Convexity program. P.N.’s research was supported by the National Science Centre, Poland, grant 2018/31/D/ST1/0135. T.T.’s research was supported by the NSF grant DMS-2246484.}
\begin{document}

\maketitle
\vspace{-3mm}

\begin{abstract}
Ball's celebrated cube slicing theorem (1986) asserts that among hyperplane sections of the cube in $\R^n$, the central section orthogonal to $(1,1,0,\dots,0)$ has the greatest volume. We show that the same continues to hold for slicing $\ell_p$ balls when $p > 10^{15}$, as well as that the same hyperplane minimizes the volume of projections of  $\ell_q$ balls for $1 < q < 1 + 10^{-12}$. This extends Szarek's optimal Khinchin inequality (1976) which corresponds to $q=1$. These results thus address the resilience of the Ball--Szarek hyperplane in the ranges $2 < p < \infty$ and $1 < q < 2$, where analysis of the extremizers has been elusive since the works of Koldobsky (1998), Barthe--Naor (2002) and Oleszkiewicz (2003).

\end{abstract}

\bigskip

{\footnotesize
\noindent {\em 2020 Mathematics Subject Classification.} Primary: 52A40; Secondary: 52A20, 52A38, 60E15.

\noindent {\em Key words.} Sections of convex sets, projections of convex sets, $\ell_p^n$-balls, Khinchin-type inequalities.}


\section{Introduction}
Fix $p\in[1,\infty]$ and $n\in\N$. The present paper is devoted to the study of geometric parameters of the origin symmetric convex bodies
\begin{equation*}
\BB_p^n = \big\{x\in\R^n: \ \|x\|_p \leq 1\big\},
\end{equation*}
which are the closed unit balls of the normed spaces $\ell_p^n = (\R^n, \|\cdot\|_p)$, where for $p\in[1,\infty)$,
\begin{equation*}
\|x\|_p = \big( |x_1|^p+\ldots+|x_n|^p\big)^{1/p}
\end{equation*}
and $\|x\|_\infty = \max_{i=1,\ldots,n} |x_i|$, when $x=(x_1,\ldots,x_n)\in\R^n$. More specifically, we shall address the classical problem of identifying volume extremizing sections and projections of these bodies with respect to hyperplanes passing through the origin. This subject has attracted the interest of mathematicians for decades and a range of tools from probability and Fourier analysis have been employed in its study. We refer to the survey \cite{NT22} for a detailed account of classical results, recent advances and further references.


\subsection{Sections} Fix $p\in[1,\infty]$, $n\in\N$ and consider the following question for sections of $\BB_p^n$.
\smallskip
\begin{center}
\emph{Question 1. For which unit vectors $a$ in $\R^n$ is the volume of $\BB_p^n\cap a^\perp$ maximal or minimal?}
\end{center}
\smallskip
This problem and its variations has been intensively studied for five decades, since Hadwiger and Hensley showed in \cite{Had72,Hen79} that sections of the cube $\BB_\infty^n$ with coordinate hyperplanes $e_i^\perp$ have minimal volume. The reverse question of identifying the volume maximizing sections of the cube was answered the monumental work \cite{Bal86} of Ball, who proved that
\begin{equation} \label{eq:ball}
\mr{vol}\big( \BB_\infty^n \cap a^\perp\big) \leq \mr{vol}\big( \BB_\infty^n \cap \big(\tfrac{e_1+e_2}{\sqrt{2}}\big)^\perp\big).
\end{equation}

For $p<\infty$, the study of Question 1 was initiated by Meyer and Pajor. In \cite{MP88}, they extended the result of Hadwiger and Hensley by proving that sections of $\BB_p^n$ with coordinate hyperplanes $e_i^\perp$ have minimal volume for any $p\geq2$ and maximal volume when $p\in[1,2]$. In the reverse direction, they showed that when $p=1$, the section of the cross-polytope $\BB_1^n$ with the hyperplane orthogonal to $\tfrac{e_1+\cdots+e_n}{\sqrt{n}}$ has minimal volume, a result which was later extended to all values of $p\in[1,2]$ by Koldobsky \cite{Kol98} (see also \cite{ENT18} for a different probabilistic proof).

In view of the aforementioned results, the only missing case in the study of Question 1 is the identification of volume maximizing sections of $\BB_p^n$ when $p\in(2,\infty)$, a problem that has explicitly appeared in the literature multiple times \cite{Kol01, BN02, Ole03, KZ03, Kol05, MH11, Esk21, NT22}. In \cite{Ole03}, Oleszkiewicz made a crucial remark, showing that for $p\in(2,26)$ and $n$ large enough the section of $\BB_p^n$ with the hyperplane $\big( \tfrac{e_1+\cdots+e_n}{\sqrt{n}}\big)^\perp$ has in fact larger volume than the section with $\big(\tfrac{e_1+e_2}{\sqrt{2}}\big)^\perp$ and thus one cannot expect a Ball-type extremal for all $p>2$. In the same work, he speculated that Ball-type hyperplanes may maximize the volume of sections for \emph{sufficiently large} values of $p$. The first theorem of this work provides a positive answer to Oleszkiewicz's question.

\begin{theorem} \label{thm:sec}
There exists $26 < p_0 < 10^{15}$ such that for every $n\in\N$, $p\geq p_0$ and every unit vector $a$ in $\R^n$, we have
\begin{equation}
\vol\big(\BB_p^n\cap a^\perp\big) \leq \vol\big(\BB_p^n\cap\big(\tfrac{e_1+e_2}{\sqrt{2}}\big)^\perp\big).
\end{equation}
\end{theorem}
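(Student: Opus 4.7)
The plan is to follow Ball's Fourier-analytic framework from \cite{Bal86}, extended to $\ell_p$ balls in the spirit of Koldobsky \cite{Kol98} and Barthe--Naor \cite{BN02}. Let $g_p(x) = (2\Gamma(1+1/p))^{-1}e^{-|x|^p}$ denote the symmetric $p$-generalized Gaussian density on $\R$ and let $\varphi_p(t) = \int_\R g_p(x)\cos(tx)\,\D x$ be its Fourier transform. Using the probabilistic representation of $\ell_p$-ball sections (via the Schechtman--Zinn description of uniform measure on $\BB_p^n$, or directly via the Meyer--Pajor integral formula), there is a representation
\begin{equation*}
\vol(\BB_p^n \cap a^\perp) = A(n,p) \int_\R \prod_{j=1}^n \varphi_p(a_j t)\,\D t
\end{equation*}
for every unit vector $a \in \R^n$, where the normalization $A(n,p)$ is independent of $a$. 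Specializing to $a = (e_1+e_2)/\sqrt{2}$ yields $A(n,p)\sqrt{2}\int_\R \varphi_p(u)^2\,\D u$, so the theorem reduces to proving $\int_\R \prod_j |\varphi_p(a_j t)|\,\D t \leq \sqrt{2}\int_\R \varphi_p(u)^2\,\D u$.

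In the principal case $\max_j |a_j| \leq 1/\sqrt{2}$, one applies Ball's Hölder trick with exponents $1/a_j^2 \geq 2$; using $\sum a_j^2 = 1$, the inequality reduces to the one-variable Ball-type integral estimate
\begin{equation} \label{eq:ball-lp}
\sqrt{s}\int_\R |\varphi_p(u)|^s \,\D u \leq \sqrt{2}\int_\R \varphi_p(u)^2\,\D u, \qquad s \geq 2.
\end{equation}
This is the technical heart of the proof and the main obstacle. For $p = \infty$, $\varphi_\infty(t) = \sin(t)/t$ and \eqref{eq:ball-lp} is Ball's classical integral inequality, known to be strict for $s > 2$. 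I would establish \eqref{eq:ball-lp} for large finite $p$ by quantitative perturbation from $p = \infty$, split into two regimes. For moderate $2 \leq s \leq s_0$, use uniform convergence $\varphi_p \to \varphi_\infty$ on compacts together with polynomial decay $|\varphi_p(t)| \lesssim |t|^{-1}$ at infinity (obtained by integration by parts using smoothness of $g_p$ away from $0$) and dominated convergence, to upgrade Ball's strict inequality to \eqref{eq:ball-lp}. For large $s > s_0$, apply Laplace's method at the non-degenerate maximum $\varphi_p(0) = 1$, giving the asymptotic $\int_\R |\varphi_p|^s \sim (2\pi/(s\sigma_p^2))^{1/2}$ with $\sigma_p^2 = \int_\R x^2 g_p(x)\,\D x$, and verify \eqref{eq:ball-lp} asymptotically. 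The quantitative matching of the two regimes---balancing the gap in Ball's classical inequality on $[2,s_0]$ against the rate of convergence $\varphi_p \to \varphi_\infty$---is what produces the explicit, presumably rather loose, bound $p_0 \leq 10^{15}$.

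The complementary case $\max_j |a_j| > 1/\sqrt{2}$ should be handled separately by an elementary argument. Parametrizing $a^\perp$ by projection onto $e_1^\perp$ (which carries $(n-1)$-volume with a Jacobian of $1/|a_1|$), one checks the inclusion $\{y \in \R^{n-1} : |\langle a', y\rangle/a_1|^p + \|y\|_p^p \leq 1\} \subseteq \BB_p^{n-1}$ with $a' = (a_2,\ldots,a_n)$, yielding $\vol(\BB_p^n \cap a^\perp) \leq \vol(\BB_p^{n-1})/|a_1|$. A direct computation comparing $\vol(\BB_p^{n-1})$ with $\vol(\BB_p^n \cap ((e_1+e_2)/\sqrt{2})^\perp)$ for large $p$ then shows that $|a_1| > 1/\sqrt{2}$ is already enough to conclude, without appealing to the delicate inequality \eqref{eq:ball-lp}.
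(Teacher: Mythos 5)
Your proposal follows the Fourier-analytic route of Ball and Koldobsky rather than the paper's probabilistic/inductive route, and it contains a genuine gap precisely in the complementary case $a_1>1/\sqrt2$, where you claim the geometric projection argument suffices. Your inclusion is correct and does give
\[
\vol\big(\BB_p^n\cap a^\perp\big)\ \leq\ \frac{1}{a_1}\,\vol\big(\BB_p^{n-1}\big),
\]
but the target value is
\[
\vol\Big(\BB_p^n\cap\big(\tfrac{e_1+e_2}{\sqrt2}\big)^\perp\Big)\ =\ 2^{\frac12-\frac1p}\,\vol\big(\BB_p^{n-1}\big),
\]
so the geometric bound concludes only when $1/a_1\leq 2^{1/2-1/p}$, i.e.\ when $a_1\geq 2^{1/p-1/2}$. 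Since $2^{1/p-1/2}>1/\sqrt2$ for every finite $p$, the interval $1/\sqrt2< a_1< 2^{1/p-1/2}$ (of width $\approx \log 2/(p\sqrt2)$) is left uncovered: the H\"older reduction requires $a_1\leq 1/\sqrt2$, and the geometric argument kicks in only above $2^{1/p-1/2}$. For the cube ($p=\infty$) these two regimes meet exactly at $1/\sqrt2$, which is why Ball's proof closes; for $p<\infty$ they do not, and the ``direct computation'' you invoke at the end actually gives the opposite conclusion. This is not a technicality one can patch by a crude volume estimate, because the missing regime contains the near-extremal vectors where the inequality is essentially tight.

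The paper is quite explicit that this is exactly the obstruction which forced a different method: the authors remark that the Fourier approach cannot see past $a_1=1/\sqrt2$ and that the analogue of Ball's geometric slab argument fails for $\BB_p^n$ when $a_1$ is only slightly above $1/\sqrt2$. Their fix is entirely different in spirit: they write the normalized section function as a negative first moment of a sum of independent rotationally invariant random vectors in $\R^3$ (Proposition~\ref{prop:sec-rep}), handle vectors far from $(e_1+e_2)/\sqrt2$ via equi-continuity in $p$ (Lemma~\ref{lm:equicontinuity}) combined with the quantitative stability of Ball's cube slicing (Theorem~\ref{thm:cnt}), and handle vectors near the extremizer by an induction on $n$ that splits off the first two coordinates, applies Jensen's inequality, and reduces to a two-dimensional estimate (Proposition~\ref{prop:main}, supported by Lemma~\ref{lm:a1a2}). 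That inductive step is what covers the window around $a_1=1/\sqrt2$ that your Fourier/geometry dichotomy leaves open. The remainder of your plan---a perturbative proof of the one-variable inequality $\sqrt{s}\int|\varphi_p|^s\leq\sqrt2\int\varphi_p^2$ for all $s\geq2$ and large $p$ by splitting $s\in[2,s_0]$ from $s>s_0$---is plausible in outline and would require serious work on the tangency at $s=2$ and on explicit rates, but even if carried out it would only settle the regime $a_1\leq1/\sqrt2$ and would not close the gap above.
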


This is the first available result on maximal sections of $\BB_p^n$ for $p\in(2,\infty)$ and \emph{any} dimension $n\geq3$. A general conjecture for all choices of $p$ and $n$, predicting that the extremals undergo a phase transition, was proposed in \cite{Tko21} and \cite[Conjecture~2]{NT22}. Theorem \ref{thm:sec} partially confirms it. Let us formulate a more precise version of this conjecture.
\begin{conjecture}\label{conj:max-sec-Bpn}
For every $n \geq 3$, there is a unique $p_0(n)$ such that
\begin{equation}
\max_{a \in \mb{S}^{n-1}} \vol(\BB_p^n \cap a^\perp) = \begin{cases} \vol\big(\BB_p^n \cap \big(\frac{e_1+\ldots+e_n}{\sqrt{n}} \big)^\perp \big), & 2 < p \leq p_0(n), \\ \vol\big(\BB_p^n\cap\big(\tfrac{e_1+e_2}{\sqrt{2}}\big)^\perp\big), & p \geq p_0(n). \end{cases}
\end{equation}
Moreover,  $\lim_{n \to \infty} p_0(n) = 26.265...$ is the unique solution to the equation $2^{2/p} \Gamma (\frac{1}{p})^3 = \pi  p^2 \Gamma(\frac{3}{p})$ in the interval $(1,\infty)$.
\end{conjecture}
Let us remark that in the above conjecture the critical value $p=p_0(n)$ is given by the equation
\begin{equation}
	\vol\big(\BB_p^n \cap \big(\frac{e_1+\ldots+e_n}{\sqrt{n}} \big)^\perp \big) = \vol\big(\BB_p^n\cap\big(\tfrac{e_1+e_2}{\sqrt{2}}\big)^\perp\big).
\end{equation}
The limit of the ratio of these two volumes is equal to $\frac{2^{2/p} \Gamma (\frac{1}{p})^3}{\pi  p^2 \Gamma(\frac{3}{p})}$,  as was proved by Oleszkiewicz in \cite{Ole03} using Central Limit Theorem.


\subsection{Projections} Fix $q\in[1,\infty]$, $n\in\N$ and consider\mbox{ the dual question for projections of $\BB_q^n$.}
\smallskip
\begin{center}
\emph{Question 2. For which unit vectors $a$ in $\R^n$ is the volume of $\mr{Proj}_{a^\perp}\BB_q^n$ maximal or minimal?}
\end{center}
\smallskip
The current status of Question 2 is basically identical to that of Question 1. When $q=\infty$, Cauchy's projection formula shows that for every unit vector $a$, we have 
\begin{equation}
\mr{vol}\big(\mr{Proj}_{a^\perp} \BB_\infty^n\big) = \|a\|_1 \mr{vol}\big(\BB_\infty^{n-1}\big),
\end{equation}
which proves that the volume is minimized for $a=e_i$ and maximized for $a=\tfrac{e_1+\cdots+e_n}{\sqrt{n}}$. In the case of the cross-polytope $\BB_1^n$, similar reasoning based on Cauchy's formula (see \cite{Bal95}) shows that
\begin{equation} \label{eq:b1-formula}
\mr{vol}\big(\mr{Proj}_{a^\perp} \BB_1^n\big) = \frac{2^{n-1}}{(n-1)!} \mb{E}\Big|\sum_{j=1}^n a_j \e_j\Big|,
\end{equation}
where $\e_1,\e_2,\ldots$ is a sequence of independent symmetric $\pm 1$ random variables. Therefore, Jensen's inequality shows that $\mr{vol}\big(\mr{Proj}_{a^\perp} \BB_1^n\big)$ is maximal when $a=e_i$. In view of \eqref{eq:b1-formula}, identifying the volume minimizing projections of $\BB_1^n$ amounts to finding the sharp constant in the classical $L_1$-$L_2$ Khinchin inequality \cite{Khi22} which was famously discovered by Szarek. In geometric terms, the important result of \cite{Sza76} asserts that $\mr{vol}(\mr{Proj}_{a^\perp} \BB_1^n)$ is minimized for $a=\tfrac{e_1+ e_2}{\sqrt{2}}$.

The study of Question 2 for $1<q<\infty$ was initiated by Barthe and Naor in \cite{BN02}. In analogy to \cite{MP88}, they showed that projections of $\BB_q^n$ onto coordinate hyperplanes $e_i^\perp$ have minimal volume for $q\geq2$ and maximal volume for $q\in[1,2]$. Moreover, in the spirit of \cite{MP88, Kol98}, they proved that when $q\geq2$, the projections of $\BB_q^n$ onto the hyperplane orthogonal to $ \tfrac{e_1+\cdots+e_n}{\sqrt{n}}$ have maximal volume (see also \cite{KRZ04} for a different proof using the Fourier transform).

The volume minimizing hyperplane projections of $\BB_q^n$ remain unknown for $q\in(1,2)$. In analogy with Oleszkiewicz's observation \cite{Ole03} mentioned earlier, Barthe and Naor noticed that for $q\in\big(\tfrac{4}{3},2\big)$, the projection of $\BB_q^n$ onto the hyperplane $\big( \tfrac{e_1+\cdots+e_n}{\sqrt{n}}\big)^\perp$ has smaller volume than the projection onto $\big(\tfrac{e_1+e_2}{\sqrt{2}}\big)^\perp$ and thus one cannot expect a Szarek-type extremal for all $q\in[1,2)$. Our second theorem is the dual to Theorem \ref{thm:sec} and addresses Question 2 for $q$ near 1.

\begin{theorem} \label{thm:proj}
There exists $q_0\in\big(1+10^{-12},\tfrac{4}{3}\big)$ such that for every $n\in\N$, $q\in[1,q_0]$ and every unit vector $a$ in $\R^n$, we have
\begin{equation}
\vol\big(\mathrm{Proj}_{a^\perp}\BB_q^n\big) \geq \vol\big(\mathrm{Proj}_{\big(\tfrac{e_1+e_2}{\sqrt{2}}\big)^\perp} \BB_q^n\big).
\end{equation}
\end{theorem}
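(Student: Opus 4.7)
The starting point of the proof is a convenient integral representation for the volume of hyperplane projections. Using the Barthe--Naor formula from \cite{BN02} (which rests on the Schechtman--Zinn probabilistic representation of $\BB_q^n$), one can write
\begin{equation*}
\vol\big(\mathrm{Proj}_{a^\perp} \BB_q^n\big) = c_{n,q} \cdot \Phi_q(a),
\end{equation*}
where $\Phi_q(a)$ is a functional on unit vectors $a \in S^{n-1}$ that depends continuously on $q$ and reduces, after suitable normalisation, at $q=1$ to the Rademacher average $\E|\sum_j a_j \e_j|$. The desired inequality is thus equivalent to an $a$-dependent Khinchin-type statement $\Phi_q(a) \geq \Phi_q\big(\tfrac{e_1+e_2}{\sqrt{2}}\big)$, which at $q = 1$ is precisely Szarek's theorem. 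The task becomes to show that this inequality persists for every $q \in [1, 1+10^{-12}]$.

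The strategy is perturbation-plus-stability. Using Haagerup's representation $\E|X| = \tfrac{2}{\pi}\int_0^\infty t^{-2}(1 - \E\cos(tX))\,dt$, the functional takes the form $\Phi_q(a) = \int_0^\infty t^{-2}\big(1 - \prod_j \psi_q(a_j t)\big)\,dt$ for a characteristic function $\psi_q$ that deforms $\psi_1(t) = \cos t$. A first-order Taylor expansion at $q=1$ gives
\begin{equation*}
\Phi_q(a) = \Phi_1(a) + (q-1)\, \partial_q\Phi_q(a)\big|_{q=1^+} + R_q(a),
\end{equation*}
with $|R_q(a)| \leq C(q-1)^2$ uniformly in $a \in S^{n-1}$. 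The linear term becomes an explicit integral whose difference $\partial_q\Phi_q(a)\big|_{q=1^+} - \partial_q\Phi_q\big(\tfrac{e_1+e_2}{\sqrt{2}}\big)\big|_{q=1^+}$ must be controlled. To finish one needs a quantitative stability version of Szarek's inequality of the shape
\begin{equation*}
\Phi_1(a) - \Phi_1\Big(\tfrac{e_1+e_2}{\sqrt{2}}\Big) \geq c \cdot \mathrm{dist}(a, \mathcal{E})^2,
\end{equation*}
where $\mathcal{E} \subset S^{n-1}$ is the symmetry orbit of the Ball--Szarek vector. A dichotomy on $a$ then closes the argument: in the \emph{far} regime $\mathrm{dist}(a, \mathcal{E}) \geq \delta$ with $\delta \asymp (q-1)^{1/2}$, the quadratic stability gap dominates the $O(q-1)$ perturbation, while in the \emph{near} regime a local second-order expansion of $\Phi_q$ around the extremal---with Hessian evaluated in coordinates transverse to $\mathcal{E}$---absorbs the linear $q$-perturbation.

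The principal obstacle is twofold. First, one must establish a quantitative stability version of Szarek's inequality with an explicit quadratic lower bound: this is not provided by \cite{Sza76} directly and likely requires a careful revisiting of the Fourier-analytic proof there, perhaps exploiting Haagerup's integral representation combined with case analysis on the structure of $a$. Second, the local analysis near the extremal is delicate, since the Ball--Szarek vector has a nontrivial isotropy group (two equal coordinates) and the Hessian of $\Phi_1$ must be computed modulo the tangent directions to $\mathcal{E}$. The explicit numerical value $10^{-12}$ in the theorem's statement will be dictated by the interplay between the quantitative stability constant $c$ above and the uniform bound on the derivative $\partial_q\Phi_q\big|_{q=1^+}$.
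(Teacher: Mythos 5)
Your far regime is broadly aligned with the paper's: normalise via the Barthe--Naor representation, bound $\E|\sum a_j X_j|$ against $\E|\sum a_j\e_j|$ by a coupling that is $O(q-1)$, and invoke stability of Szarek's inequality when $a$ is away from the extremizer. One correction there: the shape you write, $\Phi_1(a)-\Phi_1(\tfrac{e_1+e_2}{\sqrt 2})\gtrsim \mathrm{dist}(a,\mathcal E)^2$, is weaker than the truth. The Rademacher functional has a genuine corner at the Ball--Szarek vector---adding a small third coordinate $a_3\approx t$ to $(\tfrac1{\sqrt2},\tfrac1{\sqrt2},0,\dots)$ increases $\E|\sum a_j\e_j|$ by roughly $t/2$, not $t^2$. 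The correct stability is \emph{linear} in the distance, and it is not something you need to extract from Szarek's original argument: it was proved by De--Diakonikolas--Servedio (2016), and the paper reproves it with an explicit constant $\kappa_1=8\cdot 10^{-5}$. So this half of your argument would go through essentially as in the paper once you use the right (stronger, already available) stability.

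The gap is in the near regime. Your proposal is a local second-order Taylor expansion of $\Phi_q$ around the extremal with a Hessian bound in transverse directions. Precisely because the $q=1$ functional has a corner there, its quadratic form is ill-defined; for $q>1$ the functional smooths out, but the curvature inherits a blow-up as $q\to 1^+$, and in any case you would need the Hessian bound \emph{uniformly in $n$}, which you do not address and which is not obvious (one would have to analyse the full $(n-1)\times(n-1)$ transverse Hessian with entries coming from moments of the degenerating densities $f_q$). Nothing in your sketch explains why this analysis closes. The paper avoids the Hessian entirely with a different device: an induction on $n$. One separates $X=a_1X_1+a_2X_2$ from $Y=\sum_{j>2}a_jX_j$, writes $\E|X+Y|=\E\max\{|X|,|Y|\}$ by symmetry and independence, applies the inductive hypothesis to $\E|Y|$ via Jensen to replace $|Y|$ by a deterministic $\alpha$, and then proves the resulting two-dimensional inequality $\E(\alpha-|X|)_+\geq \tfrac34\alpha^2$ using explicit control on $|a_1-a_2|$ (Lemma \ref{lm:a1a2}) and a uniform lower bound on the density of $|X_1|$ (Lemma \ref{lm:unif-under-proj}). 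This induction is the essential new idea of the proof and is what makes the near-extremizer case work uniformly in $n$; it is not present in your proposal, and without it the near regime is not closed.
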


One can formulate a similar conjecture to the one for sections.
\begin{conjecture}\label{conj:min-proj-Bqn}
For every $n \geq 3$, there is a unique $q_0(n)$ such that
\begin{equation}
\min_{a \in \mb{S}^{n-1}} \vol(\mathrm{Proj}_{a^\perp}\BB_q^n) = \begin{cases} \vol(\mathrm{Proj}_{\big( \tfrac{e_1+\cdots+e_n}{\sqrt{n}}\big)^\perp} \BB_q^n \big), & q_0(n) < q \leq 2, \\ \vol\big(\mathrm{Proj}_{\big(\tfrac{e_1+e_2}{\sqrt{2}}\big)^\perp} \BB_q^n\big), & 1 \leq q \leq q_0(n). \end{cases}
\end{equation}
Moreover,  $\lim_{n \to \infty} q_0(n) = \frac43$.
\end{conjecture}


\subsection{Methods} \label{sec:methods}
The delicacy of, say, Theorem \ref{thm:sec} lies in the need to find a \emph{universal} $p_0$, independent of the unit vector $a$ and the dimension $n\in\N$, such that for every $p\geq p_0$,
\begin{equation} \label{eq:goal-sec-met}
\vol\big(\BB_p^n\cap a^\perp\big) \leq \vol\big(\BB_p^n\cap\big(\tfrac{e_1+e_2}{\sqrt{2}}\big)^\perp\big).
\end{equation}
On the other hand, finding such a $p_0(a)$ for a \emph{fixed} unit vector $a$ in $\R^n$ is an immediate consequence of the continuity of the section function $p\mapsto \vol(\BB_p^n\cap a^\perp)$, as the equality cases in Ball's inequality \eqref{eq:ball} are known to be only the vectors of the form $\tfrac{\pm e_i \pm e_j}{\sqrt{2}}$, where $i\neq j$. 

Let $a=(a_1,\ldots,a_n)$ be a unit vector and without loss of generality assume that its coordinates are positive and ordered, i.e.~$a_1\geq a_2\geq\ldots\geq a_n\geq0$. Choosing $p_0$ uniformly for \eqref{eq:goal-sec-met} to hold requires radically different arguments in the following ranges for $a$.

\medskip

\noindent {\it Case 1. The vector $a$ is far from the extremizer $\tfrac{e_1+e_2}{\sqrt{2}}$, say $\big|a-\tfrac{e_1+e_2}{\sqrt{2}} \big|\geq\delta_0$ for some $\delta_0>0$.}

\smallskip

Here the constant  $\delta_0$ depends on $p$ and $|\cdot|$ stands for the standard Euclidean norm.  The key ingredient in this range is the dimension-free stability of Ball's inequality \eqref{eq:ball} with respect to the unit vector $a$ which has been established in recent works \cite{CNT22,MR22} (see also Theorem \ref{thm:cnt} below for a statement with explicit constants). These works imply that, under the assumption of Case 1, there is a positive deficit in Ball's inequality. Building on the simple-minded argument based on continuity described above, one needs to reason that all functions of the form \mbox{$p\mapsto \vol(\BB_p^n\cap a^\perp)$} are equi-continuous at $p=\infty$ with a dimension-independent modulus. This strategy is implemented in Lemma \ref{lm:equicontinuity} and relies on a combination of Busemann's theorem \cite{Bus49} with a probabilistic formula expressing the volume of sections of $\BB_p^n$ as a negative moment of a sum of independent rotationally invariant random vectors in $\R^3$, following \cite{KK05,KK11,CKT21}.

\smallskip

\noindent {\it Case 2. The vector $a$ is near the extremizer $\tfrac{e_1+e_2}{\sqrt{2}}$, say $\big|a-\tfrac{e_1+e_2}{\sqrt{2}} \big|<\delta_0$.}

\smallskip

This range is evidently the more subtle one, as soft continuity-based arguments are deemed to fail near the equality case. In order to amend this, we introduce a novel inductive strategy. As our starting point, we express again the section function $\mr{vol}(\BB_p^n\cap a^\perp)$ as a negative moment of a sum of independent random variables. After a suitable application of Jensen's inequality, we use the inductive hypothesis according to which the desired inequality holds in dimension $n-2$ and this reduces the problem to an explicit two-dimensional estimate. Quite stunningly, the resulting estimate does not hold when the unit vector $a$ is far from the extremizer $\tfrac{e_1+e_2}{\sqrt{2}}$ and thus our inductive argument cannot circumvent the stability results which were crucially used in Case 1. Nevertheless, a delicate analysis allows us to deduce the technical estimate under the assumptions of Case 2 for $\delta_0$ small enough as a function of $p$ \mbox{and $p$ sufficiently large, thus proving Theorem \ref{thm:sec}.}

The proof of Ball's inequality \eqref{eq:ball} and its stability from \cite{CNT22} crucially use the Fourier transform representation for the volume of sections and properties of a certain special function. However, even in Ball's original proof \cite{Bal86}, the Fourier transform method is unable to analyze the case that the largest component $a_1$ of $a$ is greater than $\tfrac{1}{\sqrt{2}}$, which is instead handled by an elegant geometric argument. Unfortunately, a similar geometric argument applied to $\BB_p^n$ for $p<\infty$ does not yield the optimal bound \eqref{eq:goal-sec-met} for $a_1$ slightly larger than $\tfrac{1}{\sqrt{2}}$, which creates the need for a different method.  Surprisingly, our inductive approach outlined above does not use the Fourier transform directly, even though it uses Ball's inequality \eqref{eq:ball} and its stability as a black box. In a way, this method complements the Fourier analytic approach with a probabilistic component which permits an analysis near the extremizer.

The proof of Theorem \ref{thm:proj} relies on a very similar strategy apart from purely technical differences. In this case, the probabilistic representation for the volume of projections is due to \cite{BN02} and the stability of Szarek's inequality was obtained in \cite{DDS16}.


\section{Preliminaries}

In this section we present some probabilistic representations for the volume of sections and projections of $\BB_p^n$ (see also \cite{NT22} and the references therein) along with some crucial technical estimates which will be used in the proofs of Theorems \ref{thm:sec} and \ref{thm:proj}.


\subsection{Probabilistic representation of the volume of sections} In \cite{KK05}, Kalton and Koldobsky discovered an elegant probabilistic representation of the volume of sections of a convex set $K$ in $\R^n$ in terms of negative moments of a random vector $X$ uniformly distributed on $K$.
In the case of $K=\BB_p^n$, this representation takes the following explicit form (see \cite{CKT21} or \cite[Lemma~42]{NT22}).

\begin{lemma}
Fix $p\in[1,\infty)$, $n\in\N$ and let $Y_1,Y_2,\ldots$ be i.i.d.~random variables with density $e^{-\beta_p^p|x|^p}$, where $\beta_p = 2\Gamma\big(1+\tfrac{1}{p}\big)$. Then, for every unit vector $a$ in $\R^n$ we have
\begin{equation} \label{eq:CKT}
\frac{\mr{vol}(\BB_p^n\cap a^\perp)}{\mr{vol}(\BB_p^{n-1})} = \lim_{s\downarrow-1} \frac{1+s}{2} \mb{E} \Big| \sum_{j=1}^n a_j Y_j \Big|^{s}.
\end{equation}
When $p=\infty$, the same identity holds with $Y_1,Y_2,\ldots$ being i.i.d.~uniform on $[-1,1]$.
\end{lemma}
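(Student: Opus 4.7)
The plan is to recognize the right-hand side of \eqref{eq:CKT} as the density $f_S$ of the random variable $S := \sum_{j=1}^n a_j Y_j$ at the origin, to compute this density by Fourier inversion, and finally to evaluate the resulting hyperplane integral via polar integration adapted to the $\ell_p$ geometry of $a^\perp$.

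As a first step, I would show that $\lim_{s\downarrow-1}\tfrac{1+s}{2}\mb{E}|S|^s = f_S(0)$ under the assumption that $f_S$ is continuous at $0$. Writing $\mb{E}|S|^s = 2\int_0^\infty y^s f_S(y)\,dy$ and splitting at a fixed $\e>0$, the tail on $[\e,\infty)$ contributes $O((1+s)\e^s)\to 0$, while the near-zero portion, after replacing $f_S(y)$ by $f_S(0)$ up to its modulus of continuity, yields $f_S(0)\,\e^{1+s}\to f_S(0)$ (sending $\e\to 0$ afterwards). The continuity of $f_S$ at $0$ itself follows from Fourier inversion
$$f_S(0) = \frac{1}{2\pi}\int_{\R}\prod_{j=1}^n \hat g_p(a_j t)\,dt,$$
where $\hat g_p$ is the characteristic function of $Y_j$; indeed, $\hat g_p$ is continuous with $\hat g_p(0)=1$ and decays at infinity, so the product is integrable provided $n\geq 2$ and $a$ has at least two non-zero coordinates (one may reduce to this case, as otherwise $a=\pm e_i$ and the identity is trivial).

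For the second step, I would unfold each factor as $\hat g_p(a_j t) = \int_{\R} e^{it a_j x_j} e^{-\beta_p^p|x_j|^p}\,dx_j$ and swap the $t$- and $x$-integrations (justified by Fubini after a Gaussian cut-off), then apply the distributional identity $\int_{\R} e^{it\langle a, x\rangle}\,dt = 2\pi\delta(\langle a, x\rangle)$ to obtain
$$f_S(0) = \int_{a^\perp} e^{-\beta_p^p\|x\|_p^p}\,d\sigma(x),$$
where $d\sigma$ is the $(n{-}1)$-dimensional Lebesgue measure on $a^\perp$ and $\|x\|_p$ denotes the restriction of the ambient $\ell_p$ norm.

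For the third step, I would evaluate the last integral by polar decomposition on the $(n{-}1)$-dimensional normed space $(a^\perp,\|\cdot\|_p|_{a^\perp})$, whose unit ball is precisely the section $\BB_p^n\cap a^\perp$. The standard radial identity
$$\int_{a^\perp} F(\|x\|_p)\,d\sigma(x) = (n-1)\vol(\BB_p^n\cap a^\perp)\int_0^\infty r^{n-2}F(r)\,dr,$$
applied to $F(r) = e^{-\beta_p^p r^p}$, together with $\int_0^\infty r^{n-2}e^{-\beta_p^p r^p}\,dr = \Gamma((n{-}1)/p)/(p\beta_p^{n-1})$ and the standard volume formula $\vol(\BB_p^{n-1}) = \beta_p^{n-1}/\Gamma(1+(n{-}1)/p)$, collapses exactly to $\vol(\BB_p^n\cap a^\perp)/\vol(\BB_p^{n-1})$. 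The $p=\infty$ case proceeds identically with $Y_j$ replaced by the indicated uniform density. The main obstacle lies in the first step: the continuity and Fourier-integrability arguments must be carried out uniformly across $p\in[1,\infty)$ and all unit vectors $a$, which is most delicate for $p$ close to $1$ where $\hat g_p$ decays only polynomially; the non-degeneracy reduction on $a$ handles this cleanly.
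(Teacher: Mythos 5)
Your proposal is correct, and it is in fact the standard argument underlying the cited Lemma~42 of \cite{NT22} (going back to Kalton--Koldobsky \cite{KK05} and \cite{CKT21}); the present paper simply cites these sources without reproducing the proof. A few remarks on the details. Step~1 is sound: since the $Y_j$ have an even density, $\mathbb{E}|S|^s=2\int_0^\infty y^s f_S(y)\,\mathrm{d}y$, and your $\varepsilon$-split together with the modulus of continuity of $f_S$ at $0$ gives $\lim_{s\downarrow -1}\tfrac{1+s}{2}\mathbb{E}|S|^s=f_S(0)$; one should record that $f_S$ is indeed bounded and continuous near $0$ (for $a$ not a coordinate vector this follows simply because $f_S$ is a convolution of a bounded continuous density with a probability measure, so the Fourier machinery is more than is needed). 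For Step~2, the Fourier inversion plus the distributional identity $\int e^{\iup t\langle a,x\rangle}\,\mathrm{d}t=2\pi\delta(\langle a,x\rangle)$ works, but a cleaner route avoiding any distributional step is the elementary disintegration along the fibers of $x\mapsto\langle a,x\rangle$: since $|a|=1$, a change of variables yields directly
\[
\mathbb{E}\,\phi(\langle a,Y\rangle)=\int_{\R}\phi(t)\Big(\int_{\{\langle a,x\rangle=t\}} e^{-\beta_p^p\|x\|_p^p}\,\mathrm{d}\sigma(x)\Big)\mathrm{d}t,
\]
so $f_S(0)=\int_{a^\perp}e^{-\beta_p^p\|x\|_p^p}\,\mathrm{d}\sigma(x)$ with no Fubini subtleties. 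Step~3, the polar identity on the $(n-1)$-dimensional normed space $(a^\perp,\|\cdot\|_p)$ followed by the Gamma computation, is exactly right and collapses the integral to $\vol(\BB_p^n\cap a^\perp)/\vol(\BB_p^{n-1})$.

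One caution, separate from your argument: the endpoint case as stated in the paper is off by a factor of $2$. With $\beta_p=2\Gamma(1+\tfrac1p)\to 2$, the density $e^{-\beta_p^p|x|^p}$ converges to $\1_{[-1/2,1/2]}$, so the $p=\infty$ version of \eqref{eq:CKT} should read ``$Y_j$ i.i.d.\ uniform on $[-\tfrac12,\tfrac12]$''. Indeed, with $Y_j$ uniform on $[-1,1]$ and $a=e_1$, the right-hand side of \eqref{eq:CKT} evaluates to $\tfrac12$, while the left-hand side is $1$. This is merely a typo in the statement and is immaterial to the uses made downstream (Proposition~\ref{prop:sec-rep} with $R_j\equiv1$ is correct as written), but your own ``proceeds identically with the indicated uniform density'' should be read against the corrected normalization.
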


Using the representation \eqref{eq:CKT}, we derive the following crucial formula for our analysis.

\begin{proposition} \label{prop:sec-rep}
Fix $p\in[1,\infty)$ and $n\in\N$. Let $R_1,R_2,\ldots$ be i.i.d. positive random variables with density $\alpha_p^{-1} x^pe^{-x^p}{\bf 1}_{x>0}$, where $\alpha_p = \tfrac{1}{p}\Gamma\big(1+\tfrac{1}{p}\big)$ and $\xi_1,\xi_2,\ldots$ be i.i.d.~random vectors uniformly distributed on the unit sphere $\mb{S}^2$, independent of the random variables $R_i$.  Then, for every unit vector $a$ in $\R^n$ we have
\begin{equation} \label{eq:neg-mom}
\frac{\mr{vol}(\BB_p^n\cap a^\perp)}{\mr{vol}(\BB_p^{n-1})} = \Gamma\Big(1+\frac{1}{p}\Big)\ \mb{E} \Big| \sum_{j=1}^n a_j R_j\xi_j \Big|^{-1},
\end{equation}
where $|\cdot|$ denotes the Euclidean norm on the right-hand side. When $p=\infty$, the same identity holds with deterministic coefficients $R_1=\cdots=R_n=1$.
\end{proposition}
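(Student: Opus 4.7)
The strategy is to convert the one-dimensional representation \eqref{eq:CKT} into the three-dimensional formula \eqref{eq:neg-mom} by averaging over the unit sphere $\mb{S}^2$. Two ingredients drive the argument. The first is an Archimedes-type identity: for every nonzero $v\in\R^3$ and every $s>-1$,
\begin{equation*}
\int_{\mb{S}^2} |v\cdot\theta|^s \D\sigma(\theta) = \frac{|v|^s}{s+1},
\end{equation*}
where $\sigma$ is the uniform probability measure on $\mb{S}^2$; this follows because for $\theta$ uniform on $\mb{S}^2$ the projection $\theta\cdot \hat v$ is uniform on $[-1,1]$. The second is a direct density computation: since $(\xi_j)_1$ is uniform on $[-1,1]$ conditionally on $R_j$, the first coordinate $W_j := R_j(\xi_j)_1$ has density
\begin{equation*}
w \mapsto \int_{|w|}^\infty \frac{\alpha_p^{-1}\, r^{p-1}}{2}\, e^{-r^p} \D r \;=\; \frac{1}{2p\alpha_p}\, e^{-|w|^p} \;=\; \frac{1}{\beta_p}\, e^{-|w|^p},
\end{equation*}
so that $W_j$ is distributed as $\beta_p Y_j$, with $Y_j$ as in \eqref{eq:CKT}.

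Set $S = \sum_{j=1}^n a_j R_j \xi_j \in \R^3$ and $T = \sum_{j=1}^n a_j Y_j \in \R$. The law of $S$ is rotationally invariant, hence conditioning on $S$ and applying the first ingredient yields, for $-1<s<0$,
\begin{equation*}
\E|S_1|^s \;=\; \E\int_{\mb{S}^2} |S\cdot\theta|^s\, \D\sigma(\theta) \;=\; \frac{\E|S|^s}{s+1}.
\end{equation*}
On the other hand, $S_1 = \sum_{j=1}^n a_j W_j \stackrel{d}{=} \beta_p T$, so $\E|S_1|^s = \beta_p^s\, \E|T|^s$. Combining these gives $\E|S|^s = (1+s)\,\beta_p^s\, \E|T|^s$ for all $-1<s<0$. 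Letting $s\downarrow -1$ and invoking \eqref{eq:CKT}, the right-hand side tends to $\tfrac{2}{\beta_p}\cdot\tfrac{\vol(\BB_p^n\cap a^\perp)}{\vol(\BB_p^{n-1})}$, and since $\beta_p = 2\Gamma(1+\tfrac{1}{p})$, rearrangement yields \eqref{eq:neg-mom}. The case $p=\infty$ is identical once one observes that then $(\xi_j)_1$ itself is uniform on $[-1,1]$, matching the law of $Y_j$.

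The main technical point is justifying the limit $s\downarrow -1$ on the left-hand side, which amounts to $\E|S|^{-1}<\infty$ together with $\E|S|^s\to \E|S|^{-1}$. Finiteness relies on the fact that in $\R^3$ the Jacobian factor $r^2$ in spherical coordinates dominates the $|x|^{-1}$ singularity at the origin: already the single summand $a_1 R_1 \xi_1$ has a radial density making $\E|a_1 R_1 \xi_1|^{-1}$ finite, and convolution with the remaining (independent) terms preserves this. The convergence then follows from the elementary bound $|S|^s \leq 1 + |S|^{-1}$ valid for $-1\leq s\leq 0$ and dominated convergence.
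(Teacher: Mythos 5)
Your proof is correct and follows essentially the same route as the paper: the core step in both is converting between the sphere $\mb{S}^2$ and the interval $[-1,1]$ via the Archimedes hat-box projection, identifying $\beta_p Y_j$ with $R_j U_j$ (equivalently $R_j(\xi_j)_1$), and then passing $s\downarrow -1$ in \eqref{eq:CKT}. The only presentational differences are that the paper obtains the decomposition $Y\stackrel{d}{=}\beta_p^{-1}R U$ as a special case of a general unimodality fact and cites K\"onig--Kwapie\'n \cite[Proposition~4]{KK01} for the identity $\mb{E}\big|\sum x_j\xi_j\big|^s=(1+s)\mb{E}\big|\sum x_j U_j\big|^s$, whereas you verify the density of $R_j(\xi_j)_1$ directly and re-derive that identity from rotational invariance; you also make explicit the dominated-convergence justification of the limit $s\downarrow -1$, which the paper leaves implicit.
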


\begin{proof}
We shall assume that $p<\infty$ and the endpoint case follows (see also \cite{KK11}). Let $Y$ have density $e^{-\beta_p^p|x|^p}$, $R$ have density $\alpha_p^{-1}x^pe^{-x^p}{\bf 1}_{x>0}$ and $U$ be uniform on $[-1,1]$, independent of $R$. Then $Y$ has the same distribution as $\beta_p^{-1}RU$. More generally, if $V$ is a random variable with even density $g$ which is nonincreasing and of class $C^1$ on $(0,+\infty)$, then $V$ has the same distribution as $R_0U$, where $R_0$ has density $-2rg'(r)$ on $(0,\infty)$. Indeed, for $t>0$ we have
\begin{align*}
	\mb{P}\left\{R_0U>t\right\} & = \mb{P}\left\{U> \frac{t}{R_0}\right\} = \int_0^\infty \mb{P}\left\{U> \frac{t}{r}\right\} (-2r g'(r)) \dd r =  - \int_t^\infty \left(1-\frac{t}{r} \right) r g'(r) \dd r \\
	& = - \int_t^\infty \left(r-t \right) g'(r) \dd r =  \int_t^\infty g(r) \dd r = \mb{P}\left\{V>t\right\}.
\end{align*}
Therefore, \eqref{eq:CKT} can be rewritten as
\begin{equation} \label{eq:find-R}
\frac{\mr{vol}(\BB_p^n\cap a^\perp)}{\mr{vol}(\BB_p^{n-1})} = \lim_{s\downarrow-1} \frac{1+s}{2\beta_p^s} \mb{E} \Big| \sum_{j=1}^n a_j R_j U_j \Big|^{s}.
\end{equation}
By a result of K\"onig and Kwapie\'n \cite[Proposition~4]{KK01}, for every $x_1,\ldots,x_n\in\R$ and $s>-1$,
\begin{equation} \label{eq:kk01}
\mb{E}\Big| \sum_{j=1}^n x_j \xi_j\Big|^s = (1+s) \mb{E}\Big| \sum_{j=1}^nx_j U_j\Big|^s.
\end{equation}
Substituting \eqref{eq:kk01} in \eqref{eq:find-R} conditionally on $R_j$ and substituting the value of $\beta_p$ proves \eqref{eq:neg-mom}.
\end{proof}


\subsection{Probabilistic representation of the volume of projections}
The analogue of Proposition \ref{prop:sec-rep} for projections, expressing the normalized volume of projections of $\BB_q^n$ as an $L_1$-moment of a sum of independent random variables has been established in \cite[Proposition~2]{BN02}. 

\begin{proposition} [Barthe--Naor, \cite{BN02}] \label{prop:proj-rep}
Fix $q\in(1,\infty)$ and $n\in\N$. Let $X_1,X_2,\ldots$ be i.i.d.~random variables with density $\gamma_q^{-1} |x|^{\frac{2-q}{q-1}} e^{-|x|^{\frac{q}{q-1}}}$, where $\gamma_q =2(q-1)\Gamma\big(1+\tfrac{1}{q}\big)$. Then, for every unit vector $a$ in $\R^n$ we have
\begin{equation} \label{eq:bar-nao}
\frac{\mr{vol}(\mr{Proj}_{a^\perp}\BB_q^n)}{\mr{vol}(\BB_q^{n-1})} =\Gamma\Big(\frac{1}{q}\Big) \mb{E} \Big| \sum_{j=1}^n a_j X_j \Big|.
\end{equation}
When $q=1$, the identity reduces to the consequence \eqref{eq:b1-formula} of the Cauchy projection formula.
\end{proposition}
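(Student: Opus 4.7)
The plan is to combine Cauchy's projection formula, the coarea formula, and a linearising change of variables. Setting $F(x) = \|x\|_q^q$, the outer unit normal on $\partial\BB_q^n$ is $\nu(x) = \nabla F(x)/\|\nabla F(x)\|_2$ with $\nabla F(x) = q(\sgn(x_i)|x_i|^{q-1})_i$, so Cauchy's projection formula reads
\begin{equation*}
\vol(\mr{Proj}_{a^\perp}\BB_q^n) = \tfrac{1}{2}\int_{\partial \BB_q^n} |\langle a, \nu(x)\rangle|\,\dd\mathcal{H}^{n-1}(x).
\end{equation*}
Applying the coarea formula to $F$ against a radial weight $\phi(F(x))$ and using the $q$-homogeneity of $F$ (so that $\{F=t\}=t^{1/q}\partial\BB_q^n$ and surface area scales by $t^{(n-1)/q}$), one gets the master identity
\begin{equation*}
\int_{\R^n} |\langle a, \nabla F(x)\rangle|\,\phi(F(x))\,\dd x = 2\vol(\mr{Proj}_{a^\perp}\BB_q^n)\int_0^\infty t^{(n-1)/q}\phi(t)\,\dd t.
\end{equation*}

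I would next perform the bijective substitution $y_i = \sgn(x_i)|x_i|^{q-1}$, whose Jacobian is $(q-1)^{-1}|y_i|^{(2-q)/(q-1)}$ in each coordinate and under which $|x_i|^q = |y_i|^{q/(q-1)}$. Choosing $\phi(t)=e^{-t}$ splits the weight $e^{-F(x)}$ multiplicatively in the new variables, so the left-hand side of the master identity becomes
\begin{equation*}
\frac{q}{(q-1)^n}\int_{\R^n}\Big|\sum_j a_j y_j\Big|\prod_j |y_j|^{(2-q)/(q-1)}e^{-|y_j|^{q/(q-1)}}\,\dd y = \frac{q\,\gamma_q^n}{(q-1)^n}\,\E\Big|\sum_j a_j X_j\Big|,
\end{equation*}
where $\gamma_q$ is the normaliser of the density appearing in the statement. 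A short computation via the substitution $u=|y|^{q/(q-1)}$ and the identity $\Gamma(1/q)=q\Gamma(1+1/q)$ gives $\gamma_q = 2(q-1)\Gamma(1+1/q)$, matching the proposition.

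To close the proof, I would equate the two expressions for the master integral and use $\int_0^\infty t^{(n-1)/q}e^{-t}\,\dd t = \Gamma(1+(n-1)/q)$ together with the classical formula $\vol(\BB_q^{n-1}) = (2\Gamma(1+1/q))^{n-1}/\Gamma(1+(n-1)/q)$. All constants collapse to the single factor $q\,\Gamma(1+1/q)=\Gamma(1/q)$, yielding \eqref{eq:bar-nao}. The endpoint $q=1$ is then recovered either directly from Cauchy's formula as in \eqref{eq:b1-formula} or by passing to the limit $q\downarrow 1$, in which case the density of $X_j$ concentrates on $\{\pm 1\}$. I expect the main difficulty to be purely bookkeeping: reconciling four independent $\Gamma$-function and power prefactors---arising from the coarea scaling $t^{(n-1)/q}$, the Jacobian, the normaliser $\gamma_q^n$, and $\vol(\BB_q^{n-1})$---into the single constant $\Gamma(1/q)$, where any arithmetic slip easily produces a spurious factor of $q$ or $q-1$. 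The non-smoothness of the map $x_i\mapsto \sgn(x_i)|x_i|^{q-1}$ at $x_i=0$ is a measure-zero issue requiring only a routine justification.
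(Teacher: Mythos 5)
Your proof is correct: Cauchy's projection formula, the coarea formula applied to $F=\|\cdot\|_q^q$ with the weight $e^{-F}$, and the linearising change of variables $y_i=\sgn(x_i)|x_i|^{q-1}$ combine exactly as you claim, and the four prefactors — $q$ from $\nabla F$, $(q-1)^{-n}$ from the Jacobian, $\gamma_q^n$ from normalising the density, and $\Gamma(1+(n-1)/q)$ together with $\vol(\BB_q^{n-1})=(2\Gamma(1+1/q))^{n-1}/\Gamma(1+(n-1)/q)$ — do collapse to the single constant $q\,\Gamma(1+1/q)=\Gamma(1/q)$. The paper itself does not prove this proposition but cites it from Barthe--Naor \cite{BN02}, and your derivation is essentially the argument given there.
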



\subsection{Stability estimates}
As explained in the introduction, a crucial step in the proofs of Theorems \ref{thm:sec} and \ref{thm:proj} is a reduction to sections and projections with respect to hyperplanes near the extremizer $\big(\tfrac{e_1+e_2}{\sqrt{2}}\big)^\perp$. This will be a consequence of two recent works \cite{DDS16,CNT22} establishing the stability of the inequalities of Szarek \cite{Sza76} and Ball \cite{Bal86} with respect to the unit normal vector $a$. For the case of projections, we will use the following robust Szarek inequality proven in \cite{DDS16}.


\begin{theorem} [De--Diakonikolas--Servedio, \cite{DDS16}] \label{thm:dds}
There exists $\kappa_1>0$ such that for every $n\in\N$ and every unit vector $a$ in $\R^n$ with $a_1\geq\cdots\geq a_n\geq0$, we have
\begin{equation} \label{eq:dds}
\mb{E}\Big| \sum_{j=1}^n a_j \e_j\Big| \geq \frac{1}{\sqrt{2}} +\kappa_1 \Big| a- \frac{e_1+e_2}{\sqrt{2}}\Big|.
\end{equation}
We can take $\kappa_1=8\cdot 10^{-5}$ in this inequality.
\end{theorem}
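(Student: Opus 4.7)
The plan is a case analysis splitting into a near-extremizer regime and a far regime, using a conditional-expectation identity to isolate the contribution of $a_3,\ldots,a_n$.

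Write $S := \sum_j a_j \e_j$, $a^* := (e_1+e_2)/\sqrt{2}$, and $\delta := |a - a^*|$. Enumerating the four sign patterns of $(\e_1,\e_2)$ and using $|x+t|+|x-t| = 2\max(|x|,|t|)$ yields
\[
\E_{\e_1,\e_2} |a_1\e_1+a_2\e_2+t| = \tfrac12 \max(a_1+a_2,|t|) + \tfrac12\max(|a_1-a_2|,|t|), \qquad (t\in\R).
\]
Applied with $t = T := \sum_{j\geq 3} a_j\e_j$ and bounding $\max(a_1+a_2,|T|) \geq a_1+a_2$ gives
\[
\E|S| \;\geq\; \tfrac12(a_1+a_2) + \tfrac12 \E\max(|a_1-a_2|,|T|).
\]
A direct expansion of $\delta^2 = 2 - \sqrt{2}(a_1+a_2)$ produces $\tfrac12(a_1+a_2) = \tfrac{1}{\sqrt{2}} - \tfrac{\delta^2}{2\sqrt{2}}$, and setting $\tau^2 := \sum_{j\geq 3}a_j^2$ gives the companion identity $(a_1-a_2)^2 = 2(\delta^2-\tau^2) - \delta^4/2$.

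In the near regime $\delta \leq \delta_0$, I invoke Szarek's inequality applied to $T$ itself (a Rademacher sum of length $n-2$), obtaining $\E|T| \geq \tau/\sqrt{2}$, hence by Jensen $\E\max(|a_1-a_2|,|T|) \geq \max(|a_1-a_2|,\tau/\sqrt{2})$. A one-variable minimization of this max over admissible $\tau \in [0, \delta\sqrt{1-\delta^2/4}]$ subject to the constraint on $|a_1-a_2|$ --- the two terms balance at $\tau^2 = (4\delta^2-\delta^4)/5$ --- shows that the max is bounded below by an explicit positive multiple of $\delta$. Choosing $\delta_0$ small enough to absorb the quadratic loss $-\delta^2/(2\sqrt{2})$ delivers the linear bound $\E|S| - 1/\sqrt{2} \geq \kappa_1\delta$.

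In the far regime $\delta \geq \delta_0$, the goal is a uniform positive gap $\E|S| - 1/\sqrt{2} \geq c_0 > 0$ with $c_0$ independent of $n$. By the truncation estimate $|\E|S| - \E|S_N|| \leq \bigl(\sum_{j>N}a_j^2\bigr)^{1/2}$ with $S_N = \sum_{j\leq N} a_j\e_j$, the problem reduces to vectors supported on at most $N = N(\delta_0)$ coordinates; on the resulting compact set, Szarek's strict inequality together with its equality characterization (only $\pm(e_i\pm e_j)/\sqrt{2}$) and continuity produce the desired gap.

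The chief obstacle is pinning down the explicit value $\kappa_1 = 8\cdot 10^{-5}$. Both the Jensen step in the near regime and the truncation-plus-compactness argument in the far regime are essentially qualitative, and obtaining a workable numerical constant likely requires a more quantitative approach --- for instance, Fourier-analytic analysis of the identity $\E|S| = \tfrac{2}{\pi}\int_0^\infty t^{-2}\bigl(1 - \prod_j \cos(a_j t)\bigr)\,dt$ in a neighborhood of $a^*$, together with a quantitative Berry--Esseen estimate in the far regime.
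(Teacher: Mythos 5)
Your near-extremizer argument is essentially the one the paper uses (Lemma~\ref{lm:stab-szarek}): decompose $S = (a_1\e_1+a_2\e_2) + T$, apply Szarek plus Jensen to the tail $T$ to replace $|T|$ by $\tau/\sqrt{2} = \theta$, and then minimize the two-variable expression $\frac12(a_1+a_2) + \frac12\max(a_1-a_2,\theta)$ over the admissible range; the balance point $\tau^2 = (4\delta^2-\delta^4)/5$ you identify is exactly the paper's $b_2^2 = \frac25 - \frac15 b_1^2$ boundary. That part of the proposal is sound, though you stop short of extracting the numerical $c_0$.

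The far regime is where there is a genuine gap, and it is not merely a failure to be quantitative. Your claim that truncation reduces the problem to vectors supported on at most $N = N(\delta_0)$ coordinates is false: the tail $\bigl(\sum_{j>N} a_j^2\bigr)^{1/2}$ is not small uniformly in $a$ and $n$ for any fixed $N$. Take $a = n^{-1/2}(1,\ldots,1)$, for which $\delta(a) \to \sqrt{2}$ (so it lies in the far regime for large $n$) and $\sum_{j>N}a_j^2 = 1 - N/n \to 1$ for every fixed $N$. So no compactness in a bounded-dimensional space is available, and the continuity argument you invoke has nothing to run on. The missing idea is precisely the one the paper brings in via Haagerup's inequality $\E\bigl|\sum_j a_j\e_j\bigr| \geq \sum_j a_j^2 F(a_j^{-2})$ and the monotonicity of $F$: when $\delta(a)$ is large and $a_1 \le 1/\sqrt{2}$, all of $a_2,\ldots,a_n$ are forced below $\frac{2-\delta_0}{2\sqrt{2}}$, so each $F(a_j^{-2})$ is bounded away from $F(2) = 1/\sqrt{2}$, and the deficit accumulates \emph{regardless of how the tail mass is distributed}. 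This is exactly the mechanism that handles the "spread-out" vectors that your truncation cannot. (Your parenthetical mention of a Berry--Esseen estimate at the end gestures at the right phenomenon, but is not developed into an argument.) The remaining cases in the paper --- $a_1$ slightly above $1/\sqrt{2}$ handled by a Lipschitz reduction, and $a_1$ well above $1/\sqrt{2}$ by Jensen --- are also needed, since Haagerup's bound alone does not give the statement when $a_1 > 1/\sqrt{2}$. Finally, as you note yourself, the theorem statement asserts the explicit value $\kappa_1 = 8\cdot 10^{-5}$, which your proposal does not deliver; the paper obtains it by computing $c_0, c_1, c_2$ from the three quantitative lemmas with $\delta_0 = 0.66$ and $\gamma_0 = 8\cdot 10^{-5}$ and taking the minimum.
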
 

For the case of sections, we will use the following robust Ball inequality of \cite{CNT22}. We express it in the equivalent negative moment formulation which follows from Proposition \ref{prop:sec-rep}.

\begin{theorem} [Chasapis--Nayar--Tkocz, \cite{CNT22}] \label{thm:cnt}
There exists $\kappa_\infty>0$ such that for every $n\in\N$ and every unit vector $a$ in $\R^n$ with $a_1\geq\cdots\geq a_n\geq0$, we have
\begin{equation} \label{eq:dds}
\mb{E}\Big| \sum_{j=1}^n a_j \xi_j\Big|^{-1} \leq \sqrt{2} -\kappa_\infty \Big| a- \frac{e_1+e_2}{\sqrt{2}}\Big|.
\end{equation}
We can take $\kappa_\infty=6\cdot10^{-5}$ in this inequality.
\end{theorem}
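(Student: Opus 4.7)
The plan is to establish a quantitative (linear-in-$|a-b|$) stability form of Ball's cube slicing inequality, where $b=(e_1+e_2)/\sqrt{2}$. My starting point is the integral representation
\[
\mb{E}\Big|\sum_{j=1}^n a_j \xi_j\Big|^{-1} = \frac{2}{\pi}\int_0^\infty \prod_{j=1}^n \frac{\sin(a_j t)}{a_j t}\, dt,
\]
obtained by combining the characteristic function $\mb{E}[e^{it\cdot\xi_j}] = \sin|t|/|t|$ of the uniform distribution on $\mb{S}^2$ with the spherical Fourier transform of $|\cdot|^{-1}$ in $\R^3$. Ball's inequality bounds this integral by $\sqrt{2}$ with equality only at $a=b$, so the task is to produce an additive correction of size $\kappa_\infty|a-b|$.

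The key observation underlying \emph{linear} (rather than quadratic) stability is that $b$ lies at a \emph{corner} of the admissible region $\{a_1\geq\cdots\geq a_n\geq 0,\ |a|=1\}$. Using the identity
$$\mb{E}\big[|\alpha\xi+Z|^{-1}\,\big|\,Z\big]=\max(\alpha,|Z|)^{-1}\quad(\alpha>0),$$
valid for $\xi$ uniform on $\mb{S}^2$ independent of $Z$ by a direct polar-coordinate integration, together with the explicit density $p_{|W|}(w)=w$ on $[0,\sqrt{2}]$ of $|W|=|\xi_1+\xi_2|/\sqrt{2}$, one computes the closed form
$$\mb{E}\Big|\tfrac{1}{\sqrt{2}}(\xi_1+\xi_2)+a_3\xi_3\Big|^{-1}=\sqrt{2}-\tfrac{a_3}{2}\quad\text{for }0\leq a_3\leq\sqrt{2}.$$
Thus activating the third coordinate (and rescaling to the unit sphere, which contributes only quadratic corrections) produces a genuinely linear deficit. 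For perturbations within the face $a_3=\cdots=a_n=0$, a parameterization $a=(\cos\theta,\sin\theta,0,\ldots,0)$ around $\theta=\pi/4$ similarly yields a linear deficit in $|a-b|$ by direct computation.

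The remaining challenge is to handle arbitrary perturbations, which I would do by following Ball's two-regime split. When $a_1\leq 1/\sqrt{2}$, I apply Hölder with exponents $p_j=a_j^{-2}\geq 2$ together with the scalar bound $\int_0^\infty|\sin u/u|^p du\leq (\pi/2)\sqrt{2/p}$, tight at $p=2$; quantifying the scalar-inequality deficit (of order $(a_j^2-1/2)^2$ near $a_j=1/\sqrt{2}$, and of constant size for $a_j$ small) together with the Hölder deficit (controlled by the $L^2$-discrepancy of the normalized integrands) produces a linear deficit in $|a-b|$ after accounting for $|a|=1$. When $a_1>1/\sqrt{2}$, where Hölder fails since $p_1<2$, I iterate the above max-identity conditionally on $\xi_1$ to reduce to a one-dimensional estimate that yields a linear deficit in $(a_1-1/\sqrt{2})$, itself comparable to $|a-b|$ via the unit-length constraint. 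The main obstacle is interpolating smoothly between the two regimes near $a_1=1/\sqrt{2}$ and tracking all constants uniformly in $n$—across the scalar Ball lemma, the Hölder deficit, and the one-dimensional estimate—so as to extract the explicit value $\kappa_\infty=6\cdot 10^{-5}$.
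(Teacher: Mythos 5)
Your Fourier representation, the Newton-shell identity $\E\big[|\alpha\xi+Z|^{-1}\,\big|\,Z\big]=\max(\alpha,|Z|)^{-1}$, and the closed form $\sqrt{2}-a_3/2$ for the unnormalized perturbation along $e_3$ are all correct; the last of these does identify the ``corner'' geometry as the source of linear (rather than quadratic) stability. The problem is that the argument you then propose for the bulk of the proof cannot detect this linear deficit, and so cannot work.

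Ball's H\"older argument begins by replacing $\int_0^\infty\prod_j\frac{\sin(a_jt)}{a_jt}\,\diff t$ with $\int_0^\infty\prod_j\big|\frac{\sin(a_jt)}{a_jt}\big|\,\diff t$; this step already discards the linear deficit. Concretely, along the direction $e_3$ with $a_3$ small, the true functional is $\sqrt{2}-\tfrac{a_3}{2}+O(a_3^2)$ by your own computation, but the quantity $\frac{2}{\pi}\int_0^\infty\prod_j\big|\frac{\sin(a_jt)}{a_jt}\big|\,\diff t$ and both deficits you propose to exploit on top of it are $\sqrt{2}-O(a_3^2)$: the Ball scalar deficit at $p_j=a_j^{-2}$ aggregates with weight $a_j^2$, and the H\"older (relative-entropy) deficit from the new factor carries the prefactor $1/p_3=a_3^2$. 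Every source of improvement in your outline is therefore quadratic in $|a-b|$. This is precisely why the paper does \emph{not} split by $a_1\lessgtr 1/\sqrt{2}$ near the extremizer: Section~5, Lemma~\ref{lm:ball-stab-near} instead handles $\delta(a)\le\tfrac14$ via the probabilistic self-improvement from \cite{CNT22}, conditioning on $\xi_1,\xi_2$ and reapplying Ball's inequality inductively to $\sum_{j\ge 3}a_j\xi_j$ -- an argument that keeps the gain linear, exactly as your $e_3$-computation predicts. H\"older and the function $\Psi$ appear only in the far regime $\delta(a)\ge\tfrac14$ (Lemma~\ref{lm:ball-stab-far1}), where the deficit is already of constant size and one only needs an effective scalar bound; the indispensable ingredient there is the K\"onig--Koldobsky estimate $\Psi(s)\le\sqrt{6/\pi}$ for $s\ge 9/4$, obtained via the Nazarov--Podkorytov lemma, which your proposal does not invoke and without which there is no numerical constant to extract. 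Finally, for $a_1>1/\sqrt{2}$ the paper transitions through Busemann's theorem for $a_1\le\tfrac{1}{\sqrt{2}}+\gamma_0$ and uses Ball's elementary projection bound $\vol(\QQ_n\cap a^\perp)\le 1/a_1$ for larger $a_1$; ``iterating the max-identity conditionally on $\xi_1$'' is not spelled out and would need to be replaced by something of this kind.
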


Unfortunately, a direct implementation of the arguments of \cite{DDS16,CNT22} does not yield explicit values for the constants $\kappa_1$ and $\kappa_\infty$ which are needed for our estimation of $p_0$ and $q_0$ in Theorems \ref{thm:sec} and \ref{thm:proj}. In Section \ref{sec:stab}, we shall present a new short proof of Theorem \ref{thm:dds} which is in the spirit of \cite{CNT22} and gives the numerical constant $\kappa_1=8\cdot10^{-5}$. Moreover, we will explain how to quantify an existential argument used in \cite{CNT22} in order to prove Theorem \ref{thm:cnt} with $\kappa_\infty=6\cdot10^{-5}$.


\subsection{A technical lemma} In this section we present the following key lemma, which is crucial for the induction argument sketched in Section \ref{sec:methods} to work.

\begin{lemma}\label{lm:a1a2}
Let $c \geq 1$ and $p > 4\sqrt{2}c$. If $0<a_2\leq a_1$ satisfy  $\|(a_1,a_2)\|_p \leq 2^{\frac1p- \frac12}$ and $|a_i-\frac{1}{\sqrt{2}}|\leq \frac{c}{p}$ for $i = 1,2$, then we have
\begin{equation}
	|a_1-a_2| \leq  3.65  \sqrt{\frac{c}{p-2}} \sqrt{1-a_1^2- a_2^2}. 
\end{equation}
\end{lemma}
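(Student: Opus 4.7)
The plan is to reformulate in symmetric coordinates $v_i := \sqrt{2}\, a_i$, so that the hypotheses read $|v_i - 1| \leq \sqrt{2}\,c/p$ and $v_1^p + v_2^p \leq 2$, while the goal becomes $(v_1 - v_2)^2 \leq \frac{13.3225\, c}{p-2}(2 - v_1^2 - v_2^2)$. The quantity on the right is nonnegative since $v_1^2 + v_2^2 \leq 2^{1 - 2/p}(v_1^p + v_2^p)^{2/p} \leq 2$ by the $\ell_p$--$\ell_2$ embedding for $p \geq 2$. The substantive case is $v_1 \geq 1 \geq v_2$; the cases in which both $v_i \geq 1$ or both $v_i \leq 1$ are easy (the former forces $v_1 = v_2 = 1$ via the $\ell_p$ constraint, while in the latter the $\ell_p$ constraint is automatic and the bound follows directly from the pointwise box bounds).

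Writing $v_1 = 1 + \alpha$, $v_2 = 1 - \beta$ with $\alpha, \beta \in [0, \sqrt{2}\,c/p]$, Taylor's theorem with Lagrange remainder applied to $v \mapsto v^p$ at $v = 1$ yields, for every $v > 0$ and $p \geq 2$,
\begin{equation*}
v^p \;\geq\; 1 + p(v - 1) + \frac{p(p-1)}{2}(v-1)^2 \min\{1, v^{p-2}\}.
\end{equation*}
Summing this across $v_1$ and $v_2$ and combining with $v_1^p + v_2^p \leq 2$ produces the key structural estimate
\begin{equation*}
\beta - \alpha \;\geq\; \frac{p-1}{2}\bigl[\alpha^2 + (1-\beta)^{p-2}\beta^2\bigr],
\end{equation*}
which, together with the exact identity $1 - a_1^2 - a_2^2 = (\beta - \alpha) - (\alpha^2 + \beta^2)/2$, yields
\begin{equation*}
1 - a_1^2 - a_2^2 \;\geq\; \frac{1}{2}\bigl[(p-2)\alpha^2 + ((p-1)(1-\beta)^{p-2} - 1)\beta^2\bigr].
\end{equation*}

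The task thus reduces to establishing the quantitative inequality
\begin{equation*}
(p-2)(\alpha + \beta)^2 \;\leq\; 13.3225\, c\bigl[(p-2)\alpha^2 + \bigl((p-1)(1-\beta)^{p-2} - 1\bigr)\beta^2\bigr],
\end{equation*}
which I would attack via a weighted AM--GM bound on the cross term in $(\alpha+\beta)^2 = \alpha^2 + 2\alpha\beta + \beta^2$, coupled with the inequality $(1-\beta)^{p-2} \geq (1 - \sqrt{2}\,c/p)^{p-2}$ coming from the pointwise hypothesis. The hard part will be the regime where $c$ is large and $p$ is only marginally above $4\sqrt{2}c$: in that range the factor $(p-1)(1-\sqrt{2}c/p)^{p-2}$ can be considerably smaller than $1$ and the $\ell_p$ constraint alone is insufficient, so the pointwise restrictions $\alpha, \beta \leq \sqrt{2}\,c/p$ must be exploited directly rather than through the $\ell_p$ bound. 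The relatively generous constant $3.65$---to be compared with the sharp asymptotic value $\sqrt{2}$ attained by the ratio $|a_1 - a_2|/\sqrt{1 - a_1^2 - a_2^2}$ as $p \to \infty$---is precisely what provides the slack needed to close the argument uniformly over $c \geq 1$ and $p > 4\sqrt{2}\,c$.
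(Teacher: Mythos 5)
Your setup and reduction are correct up to and including the key structural estimate: after rescaling $v_i=\sqrt{2}\,a_i$, writing $v_1=1+\alpha$, $v_2=1-\beta$, applying Taylor with Lagrange remainder to $v\mapsto v^p$ and feeding in $v_1^p+v_2^p\leq 2$, one indeed gets
\[
1-a_1^2-a_2^2 \;\geq\; \tfrac12\bigl[(p-2)\alpha^2+\bigl((p-1)(1-\beta)^{p-2}-1\bigr)\beta^2\bigr],
\]
and the remaining task would be to absorb $(p-2)(\alpha+\beta)^2$ into $13.3225\,c$ times the bracket. This last step, however, cannot be closed, and the reason is not merely technical: the intermediate lower bound is genuinely false as a route to the lemma. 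The Lagrange remainder for $v_2^p$ is $\tfrac{p(p-1)}{2}\beta^2\xi^{p-2}$ with $\xi\in(1-\beta,1)$, and replacing $\xi^{p-2}$ by its worst-case value $(1-\beta)^{p-2}$ discards almost all the curvature precisely in the regime you flag. Concretely, take $\alpha=0$, $c=1.77$, $p=11$, $\beta=\sqrt{2}c/p\approx 0.2275$ (all hypotheses of the lemma are met): then $(p-1)(1-\beta)^{p-2}-1\approx-0.02<0$, your lower bound on $1-a_1^2-a_2^2$ is $\approx -0.0005$ while the true value is $\approx 0.20$, and your reduced inequality reads $0.466\leq -0.024$, which is false even though the lemma's conclusion holds with room to spare. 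So the reduction leaks; no choice of weighted AM--GM on the cross term or appeal to the pointwise bounds can fix a target inequality whose right-hand side has become negative.

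The paper avoids exactly this loss. Instead of Taylor-expanding $v^p$ at $v=1$ and taking the infimum of the second derivative over the whole interval, it proves the tailored deficit estimate (Lemma~\ref{lm:p-means-deficit})
\[
\Bigl(\tfrac{b_1^r+b_2^r}{2}\Bigr)^{1/r}\;\geq\;\tfrac{b_1+b_2}{2}+(r-1)\,\frac{1-e^{-\sigma/2}}{4\sigma}\,|b_1-b_2|^2
\]
valid on the ratio window $1-\sigma/r\leq b_2/b_1\leq 1$, and applies it to $b_i=a_i^2$ with $r=p/2$, $\sigma=2\sqrt{2}c$. The crucial difference is that the deficit constant $(r-1)\tfrac{1-e^{-\sigma/2}}{4\sigma}$ is \emph{positive and uniform} over the entire window; its proof reduces, via a derivative comparison, to checking $(1-\delta/2)^{r-1}\leq 1-4c_r\delta$ at the two endpoints $\delta\in\{0,\sigma/r\}$ by convexity, which is exactly the step that keeps the bound from degenerating when $\beta$ is of order $\sigma/r$. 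With that lemma in hand one gets $1-a_1^2-a_2^2\geq \tfrac{c_0}{c}(p-2)|a_1-a_2|^2$ with explicit $c_0>0$, and the constant $3.65>1/\sqrt{c_0}$ falls out directly. If you want to pursue your own route, the fix is to replace $\min\{1,v^{p-2}\}$ by an averaged (rather than worst-case) bound on the remainder; in effect this is what the paper's lemma packages.
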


To prove it, we need an elementary inequality between $p$-means with a deficit.

\begin{lemma}\label{lm:p-means-deficit}
Let $\sigma > 0$, $r \geq \max\{\sigma,2\}$ and $b_1,b_2\in(0,1]$ with $1-  \frac{\sigma}{r} \leq \frac{b_2}{b_1} \leq 1$. Then, we have
\begin{equation} \label{eq:p-means-deficit}
	\left( \frac{b_1^r + b_2^r}{2} \right)^{\frac1r} \geq \frac{b_1+ b_2}{2} +(r-1) \frac{1-e^{-\frac{\sigma}{2}}}{4\sigma}|b_1-b_2|^2.
\end{equation}
\end{lemma}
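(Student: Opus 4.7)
The plan is to reduce the inequality, via scaling and a one-variable derivative estimate, to a monotonicity question for an explicit function on a closed interval.

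First I would normalize: assuming WLOG $b_1 \geq b_2$ and setting $t = b_2/b_1 \in [1-\sigma/r, 1]$, dividing both sides of the target by $b_1$ and observing that the quadratic right-hand side is monotone nondecreasing in $b_1 \in (0, 1]$ at fixed $t$, the problem reduces to showing
$$\phi(t) := \Bigl(\tfrac{1+t^r}{2}\Bigr)^{1/r} - \tfrac{1+t}{2} \geq c (1-t)^2,\qquad c := \tfrac{(r-1)(1-e^{-\sigma/2})}{4\sigma},$$
for $t \in [1-\sigma/r, 1]$. A direct computation yields $\phi(1) = \phi'(1) = 0$, so setting $h(t) := \phi(t) - c(1-t)^2$ we have $h(1) = h'(1) = 0$, and $h(t) \geq 0$ on the interval would follow from $h'(t) \leq 0$ there.

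Computing $\phi'(t) = \tfrac{1}{2}[t^{r-1}((1+t^r)/2)^{1/r-1} - 1]$ and using the identity $t^{r-1}((1+t^r)/2)^{1/r-1} = (2t^r/(1+t^r))^{(r-1)/r}$, the condition $h'(t) \leq 0$ is equivalent to
$$1 - \Bigl(\tfrac{2t^r}{1+t^r}\Bigr)^{(r-1)/r} \geq \tfrac{(r-1)(1-e^{-\sigma/2})(1-t)}{\sigma}.$$
I would then apply Bernoulli's inequality $(1-x)^\alpha \leq 1 - \alpha x$ with $\alpha = (r-1)/r$ and $x = (1-t^r)/(1+t^r)$ to reduce the task to
$$F(t) := \tfrac{1-t^r}{(1-t)(1+t^r)} \geq \tfrac{r(1-e^{-\sigma/2})}{\sigma}.$$

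A short derivative computation gives $F'(t)$ having the same sign on $(0,1)$ as $f(t) := \sum_{k=0}^{2r-1} t^k - 2rt^{r-1}$. The coefficient sequence of $f$ exhibits exactly two sign changes, so by Descartes' rule $f$ has at most two positive roots; since $f(0) = 1 > 0$, $f(1) = 0$, $f'(1) = r \neq 0$, and $f(t) \to +\infty$ as $t \to \infty$, there is exactly one further positive root $t^* \in (0,1)$. Consequently $F$ has a unique maximum at $t^*$ and its minimum on $[1-\sigma/r, 1]$ is attained at one of the two endpoints. At $t = 1$ we have $F(1) = r/2$ and the bound follows from the elementary inequality $\sigma \geq 2(1-e^{-\sigma/2})$. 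At $t = 1-\sigma/r$, letting $y := (1-\sigma/r)^r \leq e^{-\sigma}$, we compute $F(1-\sigma/r) \cdot \sigma/r = (1-y)/(1+y) \geq (1-e^{-\sigma})/(1+e^{-\sigma}) = \tanh(\sigma/2) \geq 1 - e^{-\sigma/2}$, the last step coming from the factorization $\tanh(\sigma/2) = (1-e^{-\sigma/2})(1+e^{-\sigma/2})/(1+e^{-\sigma})$.

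The hardest step will be the transition through Bernoulli. A naive pointwise second-derivative comparison $\phi''(v) \geq 2c$ on $[1-\sigma/r, 1]$ fails once $\sigma \gtrsim 2$, so one cannot establish $h'' \geq 0$ directly; the reduction to $h' \leq 0$ and then to monotonicity of $F$ is what circumvents this obstacle. The seemingly mysterious constant $(1-e^{-\sigma/2})/\sigma$ emerges naturally from the left endpoint of the interval, where the estimate becomes exactly $\tanh(\sigma/2) \geq 1 - e^{-\sigma/2}$, and is therefore essentially sharp in the regime where the lemma will be applied.
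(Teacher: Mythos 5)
Your overall strategy is sound and genuinely different from the paper's (the paper raises the normalized inequality to the $r$-th power, compares derivatives, majorizes the resulting ratio by $(1-\delta/2)^{r-1}$, and then checks a convex function of $\delta$ at the endpoints of $[0,\sigma/r]$; you instead stay at the $1/r$-th power, rewrite $\phi'$ via the identity $t^{r-1}((1+t^r)/2)^{1/r-1}=(2t^r/(1+t^r))^{(r-1)/r}$, apply Bernoulli, and study the shape of $F$). The normalization, the reduction to $h'\le 0$, the Bernoulli step, and both endpoint computations (including $\tanh(\sigma/2)\ge 1-e^{-\sigma/2}$ and $\sigma\ge 2(1-e^{-\sigma/2})$) are all correct.

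However, there is a genuine gap in the step where you identify the sign of $F'$ with that of $f(t)=\sum_{k=0}^{2r-1}t^k-2rt^{r-1}$ and invoke Descartes' rule of signs. This only makes sense when $r$ is a positive integer: the sum is otherwise undefined, and Descartes' rule applies to polynomials. In the lemma, $r\ge 2$ is an arbitrary real (in the application $r=p/2$ with $p$ ranging over all reals above $10^{15}$), so this is not a technicality you can wave away. The claim you need — that $F$ has a unique interior maximum on $(0,1)$, and hence its minimum on $[1-\sigma/r,1]$ is attained at an endpoint — is in fact true for all real $r\ge 2$, but it requires a different justification. One that works: writing $F'(t)$ as having the sign of $g(t):=1-t^{2r}-2rt^{r-1}(1-t)$, compute
\begin{equation*}
g'(t)=2rt^{r-2}\bigl(rt-t^{r+1}-(r-1)\bigr)=:2rt^{r-2}\,\psi(t),
\end{equation*}
and observe that $\psi(0)=-(r-1)<0$, $\psi(1)=0$, and $\psi'(t)=r-(r+1)t^r$ is positive then negative on $(0,1)$, forcing $\psi$ to be negative on $(0,t_1)$ and positive on $(t_1,1)$ for a unique $t_1$. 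Hence $g$ decreases then increases on $(0,1)$, and since $g(0)=1>0$ and $g(1)=0$, it changes sign exactly once, from $+$ to $-$. Substituting this argument for the Descartes step closes the gap.
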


\begin{proof}
Denote $c_r\eqdef(r-1) \frac{1-e^{-\frac{\sigma}{2}}}{4\sigma}$. Dividing both sides by $b_1$, introducing $\delta \eqdef 1- \frac{b_2}{b_1}$, raising the inequality to the power $r$ and using that $b_1\leq1$, we see that \eqref{eq:p-means-deficit} follows from
\[
	\frac{1+(1-\delta)^r}{2}  \geq  \left(1- \frac{\delta}{2} +c_r \delta^2 \right)^r, \qquad \delta \in \left[0,\frac{\sigma}{r}\right].
\]  
We have equality for $\delta=0$ and thus it is enough to show that on $[0,\frac{\sigma}{r}]$ the derivatives compare,
\[
	-\frac{r}{2}(1-\delta)^{r-1} \geq r\left(1- \frac{\delta}{2} +c_r \delta^2 \right)^{r-1}\left( -\frac12 +2c_r \delta \right).
\]
Multiplying both sides by $\frac2r$ and rearranging gives an equivalent form
\[
	1-4c_r \delta \geq \left( \frac{1-\delta}{1-\frac{\delta}{2} + c_r \delta^2} \right)^{r-1},
\]
since $1-\tfrac{\delta}{2}+c_r\delta^2>0$ on $\big[0,\tfrac{\sigma}{r}\big]$. To prove the last inequality, observe that
\[
\left( \frac{1-\delta}{1-\frac{\delta}{2} + c_r \delta^2} \right)^{r-1} \leq \left( \frac{1-\delta}{1-\frac{\delta}{2} } \right)^{r-1} \leq \left( 1-\frac{\delta}{2}  \right)^{r-1}.
\]
It is enough to check the inequality $\left( 1-\frac{\delta}{2}  \right)^{r-1} \leq 1-4c_r \delta$ only for $\delta \in \{0, \frac{\sigma}{r}\}$, since the left-hand side is convex in $\delta$. For $\delta = \frac{\sigma}{r}$ we have $( 1-\frac{\sigma}{2r})^{r-1} \leq e^{- \frac{\sigma}{2} \cdot \frac{r-1}{r}}$, so we would like to prove that 
\[
e^{- \frac{\sigma}{2} \cdot \frac{r-1}{r}} \leq 1- \frac{r-1}{r}(1-e^{-\frac{\sigma}{2}}).
\]
Since $u= \frac{r-1}{r} \in [0,1]$ we want to verify $e^{- \frac{\sigma}{2} u} \leq 1- u(1-e^{-\frac{\sigma}{2}})$, which follows by observing that the left-hand side is a convex function of $u$ and we have equality for $u \in \{0,1\}$. 
\end{proof}

\begin{proof}[Proof of Lemma \ref{lm:a1a2}]
Since $p>\sqrt{2}c$, we have
\[
	\frac{a_2}{a_1} \geq \frac{\frac{1}{\sqrt{2}}- \frac{c}{p}}{\frac{1}{\sqrt{2}}+ \frac{c}{p}} = \frac{1- \frac{\sqrt{2} c}{ p}}{1+ \frac{ \sqrt{2} c}{p}} \geq \left(1- \frac{ \sqrt{2} c}{ p} \right)^2 \geq 1 -  2\sqrt{2} \frac{c}{p},
\]
so $\frac{a_2^2}{a_1^2} \geq 1- 4\sqrt{2}\frac{c}{p} = 1- \frac{2\sqrt{2}c}{p/2}$.
We can apply Lemma \ref{lm:p-means-deficit} with $r=\tfrac{p}{2}$, $b_i=a_i^2$ and $\sigma=2\sqrt{2}c$ to get 
\[
	\frac12 \geq  \left( \frac{a_1^p + a_2^p}{2} \right)^{\frac2p} \geq \frac{a_1^2+ a_2^2}{2} + \left(\frac{p}{2}-1 \right) \frac{1-e^{-\sqrt{2}c}}{8\sqrt{2} c}|a_1^2-a_2^2|^2,
\] 
where the leftmost inequality is equivalent to $\|(a_1,a_2)\|_p \leq 2^{\frac1p- \frac12}$. By the assumptions, we also have $a_1+a_2 \geq \sqrt{2}-\frac{2c}{p} \geq \sqrt{2}-\frac{1}{2 \sqrt{2}}$ and $e^{-c \sqrt{2}}<e^{-\sqrt{2}}$. Therefore, rearranging gives
\[
	1-a_1^2-a_2^2 \geq \frac{c_0}{c} (p-2) |a_1-a_2|^2, \qquad c_0 = \frac{\left(\sqrt{2}- \frac{1}{2\sqrt{2}}\right)^2}{8 \sqrt{2}} (1- e^{-\sqrt{2}}).
\]
Thus, we conclude that
\[
	|a_1-a_2| \leq \frac{\sqrt{c}}{ \sqrt{c_0(p-2)}}  \sqrt{1-a_1^2- a_2^2}, \qquad \frac{1}{\sqrt{c_0}} < 3.65,
\]
which completes the proof.
\end{proof}


\section{Sections} \label{sec:proof-sec}


\subsection{Ancillary results}

We begin with a simple $L_2$-bound quantifying that the distribution of the random magnitudes $R_j$ from \eqref{eq:neg-mom} is close to the point mass at $1$ as $p$ gets large. Explicit computations using the density show that for every $s>-p-1$, the $s$-th moment of $R_1$ is
\begin{equation} \label{eq:R-mom}
\mb{E} R_1^s = \frac{\Gamma\big(1+\tfrac{s+1}{p}\big)}{\Gamma\big(1+\tfrac{1}{p}\big)}.
\end{equation} 

\begin{lemma}\label{lm:R-L2bound}
For $p > 5$, we have
\begin{equation} \label{eq:R-L2bound}
\E|R_1-1|^2 \leq \frac{2}{\Gamma(1+1/p)}p^{-2}.
\end{equation}
\end{lemma}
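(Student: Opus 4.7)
The plan is to reduce the $L_2$ estimate to a numerical second-difference inequality for the Gamma function. From \eqref{eq:R-mom} applied with $s=1$ and $s=2$, expansion of the square gives
\begin{equation*}
    \E|R_1-1|^2 = \E R_1^2 - 2\E R_1 + 1 = \frac{\phi(3h) - 2\phi(2h) + \phi(h)}{\phi(h)},
\end{equation*}
where I set $h := 1/p$ and $\phi(t) := \Gamma(1+t)$. The target inequality \eqref{eq:R-L2bound} is therefore equivalent to the second-difference bound $\phi(3h) - 2\phi(2h) + \phi(h) \leq 2h^2$.

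Next, I would rewrite this second difference as an iterated integral of $\phi''$ by two applications of the fundamental theorem of calculus:
\begin{equation*}
    \phi(3h) - 2\phi(2h) + \phi(h) = \int_0^h \int_0^h \phi''(h+s+t)\, ds\, dt.
\end{equation*}
Since the domain has total area $h^2$, the claim reduces to showing $\phi''(u) \leq 2$ for every $u$ in $[h, 3h]$. Because $p > 5$, this range is contained in the fixed interval $(0, 3/5)$.

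To control $\phi''$ uniformly on $[0, 3/5]$, I would use the Mellin-type representation
\begin{equation*}
    \phi''(u) = \int_0^\infty (\log x)^2 x^u e^{-x}\, dx,
\end{equation*}
which, by H\"older's inequality in the $u$ variable, shows that $\phi''$ is log-convex on $[0,\infty)$. Hence its maximum on $[0, 3/5]$ is attained at one of the two endpoints, so it remains only to check
\begin{equation*}
    \phi''(0) = \gamma^2 + \frac{\pi^2}{6} < 2 \qquad \text{and} \qquad \phi''\big(\tfrac{3}{5}\big) = \Gamma\big(\tfrac{8}{5}\big)\Big[\psi\big(\tfrac{8}{5}\big)^2 + \psi'\big(\tfrac{8}{5}\big)\Big] < 2,
\end{equation*}
both of which are direct numerical verifications.

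The only mildly delicate point is that $\phi''(0) = \gamma^2 + \pi^2/6 \approx 1.978$ is only narrowly below $2$; this explains why the constant $2$ in the statement is essentially sharp and rules out bounds obtained by cruder term-by-term estimation. Everything else is a routine chain of Taylor-type rewrites and one application of H\"older's inequality.
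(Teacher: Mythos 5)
Your proof is correct, and it takes a genuinely different route from the paper's. Both proofs begin identically---reducing via \eqref{eq:R-mom} to the second--difference inequality $\Gamma(1+3h)-2\Gamma(1+2h)+\Gamma(1+h)\leq 2h^2$ with $h=1/p$---but they diverge in how they control that second difference. The paper writes $h(x)=\Gamma(1+3x)-2\Gamma(1+2x)+\Gamma(1+x)$ and applies a Taylor expansion with Lagrange remainder, producing the coefficient combination $9\Gamma''(1+3\theta)-8\Gamma''(1+2\theta)+\Gamma''(1+\theta)$, and then needs two facts: that $\Gamma''$ is \emph{decreasing} on $(1,\frac85)$ (to merge the $9$ and $-8$ into a single $\Gamma''(1+2\theta)$ term, i.e.\ to kill the large coefficients coming from the chain rule) and that $\Gamma''<2$ there. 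You instead use the exact iterated--integral identity $\phi(3h)-2\phi(2h)+\phi(h)=\int_0^h\!\int_0^h\phi''(h+s+t)\,\mathrm{d}s\,\mathrm{d}t$, which sidesteps the chain rule entirely and reduces matters to a uniform bound $\phi''\leq 2$ on $[h,3h]$; you then obtain that from log--convexity of $\phi''$ (a one-line H\"older argument) together with endpoint checks at $0$ and $\frac35$. Your route gains by replacing the monotonicity of $\Gamma''$---which the paper asserts but does not justify, and which would require tracking the sign of $\Gamma'''$ on $(1,\frac85)$---with the cleaner and structurally automatic log-convexity. The paper's route is marginally shorter to state but leans on an unproved monotonicity claim; yours is more self-contained. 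Both correctly identify that the constant $2$ is governed by $\Gamma''(1)=\gamma^2+\pi^2/6\approx 1.978$.
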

\begin{proof}
By \eqref{eq:R-mom},  we can write
\begin{equation*}
\E|R_1-1|^2 = \E R_1^2 -2\E R_1 + 1 = \frac{\Gamma(1+3/p)-2\Gamma(1+2/p) + \Gamma(1+1/p)}{\Gamma(1+1/p)}.
\end{equation*}
The function
\[
h(x) \eqdef \Gamma(1+3x)-2\Gamma(1+2x)+\Gamma(1+x)
\]
satisfies $h(0) = h'(0) = 0$, so for every $0<x < \frac15$, by Taylor's expansion with Lagrange's remainder, there exists $0 < \theta < x$ such that
\begin{equation} \label{eq:taylor}
h(x) = \frac{1}{2}x^2h''(\theta) = \frac{1}{2}x^2(9\Gamma''(1+3\theta) - 8\Gamma''(1+2\theta) + \Gamma''(1+\theta)).
\end{equation}
\begin{lemma}\label{lm:Gamma''}
The function $\Gamma''$ is decreasing on $(0, \frac85)$.
\end{lemma}
Taking this for granted, $\Gamma''(1+3\theta) < \Gamma''(1+2\theta)$ and $\Gamma''(s) < \Gamma''(1) =\gamma^2 + \frac{\pi^2}{6} < 2$ for $s\in(1,\frac85)$ (as usual $\gamma = 0.577..$ is the Euler-Mascheroni constant, and this calculation of $\Gamma''(1)$ can be done with the aid of the polygamma function, see for instance 6.4.2 in the standard reference \cite{AS64}). Equation \eqref{eq:taylor} thus gives $h(x) \leq 2x^2$. This applied to $x = \frac1p$ leads to \eqref{eq:R-L2bound}.
\end{proof}

\begin{proof}[Proof of Lemma \ref{lm:Gamma''}]
Note that clearly $\Gamma^{(4)} > 0$, so $\Gamma^{(3)}$ increases, so for $s \in (0, \frac{8}{5})$, we have $\Gamma^{(3)}(s) < \Gamma^{(3)}(\frac85) = -0.33..$ (to obtain such numerical values, we refer again to \cite{AS64}). Therefore, $\Gamma''$ is decreasing on $(0, \frac{8}{5})$.
\end{proof}

To deal with hyperplanes far from the extremizer, we will crucially rely on the equi-continuity of the section functions at $p=\infty$ which we will now verify. For $p\in[1,\infty]$ we introduce the normalized section function,
\begin{equation}\label{eq:A-def}
A_{n,p}(a) \eqdef \frac{\vol(\BB_p^n \cap a^\perp)}{\vol(\BB_p^{n-1})},
\end{equation}
where $a$ is a unit vector in $\R^n$. Additionally, observe that
\[
A_{n,\infty}(a) =  \frac{\vol(\BB_\infty^n \cap a^\perp)}{\vol(\BB_\infty^{n-1})} = \vol\big(\QQ_n \cap a^\perp\big),
\]
where $\QQ_n = [-\frac{1}{2},\frac12]^n$ is the unit-volume cube in $\R^n$. Recall that from Proposition \ref{prop:sec-rep},
\[
A_{n,p}(a) = \Gamma\Big(1+\frac{1}{p}\Big)\ \mb{E} \Big| \sum_{j=1}^n a_j R_j\xi_j \Big|^{-1}.
\]

\begin{lemma}\label{lm:equicontinuity}
Let $p > 5$. For every unit vector $a$ in $\R^n$, we have
\begin{equation}
\big|A_{n,p}(a) - A_{n,\infty}(a)\big| \leq \frac5p.
\end{equation}
\end{lemma}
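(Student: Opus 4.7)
The plan is to combine the probabilistic formula from Proposition \ref{prop:sec-rep} with a Busemann-type factorization so that both $A_{n,p}(a)$ and $A_{n,\infty}(a)$ get expressed as reciprocals of a \emph{single} norm on $\R^n$, and then estimate the difference via Cauchy--Schwarz and the $L^2$ control of $R_j-1$ from Lemma~\ref{lm:R-L2bound}. Using the K\"onig--Kwapie\'n identity \eqref{eq:kk01} appearing in the proof of Proposition~\ref{prop:sec-rep}, for every $b\in\R^n$ one has
\[
\mathbb{E}_\xi\Big|\sum_j b_j\xi_j\Big|^{-1}=\lim_{s\downarrow-1}(1+s)\mathbb{E}\Big|\sum_j b_jU_j\Big|^{s}=2\,f_{\sum_j b_jU_j}(0),
\]
where $U_1,\ldots,U_n$ are i.i.d.\ uniform on $[-1,1]$. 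Busemann's theorem applied to the symmetric log-concave law of $(U_1,\ldots,U_n)$ shows that $\|b\|_\ast:=1/f_{\sum_j b_jU_j}(0)$ extends to a norm on $\R^n$ that is \emph{independent of $p$}, and
\[
A_{n,p}(a)=2\Gamma(1+\tfrac{1}{p})\,\mathbb{E}_R[X^{-1}],\qquad A_{n,\infty}(a)=2c^{-1},
\]
with $X:=\|(a_jR_j)_j\|_\ast$ and $c:=\|a\|_\ast$.

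Next I will use the two-sided bound $\sqrt{2}\|b\|_2\leq\|b\|_\ast\leq 2\|b\|_2$: the lower bound is Ball's inequality (Theorem~\ref{thm:cnt} with the deficit removed) applied to $b/\|b\|_2$, and the upper bound follows from the Jensen chain $\mathbb{E}|\sum_j b_j\xi_j|^{-1}\geq 1/\mathbb{E}|\sum_j b_j\xi_j|\geq 1/\|b\|_2$. Decomposing
\[
A_{n,p}(a)-A_{n,\infty}(a)=2\Gamma(1+\tfrac{1}{p})\,\mathbb{E}_R[X^{-1}-c^{-1}]+\frac{2(\Gamma(1+1/p)-1)}{c},
\]
the $\Gamma$-correction is at most $\sqrt{2}\,|\Gamma(1+1/p)-1|$ (since $c\geq\sqrt{2}$), which is $O(1/p)$ by Taylor expansion of $\Gamma$ at $1$. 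For the main summand, the triangle inequality for $\|\cdot\|_\ast$ combined with its upper bound yields $|X-c|\leq\|(a_j(R_j-1))_j\|_\ast\leq 2\sqrt{\sum_j a_j^2(R_j-1)^2}$, hence by Lemma~\ref{lm:R-L2bound},
\[
\mathbb{E}(X-c)^2\leq 4\,\mathbb{E}(R_1-1)^2\leq\frac{8}{p^2\,\Gamma(1+1/p)},
\]
and Cauchy--Schwarz gives
\[
\bigl|\mathbb{E}_R[X^{-1}-c^{-1}]\bigr|\leq\frac{1}{c}\sqrt{\mathbb{E}(X-c)^2\cdot\mathbb{E}[X^{-2}]}.
\]

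The main obstacle is controlling $\mathbb{E}[X^{-2}]$ independently of the dimension $n$, since na\"ive estimates via extremal coordinates of $a$ produce dimension-dependent factors. The resolution is the elementary identity $\bigl(\sum_j a_j^2\bigr)^2\leq \bigl(\sum_j a_j^2R_j^2\bigr)\bigl(\sum_j a_j^2R_j^{-2}\bigr)$ (Cauchy--Schwarz), which combined with the lower norm bound $X^2\geq 2\sum_j a_j^2R_j^2$ yields
\[
\mathbb{E}[X^{-2}]\leq\tfrac{1}{2}\,\mathbb{E}\!\Big[\sum_j a_j^2R_j^{-2}\Big]=\tfrac{1}{2}\,\mathbb{E}[R_1^{-2}]=\frac{\Gamma(1-1/p)}{2\,\Gamma(1+1/p)},
\]
a dimension-free finite constant whenever $p>1$. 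Assembling the estimates and performing the routine arithmetic for $p>5$ (using in particular that both $\Gamma(1-1/p)/\Gamma(1+1/p)^2$ and $p\,|\Gamma(1+1/p)-1|$ are bounded by explicit universal constants in this range) then yields $|A_{n,p}(a)-A_{n,\infty}(a)|\leq 5/p$.
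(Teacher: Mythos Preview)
Your proof is correct and follows essentially the same route as the paper's. Both arguments condition on the radii $R_j$ and use Busemann's theorem to realize the inner expectation as the reciprocal of a norm (your $\|\cdot\|_\ast$ is exactly $2\NN$ in the paper's notation), then control the difference via the triangle inequality for that norm together with Cauchy--Schwarz and Lemma~\ref{lm:R-L2bound}; the only cosmetic difference is that the paper bounds $\E\big(\sum_j a_j^2R_j^2\big)^{-1}$ by Jensen applied to the convex map $s\mapsto 1/s$, whereas you reach the same pointwise inequality $\big(\sum_j a_j^2R_j^2\big)^{-1}\le \sum_j a_j^2R_j^{-2}$ via Cauchy--Schwarz, and the final numerics coincide.
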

\begin{proof}
First recall that for an arbitrary nonzero vector $x$ in $\R^n$,
\[
\NN(x) \eqdef \frac{|x|}{\vol(\QQ_n \cap x^\perp)} = \Big( \mb{E} \Big| \sum_{j=1}^n x_j\xi_j \Big|^{-1}\Big)^{-1}
\]
is a norm by Busemann's theorem \cite{Bus49}. In particular, using $1 \leq \vol(\QQ_n \cap x^\perp) \leq \sqrt{2}$, we get
\begin{align*}
\Big|\NN(y)^{-1} - \NN(x)^{-1}\Big| &= \frac{|\NN(x)-\NN(y)|}{\NN(x)\NN(y)} \leq \frac{\NN(x-y)}{\NN(x)\NN(y)}\\
&= \frac{|x-y|}{|x||y|}\frac{\vol(\QQ_n \cap x^\perp)\vol(\QQ_n \cap y^\perp)}{\vol(\QQ_n \cap (x-y)^\perp)} \leq 2 \frac{|x-y|}{|x||y|},
\end{align*}
where $x,y\in\R^n\setminus \{0\}$. Evoking \eqref{eq:neg-mom}, we can write
\[
\frac{A_{n,p}(a)}{\Gamma(1+1/p)} = \E_R\E_\xi\Big|\sum_{j=1}^n a_jR_j\xi_j\Big|^{-1} = \E_R\NN(aR)^{-1},
\]
where we use the ad hoc notation $aR$ for the vector $(a_1R_1, \dots, a_nR_n)$ in $\R^n$. From the previous bound on $1/\NN$, we thus obtain
\begin{align*}
\left|\frac{A_{n,p}(a)}{\Gamma(1+1/p)} - A_{n,\infty}(a)\right| &=  \left|\E \NN(aR)^{-1} - \NN(a)^{-1}\right|\leq 2\E\frac{|a-aR|}{|a|\cdot |aR|} = 2\E\frac{|a-aR|}{|aR|}.
\end{align*}
By the Cauchy-Schwarz inequality, 
\[
\E\frac{|a-aR|}{|aR|} \leq \sqrt{\E|a-aR|^2}\sqrt{\E|aR|^{-2}} = \sqrt{\E\sum_{j=1}^n a_j^2(R_j-1)^2}\sqrt{\E\Big(\sum_{j=1}^n a_j^2R_j^2\Big)^{-1}}.
\]
The first factor in the right-hand side is equal to $\|R_1-1\|_2$. By the convexity of the function $s\mapsto \tfrac{1}{s}$, 
$$\E\Big(\sum_{j=1}^n a_j^2R_j^2\Big)^{-1} \leq \sum_{j=1}^n a_j^2 \E R_j^{-2} \stackrel{\eqref{eq:R-mom}}{=} \frac{\Gamma\big(1-\frac1p\big)}{\Gamma\big(1+\frac1p\big)}.$$ 
Combining all the above, yields
\begin{align*}
\big|A_{n,p}(a) - \Gamma(1+1/p)A_{n,\infty}(a)\big| &\leq 2\|R_1-1\|_2\sqrt{\Gamma(1-1/p)\Gamma(1+1/p)}.
\end{align*}
Using Lemma \ref{lm:R-L2bound}, the right-hand side gets upper-bounded by
\[
2\sqrt{\frac{2}{\Gamma(1+1/p)}p^{-2}}\sqrt{\Gamma(1-1/p)\Gamma(1+1/p)} < \frac{2\sqrt{2\Gamma(1/2)}}{p} = \frac{2\sqrt{2} \sqrt[4]{\pi}}{p} 
\]
using $p>2$. Consequently,
\[
\big|A_{n,p}(a) - A_{n,\infty}(a)\big| \leq \frac{2\sqrt{2} \sqrt[4]{\pi}}{p} + \big(1-\Gamma(1+1/p)\big)A_{n,\infty}(a) \leq \frac{2\sqrt{2} \sqrt[4]{\pi}}{p} + \frac{\sqrt{2}\gamma}{p} < \frac5p,
\]
because $1-\Gamma(1+x) < -\Gamma'(1) x = \gamma x$ for $0 < x < 1$, by convexity of $\Gamma$ on $(0,\infty)$. Here, $\gamma = 0.577..$ is the Euler--Mascheroni constant.
\end{proof}


\subsection{Proof of Theorem \ref{thm:sec}}
Following notation \eqref{eq:A-def}, our goal is to prove that for every $p \geq p_0$ and every unit vector $a$ in $\R^n$, we have
\begin{equation}\label{eq:goalA}
A_{n,p}(a) \leq A_{n,p}\left(\frac{e_1+e_2}{\sqrt{2}}\right),
\end{equation}
where the right-hand side is explicitly given by
\[
A_{n,p}\left(\frac{e_1+e_2}{\sqrt{2}}\right) = \Gamma\Big(1+\frac{1}{p}\Big)\ \E\left|\frac{R_1\xi_1+R_2\xi_2}{\sqrt{2}}\right|^{-1} = A_{2,p}\left(\frac{e_1+e_2}{\sqrt{2}}\right)= \frac{1}{\|(\frac{1}{\sqrt{2}},\frac{1}{\sqrt{2}})\|_p} = 2^{\frac12-\frac1p}.
\]
Our proof will proceed by induction on $n$.
It is directly checked that the theorem holds when $n=2$, as $A_{2,p}(a) = \|a\|_p^{-1}$ for every unit vector $a$ in $\R^2$.
We therefore assume that $n \geq 3$ and $a_1 \geq \dots \geq a_n > 0$. Our analysis will differ \mbox{depending on the distance of $a$ to the extremizer. Let}
\begin{equation}
\delta(a) \eqdef \left|a - \frac{e_1+e_2}{\sqrt{2}}\right|^2 = 2-\sqrt{2}(a_1+a_2).
\end{equation}

\subsubsection{The vector $a$ is far from the extremizer}

Suppose that $\sqrt{\delta(a)} \geq \frac{c}{p}$ with $c=10^5$. Then, by the equi-continuity proven in Lemma \ref{lm:equicontinuity} and the stability of Ball's inequality from Theorem \ref{thm:cnt} with constant $\kappa_\infty=6\cdot10^{-5}$, we obtain
\[
A_{n,p}(a) \leq \frac{5}{p} + A_{n,\infty}(a) \leq \frac{5}{p} + \sqrt{2} - \kappa_\infty\sqrt{\delta(a)} \leq \sqrt{2} - \frac{\kappa_\infty c-5}{p}.
\]
Since 
\[
c \geq \frac{\sqrt{2}\log 2 + 5}{\kappa_\infty} = \frac{\sqrt{2}\log 2 + 5}{6}10^5 = 0.996..\cdot 10^5,
\]
we have
\[
\sqrt{2} - \frac{\kappa_\infty c-5}{p} \leq \sqrt{2}\left(1 - \frac{\log 2}{p}\right) \leq \sqrt{2}e^{-\frac{\log 2}{p}} = 2^{\frac12-\frac{1}{p}},
\]
which finishes the proof in this case (without using the inductive hypothesis) for $c=10^5$. \hfill$\Box$

\subsubsection{The vector $a$ is close to the extremizer}
Now, suppose that
\begin{equation*}\label{eq:da}
\sqrt{\delta(a)} < \frac{c}{p},
\end{equation*}
where $c=10^5$. This in particular implies that (as we already assume that $a_2 \leq a_1$),
\begin{equation*}
\frac{1}{\sqrt{2}} - \frac{c}{p} \leq a_2 \leq a_1 \leq \frac{1}{\sqrt{2}} + \frac{c}{p} .
\end{equation*}
Let us also notice for further use that $a_2\leq \frac{1}{\sqrt{2}}$ since $2a_2^2\leq a_1^2+a_2^2 \leq1$.  We shall consider $p > Lc+2$ for a large numerical constant $L$.  With hindsight, we put $L=7.9\cdot10^9$.  Observe that our goal \eqref{eq:goalA} is equivalent to the inequality
\begin{equation} \label{eq:goalA'}
\E\Big|\sum_{j=1}^n a_jR_j\xi_j\Big|^{-1} \leq C_p
\end{equation}
with
\begin{equation} \label{eq:formCp}
C_p = \E\left|\frac{R_1\xi_1+R_2\xi_2}{\sqrt{2}}\right|^{-1} = \frac{2^{\frac12-\frac1p}}{\Gamma(1+1/p)},
\end{equation} 
which we will now prove by induction on $n$.  We record for future estimates that when $p>Lc+2$, we have
\begin{equation} \label{eq:estimate-Cp}
1.41 < C_p  < 1.42,
\end{equation}
since $2^{10^{-6}} > 2^{1/p}\Gamma(1+1/p) > \Gamma(1+10^{-6})$.

Consider the random vectors $X = a_1R_1\xi_1 + a_2R_2\xi_2$ and $Y = \sum_{j>2} a_jR_j\xi_j$ in $\R^3$. Since $X$ and $Y$ are independent and rotationally invariant, the representation
\[
\E\Big|\sum_{j=1}^n a_jR_j\xi_j\Big|^{-1} = \E\min\big\{|X|^{-1},|Y|^{-1}\big\}
\]
holds (see, e.g., \cite[Lemma~6.6]{CNT22}). By the inductive hypothesis,
\[
\E|Y|^{-1} = \frac{1}{\sqrt{1-a_1^2-a_2^2}}\E\left|\frac{\sum_{j>2}a_jR_j\xi_j}{\sqrt{1-a_1^2-a_2^2}}\right|^{-1} \leq \frac{C_p}{\sqrt{1-a_1^2-a_2^2}},
\]
and hence (by the concavity and monotonicity of the function $t \mapsto \min\{|X|^{-1},t\}$), we get
\begin{equation} \label{eq:use-ind-and-jen}
\E\Big|\sum_{j=1}^n a_jR_j\xi_j\Big|^{-1} = \E\min\big\{|X|^{-1},|Y|^{-1}\big\}\leq \E\min\big\{|X|^{-1},\alpha^{-1}\big\},
\end{equation}
where we set
\begin{equation} \label{eq:forma}
\alpha \eqdef \frac{1}{C_p}\sqrt{1-a_1^2-a_2^2}.
\end{equation}
Observe that
\begin{equation} \label{eq:form-min}
\E\min\left\{|X|^{-1},\alpha^{-1}\right\} = \E|X|^{-1} - \E\big(|X|^{-1}-\alpha^{-1}\big)_+
\end{equation}
and
\begin{equation} \label{eq:mom-X-1}
\begin{split}
\E|X|^{-1} &= \frac{1}{\sqrt{a_1^2+a_2^2}}\E\left|\frac{a_1R_1\xi_1+a_2R_2\xi_2}{\sqrt{a_1^2+a_2^2}}\right|^{-1} \\
&\stackrel{\eqref{eq:neg-mom}}{=} \frac{1}{\sqrt{a_1^2+a_2^2}}\frac{1}{\left\|\frac{(a_1,a_2)}{\sqrt{a_1^2+a_2^2}}\right\|_p\Gamma(1+1/p)} = \frac{1}{\|(a_1,a_2)\|_p\Gamma(1+1/p)}.
\end{split}
\end{equation}
In view of the inductive step \eqref{eq:use-ind-and-jen} and the identities \eqref{eq:formCp}, \eqref{eq:form-min}, \eqref{eq:mom-X-1}, the desired inequality \eqref{eq:goalA'} is a consequence of the following proposition.

\begin{proposition} \label{prop:main}
Under the assumptions and notation above, for $p\geq 10^{15}$ we have
\begin{equation}\label{eq:goalA''}
\E\big(|X|^{-1}-\alpha^{-1}\big)_+ \geq C_p\left(\frac{2^{\frac1p-\frac12}}{\|(a_1,a_2)\|_p}-1\right).
\end{equation}
\end{proposition}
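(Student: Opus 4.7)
The plan is to exploit the three-dimensional rotational invariance of $\xi_1,\xi_2$ to express the conditional expectation of $(|X|^{-1} - \alpha^{-1})_+$ given $R_1, R_2$ explicitly, and then to combine this with the concentration of $R_1, R_2$ near $1$ provided by Lemma \ref{lm:R-L2bound} to extract a lower bound of order $\alpha$ that dominates the right-hand side of \eqref{eq:goalA''}. First I would note that if $\|(a_1, a_2)\|_p > 2^{1/p - 1/2}$ then the right-hand side of \eqref{eq:goalA''} is nonpositive and there is nothing to prove; thus one may assume the reverse inequality, which activates Lemma \ref{lm:a1a2} with $c = 10^5$ to yield
\begin{equation*}
|a_1 - a_2| \leq 3.65 \sqrt{\tfrac{c}{p-2}}\sqrt{1 - a_1^2 - a_2^2} = 3.65\, C_p \sqrt{\tfrac{c}{p-2}}\, \alpha,
\end{equation*}
which is smaller than $10^{-4}\alpha$ for $p \geq 10^{15}$.

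The second step is the key conditional identity. Since $\xi_1, \xi_2$ are independent uniform on $\mb{S}^2$, the inner product $\langle \xi_1, \xi_2 \rangle$ is uniform on $[-1, 1]$, and hence conditional on $R_1 = r_1, R_2 = r_2$ the squared norm
\begin{equation*}
|X|^2 = (a_1 r_1)^2 + (a_2 r_2)^2 + 2 a_1 a_2 r_1 r_2 \langle \xi_1, \xi_2 \rangle
\end{equation*}
is uniformly distributed on $[(a_1 r_1 - a_2 r_2)^2, (a_1 r_1 + a_2 r_2)^2]$. A direct integration against this uniform density yields, whenever $|a_1 r_1 - a_2 r_2| < \alpha \leq a_1 r_1 + a_2 r_2$,
\begin{equation*}
F(r_1, r_2) \eqdef \E\big[(|X|^{-1} - \alpha^{-1})_+ \mid R_1 = r_1, R_2 = r_2\big] = \frac{(\alpha - |a_1 r_1 - a_2 r_2|)^2}{4 a_1 a_2 r_1 r_2 \alpha},
\end{equation*}
with simpler expressions ($\tfrac{1}{\max(a_1 r_1, a_2 r_2)} - \tfrac{1}{\alpha}$ or $0$) in the complementary two cases.

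To lower-bound $\E F(R_1, R_2)$, I would restrict to the good event $\Omega = \{|R_i - 1| \leq \tau,\ i = 1, 2\}$ for a threshold $\tau$ chosen small enough that $|a_1 R_1 - a_2 R_2| \leq |a_1 - a_2| + \sqrt{2}\tau \leq \alpha/2$ on $\Omega$ (so the quadratic formula applies with room to spare), yet large enough that $\P(\Omega^c) \leq 4/(\Gamma(1 + 1/p)\tau^2 p^2)$ is small via Chebyshev's inequality and Lemma \ref{lm:R-L2bound}. On $\Omega$ one then obtains the pointwise bound $F(R_1, R_2) \geq \alpha/(16 a_1 a_2 R_1 R_2)$, and integrating (using $a_1 a_2 \leq 1/2$ and $\E[1/(R_1 R_2)] = 1/\Gamma(1+1/p)^2 \to 1$) produces $\E F(R_1, R_2) \geq \alpha(1 - o(1))/16$.

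Finally, a Taylor expansion of $\|(a_1, a_2)\|_p^p$ around $(1/\sqrt{2}, 1/\sqrt{2})$ gives $2^{1/p - 1/2}/\|(a_1, a_2)\|_p - 1 \leq (1 + o(1))\delta(a)/2$, so the right-hand side of \eqref{eq:goalA''} is at most $(1 + o(1)) C_p \delta(a)/2$; since $\alpha^2 C_p^2 \geq (1 - o(1))\delta(a)$, the desired comparison reduces, after squaring, to $C_p^4 \delta(a) \leq 1$ up to $o(1)$ corrections, which in our regime $\delta(a) < (c/p)^2 = 10^{-20}$ is very comfortably satisfied. The main obstacle will be purely quantitative: translating the heuristic $o(1)$ estimates into rigorous numerical inequalities --- carefully controlling the Taylor remainder of $\|(a_1, a_2)\|_p^p$, the $L^2$-concentration error from Lemma \ref{lm:R-L2bound}, and the small deviations of $C_p$ from $\sqrt{2}$ and of $\Gamma(1 + 1/p)$ from $1$ --- is the bookkeeping task that ultimately dictates the threshold $p \geq 10^{15}$.
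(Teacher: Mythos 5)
Your overall architecture matches the paper's: dismiss the case where the right-hand side is nonpositive, invoke Lemma \ref{lm:a1a2} to make $|a_1-a_2|$ a small multiple of $\alpha$, reduce the right-hand side to a quantity of order $\alpha^2$, and then exploit the fact that $\langle \xi_1,\xi_2\rangle$ is uniform on $[-1,1]$ so that $|X|^2$ is conditionally uniform given $R_1,R_2$. The exact identity you write for $F(r_1,r_2)=\E\big[(|X|^{-1}-\alpha^{-1})_+\,\big|\,R_1=r_1,R_2=r_2\big]$ is correct and is a pleasant alternative to the paper's route, which instead lower-bounds the probability of the auxiliary event $\{|a_1\xi_1+a_2\xi_2|<\alpha/4\}$ rather than evaluating a conditional expectation in closed form.

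However, there is a genuine gap in the passage from the conditional identity to the lower bound $\E F(R_1,R_2)\gtrsim\alpha$. Your good event $\Omega=\{|R_i-1|\le\tau\}$ must satisfy $\sqrt{2}\tau+|a_1-a_2|\le\alpha/2$, which forces $\tau\lesssim\alpha$. On the other hand, Chebyshev with Lemma \ref{lm:R-L2bound} gives only $\mathbb{P}(\Omega^c)\le 4/(\Gamma(1+1/p)\tau^2 p^2)$, which is a nontrivial bound only when $\tau\gtrsim 1/p$. These two constraints are compatible only when $\alpha\gtrsim 1/p$. But $\alpha=C_p^{-1}\sqrt{1-a_1^2-a_2^2}$ has no such lower bound: the coordinates $a_3,\dots,a_n$ can be arbitrarily small, so $\alpha$ can be far below $1/p$. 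In that regime your restriction to $\Omega$ yields no information, and indeed the bound $\E F\gtrsim\alpha$ is simply false there: a back-of-the-envelope computation (using that the density of $R$ is of order $p$ near $1$) shows that for $\alpha\ll 1/p$ the true order of $\E(|X|^{-1}-\alpha^{-1})_+$ is $p\alpha^2$, not $\alpha$. The target $\tfrac{3}{2}\alpha^2$ is still reachable, but only with an estimate that degrades gracefully as $\alpha\downarrow 0$; an $L^2$ concentration bound cannot provide this. This is precisely what the paper's Lemma \ref{lm:unif-under} supplies: a \emph{pointwise} lower bound $g_p(x)\ge\tfrac{p}{4}\mathbf{1}_{[1-\frac{1}{2p},1]}(x)$ on the density of $R_1$, which produces a useful lower bound on $\mathbb{P}\{R_1\le 1,\ |R_1-R_2|<\alpha\}$ (namely $\ge p\alpha/32$) even when $\alpha\le\tfrac{1}{2p}$. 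That lemma, or something serving the same purpose, is the missing idea in your sketch, not a mere quantitative refinement of the Chebyshev step.
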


\begin{proof}
If the right-hand side is nonpositive, we are done. Otherwise,
\[
\|(a_1,a_2)\|_p < 2^{\frac1p-\frac12}.
\]
Since $\big|a_i-\tfrac{1}{\sqrt{2}}\big|<\tfrac{c}{p}$, Lemma \ref{lm:a1a2} gives
\begin{equation}\label{eq:a1-a2-alpha}
|a_1-a_2| \leq 3.65\sqrt{\frac{c}{p-2}}\sqrt{1-a_1^2-a_2^2} \stackrel{\eqref{eq:forma}}{=} 3.65\sqrt{\frac{c}{p-2}}C_p\alpha \stackrel{\eqref{eq:estimate-Cp}}{\leq} \frac{5.25\alpha}{\sqrt{L}}.
\end{equation}
To simplify, note that $\|(a_1,a_2)\|_p \geq 2^{1/p-1/2}\|(a_1,a_2)\|_2$, so
\begin{align*}
\frac{2^{\frac1p-\frac12}}{\|(a_1,a_2)\|_p}-1 \leq \frac{1}{\|(a_1,a_2)\|_2}-1=
\frac{1-(a_1^2+a_2^2)}{\sqrt{a_1^2+a_2^2}(1+\sqrt{a_1^2+a_2^2})} < \frac{C_p^2\alpha^2}{1.95},
\end{align*}
where we used that 
\begin{equation}\label{eq:a1a2squares}
a_1^2+a_2^2 \geq 2a_2^2 \geq 2\left(\frac{1}{\sqrt{2}}-\frac{c}{p}\right)^2 > 1-\frac{2\sqrt{2}c}{p} > 1 - \frac{2\sqrt{2}}{L} > 0.97
\end{equation}
and $\sqrt{u}(1+\sqrt{u}) > 1.95$ for $u > 0.97$. 
Since $\frac{C_p^3}{1.95} < \frac{1.42^3}{1.95} < \frac{3}{2}$, \eqref{eq:goalA''} will follow from
\begin{equation}\label{eq:goal}
\E\big(|X|^{-1}-\alpha^{-1}\big)_+ \geq \frac{3}{2}\alpha^2.
\end{equation}
Consider the event
\[
\mathcal{E} \eqdef \left\{R_1 \leq 1, \ |R_1-R_2| < \alpha, \ |a_1\xi_1+a_2\xi_2| < \frac14\alpha\right\}.
\]
On $\mathcal{E}$, we have
\begin{align*}
|X|  = |a_1R_1\xi_1 + a_2R_2\xi_2| &\leq |a_1R_1\xi_1 +a_2R_1\xi_2| + |a_2R_2\xi_2 - a_2R_1\xi_2|
\\ & = R_1|a_1\xi_1+a_2\xi_2| + a_2|R_2-R_1| < \frac{1}{4}\alpha + \frac{1}{\sqrt{2}}\alpha < \frac{24}{25}\alpha,
\end{align*}
so
\begin{equation} \label{eq:proba-esti}
 \E\big(|X|^{-1}-\alpha^{-1}\big)_+ \geq \frac{1}{24\alpha}\p{\mathcal{E}} = \frac{1}{24\alpha}\mb{P}\big\{R_1 \leq 1, \ |R_1-R_2| < \alpha\big\}\mb{P}\Big\{|a_1\xi_1+a_2\xi_2| < \frac14\alpha\Big\}.
\end{equation}
For the second probability in \eqref{eq:proba-esti}, observe that the random variable $|a_1\xi_1 + a_2\xi_2|^2$ has the same distribution as $a_1^2+a_2^2 + 2a_1a_2U$, with $U$ being uniform on $[-1,1]$. Therefore,
\[
\mb{P}\Big\{|a_1\xi_1+a_2\xi_2| < \frac14\alpha\Big\} = \mb{P}\Big\{U < \frac{\alpha^2/16-a_1^2-a_2^2}{2a_1a_2}\Big\}.
\]
Note that the condition
\begin{equation} \label{eq:condition}
-1 < \frac{\alpha^2/16-a_1^2-a_2^2}{2a_1a_2} < 1
\end{equation}
is equivalent to
\[
|a_1-a_2| < \frac{\alpha}{4} < a_1+a_2.
\]
The left inequality holds thanks to \eqref{eq:a1-a2-alpha}, provided that $L > (5.25\cdot 4)^2 = 441$, whereas the right inequality holds since $a_1+a_2 \geq 2a_2 > \sqrt{2} - \frac{2c}{p} > \sqrt{2} - \frac{2}{L} > 1.2$ which is greater than $\frac{\alpha}{4}$ since
\begin{equation}\label{eq:alpha-upbd}
\alpha \leq \frac{1}{C_p}\sqrt{1-2a_2^2} \stackrel{\eqref{eq:estimate-Cp}}{<} \frac{1}{1.41}\sqrt{1-2\left(\frac{1}{\sqrt{2}}-\frac{c}{p}\right)^2} < \frac{1}{1.41}\sqrt{2\sqrt{2}\frac{c}{p}} < \frac{1.2}{\sqrt{L}}.
\end{equation}
As \eqref{eq:condition} holds, we have
\[
\mb{P}\Big\{|a_1\xi_1+a_2\xi_2| < \frac14\alpha\Big\} = \frac12\left(\frac{\alpha^2/16-a_1^2-a_2^2}{2a_1a_2} + 1\right) = \frac{\alpha^2/16 - (a_1-a_2)^2}{4a_1a_2}.
\]
Using \eqref{eq:a1-a2-alpha} and the estimate $4a_1a_2 \leq 2(a_1^2+a_2^2) < 2$, we get
\begin{equation} \label{eq:proba1}
\mb{P}\Big\{|a_1\xi_1+a_2\xi_2| < \frac14\alpha\Big\} > \frac{1-441L^{-1}}{32}\alpha^2.
\end{equation}
For the other probability in \eqref{eq:proba-esti}, it is convenient to place a uniform function of constant mass under the density of $R_1$, which is doable due to the following technical lemma.
\phantom\qedhere
\end{proof}

\begin{lemma}\label{lm:unif-under}
Fix $p\in(1,\infty)$ and let $g_p:\R_+\to\R_+$ be the density of $R_1$. Then, we have
\begin{equation}
\forall \ x>0, \qquad {g}_p(x) \geq \frac{p}{4}\1_{[1-\frac{1}{2p},1]}(x).
\end{equation}
\end{lemma}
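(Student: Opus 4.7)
Recalling the definition of $R_1$, the density is
\[
g_p(x) = \frac{1}{\alpha_p} x^p e^{-x^p}\mathbf{1}_{x>0} = \frac{p}{\Gamma(1+1/p)}\, x^p e^{-x^p}\mathbf{1}_{x>0},
\]
so the claim is equivalent to showing that $x^p e^{-x^p} \geq \tfrac{1}{4}\Gamma(1+1/p)$ on $[1-\tfrac{1}{2p},1]$. The plan is to make the substitution $u=x^p$ and bound the three quantities $\Gamma(1+1/p)$, $u$, and $u e^{-u}$ separately on this interval.

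First I would observe that $\Gamma(1+1/p)\leq 1$ for every $p\geq 1$. Indeed $1+1/p\in(1,2]$, and on $[1,2]$ the Gamma function satisfies $\Gamma(1)=\Gamma(2)=1$ with a unique interior minimum, so $\Gamma\leq 1$ throughout. It thus suffices to establish $x^p e^{-x^p}\geq \tfrac{1}{4}$ on $[1-\tfrac{1}{2p},1]$.

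Next I would show that $u=x^p\in[\tfrac{1}{2},1]$ on this interval. The right endpoint is immediate; for the left endpoint, I would verify that the map $p\mapsto(1-\tfrac{1}{2p})^p$ is nondecreasing on $[1,\infty)$, which a quick logarithmic-derivative computation reduces to the pointwise inequality $\log(1-t)+\tfrac{t}{1-t}\geq 0$ for $t=\tfrac{1}{2p}\in(0,\tfrac{1}{2}]$; this in turn follows by comparing the Taylor series of the two terms. Since the value at $p=1$ equals $\tfrac{1}{2}$, we conclude $(1-\tfrac{1}{2p})^p\geq \tfrac{1}{2}$ for all $p\geq 1$, hence $u\geq \tfrac{1}{2}$.

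Finally, the function $u\mapsto ue^{-u}$ has derivative $(1-u)e^{-u}\geq 0$ on $[0,1]$ and is therefore nondecreasing there. Consequently, on $u\in[\tfrac{1}{2},1]$,
\[
u e^{-u} \geq \tfrac{1}{2}e^{-1/2} = \tfrac{1}{2\sqrt{e}} > \tfrac{1}{4},
\]
where the last numerical inequality is just $e<4$. Combining the three bounds gives $x^p e^{-x^p}\geq \tfrac{1}{2\sqrt{e}}>\tfrac{1}{4}\geq \tfrac{1}{4}\Gamma(1+1/p)$ on $[1-\tfrac{1}{2p},1]$, and multiplying by $p/\Gamma(1+1/p)$ yields the claim. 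No step looks genuinely hard; the only mildly delicate point is the monotonicity of $(1-\tfrac{1}{2p})^p$, but even that is routine.
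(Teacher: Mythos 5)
Your proof is correct, and it takes a slightly different route from the paper's. The paper observes that $g_p$ is log-concave, so it suffices to verify the bound at the two endpoints $x=1-\tfrac{1}{2p}$ and $x=1$, and then checks each numerically (using the same estimate $(1-\tfrac{1}{2p})^p \in (\tfrac12, e^{-1/2})$ that you use on the left). You instead substitute $u=x^p$, note that $u\mapsto ue^{-u}$ is \emph{increasing} on $[0,1]$, and pin down the image interval as $[\tfrac12,1]$, so only the left endpoint matters. In exchange for proving the monotonicity of $p\mapsto(1-\tfrac{1}{2p})^p$ (which the paper does not need — it only needs the one-sided bound $(1-\tfrac{1}{2p})^p>\tfrac12$, which also follows from e.g.\ Bernoulli, but your monotonicity argument is clean), your approach dispenses with log-concavity entirely and collapses the endpoint check to a single point. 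Both proofs are a few lines; yours is marginally more self-contained, while the paper's log-concavity shortcut is the more standard reflex in this context and may be preferable when the density is less explicit.
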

\begin{proof}
Recall from Proposition \ref{prop:sec-rep} that $g_p(x) = p\Gamma(1+1/p)^{-1} x^p e^{-x^p}$ for $x>0$. Since $g_p$ is log-concave, it suffices to check the inequality at the endpoints $x = 1-\frac{1}{2p}$ and $x = 1$. For the first endpoint, we have
\begin{align*}
{g}_p\big(1-\tfrac{1}{2p}\big) = \frac{p}{\Gamma(1+1/p)}\big(1-\tfrac{1}{2p}\big)^pe^{-(1-\frac{1}{2p})^p} > \frac{p}{2}e^{-e^{-1/2}} > \frac{p}{4},
\end{align*}
using that $(1-\frac{1}{2p})^p < e^{-1/2}$ and $(1-\frac{1}{2p})^p > \frac{1}{2}$. Moreover, for the second endpoint, 
\[
{g}_p(1) = \frac{p}{e\Gamma(1+1/p)} > \frac{p}{4}.\qedhere
\]
\end{proof}

\begin{proof} [Finishing the proof of Proposition \ref{prop:main}]
We estimate the first probability in \eqref{eq:proba-esti} using Lemma \ref{lm:unif-under},
\begin{equation} \label{eq:proba2}
\begin{split}
\mb{P}\big\{R_1 \leq 1, \ |R_1-R_2| <\alpha\big\} &\geq \int\!\!\!\!\int_{\{x \leq 1, \ |x-y| < \alpha\}}\left(\frac{p}{4}\right)^2\1_{[1-\frac{1}{2p},1]}(x)\1_{[1-\frac{1}{2p},1]}(y)\, \dd x\, \dd y \\
&= \begin{cases} \frac{1}{64}, & \mbox{if } \alpha > \frac{1}{2p} \\ \frac{p^2\alpha}{16}\left(\frac{1}{p}-\alpha\right), & \mbox{if } \alpha \leq \frac{1}{2p} \end{cases},
\end{split}
\end{equation}
where the equality is an elementary computation. In the case $\alpha\leq\tfrac{1}{2p}$, we further have $\frac{1}{p}-\alpha \geq \frac{1}{2p}$, so the probability is further bounded from below by $\frac{p\alpha}{32}$, which we will use.

\noindent $\bullet$ If $\alpha > \frac{1}{2p}$, inequalities \eqref{eq:proba-esti}, \eqref{eq:proba1} and \eqref{eq:proba2} yield the lower bound
\[
 \E(|X|^{-1}-\alpha^{-1})_+ \geq \frac{1}{24\alpha}\cdot\frac{1}{64}\cdot\frac{1-441L^{-1}}{32}\alpha^2 = \left(\frac{1-441L^{-1}}{2^{14}\cdot 3}\cdot\frac{1}{\alpha}\right)\alpha^2 \stackrel{\eqref{eq:alpha-upbd}}{>} \left(\frac{1-441L^{-1}}{2^{14}\cdot 3\cdot 1.2}\sqrt{L}\right)\alpha^2.
\]
Since $L=7.9\cdot10^9$, this gives the desired bound \eqref{eq:goal} by $\frac{3}{2}\alpha^2$. 

\noindent $\bullet$ If $\alpha \leq \frac{1}{2p}$, inequalities \eqref{eq:proba-esti}, \eqref{eq:proba1} and \eqref{eq:proba2} yield the lower bound
\[
 \E(|X|^{-1}-\alpha^{-1})_+ \geq \frac{1}{24\alpha}\cdot \frac{p\alpha}{32}\cdot\frac{1-441L^{-1}}{32}\alpha^2 = \frac{p(1-441L^{-1})}{2^{13}\cdot 3}\alpha^2 > \frac{(L-441)c}{2^{13}\cdot 3}\alpha^2.
\]
This is at least $\frac{3}{2}\alpha^2$ for the  chosen $L$, which completes the proof of \eqref{eq:goal} for $p\geq p_0$, where
\[
p_0 = Lc + 2 < 8\cdot 10^9\cdot 10^5 < 10^{15}.\qedhere
\]
\end{proof}


\section{Projections}\label{sec:proof-proj}

The proof here parallels the one from Section \ref{sec:proof-sec}. For the readers' convenience, we include all the details (which are in fact easier in several places).

\subsection{Ancillary results}

We start by quantifying how close the distribution of the $X_j$ from \eqref{eq:bar-nao} is to that of a Rademacher variable (in the Wasserstein-2 distance). Explicit computations using the density show that for every $s>-\tfrac{1}{q-1}$, the $s$-th moment of $|X_1|$ is
\begin{equation} \label{eq:X-mome}
\mb{E}|X_1|^s = \frac{\Gamma\big(1+\frac{(s-1)(q-1)}{q}\big)}{\Gamma\big(\frac{1}{q}\big)}.
\end{equation}

\begin{lemma}\label{lm:coupling}
For $1 < q < 2$, we have
\begin{equation}
\E|X_1-\sgn(X_1)|^2 \leq 9\Big(1-\frac1q\Big)^2.
\end{equation}
\end{lemma}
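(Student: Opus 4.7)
The plan is to mimic the proof of Lemma~\ref{lm:R-L2bound}. Expanding the square and applying the moment formula~\eqref{eq:X-mome} with $s=0,1,2$, one gets
\[
\E|X_1-\sgn(X_1)|^2 = \E X_1^2 - 2\E|X_1| + 1 = \frac{\Gamma(2-1/q) - 2 + \Gamma(1/q)}{\Gamma(1/q)}.
\]
Setting $y \eqdef 1 - \tfrac{1}{q} \in \bigl(0,\tfrac12\bigr)$ so that $2-1/q = 1+y$ and $1/q = 1-y$, the target inequality becomes
\[
g(y) \leq 9\,y^2\,\Gamma(1-y), \qquad g(y) \eqdef \Gamma(1+y) + \Gamma(1-y) - 2.
\]

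I would then proceed by Taylor expansion exactly as in Lemma~\ref{lm:R-L2bound}. Directly, $g(0) = 0$ and $g'(0) = \Gamma'(1) - \Gamma'(1) = 0$, so Taylor's theorem with Lagrange remainder yields some $\theta \in (0, y) \subset (0, \tfrac12)$ with
\[
g(y) = \tfrac{y^2}{2}\bigl[\Gamma''(1+\theta) + \Gamma''(1-\theta)\bigr].
\]
Since $\Gamma \geq 1$ on $[\tfrac12, 1]$ (so $\Gamma(1-y) \geq 1$ throughout our range of $y$), it suffices to verify the pointwise bound $\Gamma''(1+\theta) + \Gamma''(1-\theta) \leq 18$ on $[0, \tfrac12]$.

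The remaining step is routine bookkeeping on $\Gamma''$. Because $\Gamma''$ is decreasing on $[1, \tfrac32]$, one has $\Gamma''(1+\theta) \leq \Gamma''(1) = \gamma^2 + \pi^2/6 < 1.98$, while on $[\tfrac12, 1]$ the maximum of $\Gamma''$ is attained at the left endpoint and equals $\Gamma''(\tfrac12) = \sqrt{\pi}\bigl((\gamma + 2\log 2)^2 + \pi^2/2\bigr) < 16$. Summing yields $g''(\theta) < 18$ with ample slack: the natural leading Taylor coefficient $\Gamma''(1) \approx 1.98$ is already almost a factor of $5$ smaller than the allotted constant~$9$. The only slightly delicate point is the numerical estimate on $\Gamma''(1-\theta)$ as $\theta \to \tfrac12^-$, where $\Gamma''$ grows appreciably; this is precisely what forces the rather generous multiplicative constant $9$ in the statement, and the growth is easily absorbed by any crude monotonicity-based bound on $\Gamma''$ on $[\tfrac12, 1]$.
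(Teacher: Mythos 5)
Your proof is correct and follows essentially the same route as the paper: expand the square via the moment formula, reduce to the symmetric quantity $\Gamma(1+y)+\Gamma(1-y)-2$, apply Taylor with Lagrange remainder at $y=0$, and bound the remainder using the monotonicity of $\Gamma''$ on $(\tfrac12,\tfrac32)$ together with the crude bound $\Gamma(1/q)\geq 1$. The only cosmetic difference is that you sum the explicit endpoint values $\Gamma''(1)<1.98$ and $\Gamma''(\tfrac12)<16$ to get $<18$, which is exactly what the paper's single-line bound $\tfrac12(\Gamma''(\tfrac12)+\Gamma''(1))<9$ does.
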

\begin{proof}
Observe that
\[
\E|X_1-\sgn(X_1)|^2 = \E X_1^2 - 2\E|X_1| + 1 \stackrel{\eqref{eq:X-mome}}{=} \frac{\Gamma(2-1/q) - 2 + \Gamma(1/q)}{\Gamma(1/q)}.
\]
Since $\Gamma$ is decreasing on $(0,1)$, $\Gamma(1/q) \geq \Gamma(1) = 1$. Using Taylor's expansion with Lagrange's remainder, for every $0 < x  < 1$ there exists $0<\theta<x$ such that
\[
h(x) \eqdef \Gamma(1-x) + \Gamma(1+x) -2 = \frac12x^2\big(\Gamma''(1-\theta)+\Gamma''(1+\theta)\big).
\]
Thus for $0 < x < 1/2$, we have
\[
h(x) \leq \frac12x^2\big(\|\Gamma''\|_{L_\infty(\frac12,1)} + \|\Gamma''\|_{L_\infty(1,\frac32)}\big) = \frac{1}{2}x^2\big(\Gamma''(1/2) + \Gamma''(1)\big) < 9x^2
\]
since $\Gamma''$ decreases on $(\frac12, \frac32)$, by Lemma \ref{lm:Gamma''}. Applying this to $x=1-\tfrac{1}{q}$, we indeed obtain
\[
\E|X_1-\sgn(X_1)|^2 \leq h\Big(1-\frac1q\Big) \leq 9\Big(1-\frac1q\Big)^2. \qedhere
\]
\end{proof}

From this estimate, we can easily deduce the equi-continuity of the normalized projection functions, which we state directly in probabilistic terms in view of Proposition \ref{prop:proj-rep}.

\begin{lemma}\label{lm:equicontinuity-proj}
Let $1 < q  < 2$, $X_1, X_2, \dots$ be i.i.d.~random variables from \eqref{eq:bar-nao} and $\e_1, \e_2, \dots$ be i.i.d.~Rademacher random variables. For every unit vector $a$ in $\R^n$, we have
\begin{equation}
\Bigg|\E\Big|\sum_{j=1}^n a_jX_j\Big| - \E\Big|\sum_{j=1}^n a_j\e_j\Big|\Bigg| \leq 3\Big(1-\frac1q\Big).
\end{equation}
\end{lemma}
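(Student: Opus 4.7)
The plan is to exploit the obvious pointwise coupling between $X_j$ and a Rademacher variable built from its sign. Since the density $\gamma_q^{-1}|x|^{(2-q)/(q-1)}e^{-|x|^{q/(q-1)}}$ is an even function of $x$, each $X_j$ is symmetric, so $\sgn(X_j)$ is uniform on $\{-1,+1\}$; moreover, independence of the $X_j$ passes to independence of $\big(\sgn(X_j)\big)_{j \geq 1}$. Because the inequality to be proven involves $(X_j)$ and $(\e_j)$ only through the distributions of two sums, I may assume without loss of generality that $\e_j = \sgn(X_j)$.

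With this coupling in hand, the triangle inequality for $\E|\cdot|$ yields
\[
\Bigg|\E\Big|\sum_{j=1}^n a_jX_j\Big| - \E\Big|\sum_{j=1}^n a_j\e_j\Big|\Bigg| \leq \E\Big|\sum_{j=1}^n a_j(X_j-\e_j)\Big|,
\]
and Cauchy--Schwarz reduces the problem to bounding the second moment of the right-hand sum. The summands $a_j(X_j - \e_j)$ are independent, as each is a function of $X_j$ alone; and each is mean zero since $X_j$ is symmetric and $\e_j = \sgn(X_j)$. Consequently the cross terms in the square vanish, and using that $a$ is a unit vector and the $X_j$ are i.i.d.\ one obtains
\[
\E\Big(\sum_{j=1}^n a_j(X_j-\e_j)\Big)^2 = \sum_{j=1}^n a_j^2\,\E(X_j-\e_j)^2 = \E|X_1-\sgn(X_1)|^2.
\]

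It then remains to invoke Lemma \ref{lm:coupling}, which bounds the last expectation by $9(1-1/q)^2$; taking square roots produces the advertised constant $3(1-1/q)$. There is no substantive obstacle in this argument: once the sign-coupling is in place, the entire proof is a variance-type estimate driven by independence and symmetry. The only point requiring care is observing that the density of $X_j$ is even so that $\sgn(X_j)$ really is a Rademacher variable, making Lemma \ref{lm:coupling} directly applicable.
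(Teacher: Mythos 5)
Your proof is correct and follows essentially the same route as the paper's: couple $\e_j=\sgn(X_j)$, apply the triangle inequality and Cauchy--Schwarz, expand the second moment using independence and the zero-mean property of $X_j-\sgn(X_j)$ together with $\sum a_j^2=1$, and finish with Lemma~\ref{lm:coupling}.
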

\begin{proof}
Since $\e_j$ has the same distribution as $\sgn(X_j)$, we have
\begin{equation*}
\begin{split}
\Bigg|\E\Big|  \sum_{j=1}^n a_j & X_j\Big| - \E\Big|\sum_{j=1}^n a_j\e_j\Big|\Bigg| = \Bigg|\E\Big|\sum_{j=1}^n a_jX_j\Big| - \E\Big|\sum_{j=1}^n a_j\sgn(X_j)\Big|\Bigg| \\
&\leq \E\Big|\sum_{j=1}^n a_j\big(X_j - \sgn(X_j)\big)\Big| \leq \sqrt{\E\Big|\sum_{j=1}^n a_j\big(X_j - \sgn(X_j)\big)\Big|^2} = \sqrt{\E|X_1-\sgn(X_1)|^2}
\end{split}
\end{equation*}
and Lemma \ref{lm:coupling} finishes the proof.
\end{proof}


\subsection{Proof of Theorem \ref{thm:proj}}

By virtue of \eqref{eq:bar-nao}, our goal is to show that for every $1 < q < q_0$ and every unit vector $a$ in $\R^n$, we have
\begin{equation}\label{eq:proj-goal}
\E\Big|\sum_{j=1}^n a_jX_j \Big| \geq \mb{E} \Big| \frac{X_1+X_2}{\sqrt{2}}\Big|\eqdef c_q.
\end{equation}
For later use, we note that thanks to \eqref{eq:bar-nao}, for every vector $a$ in $\R^2$,
\begin{equation}\label{eq:proj-n2}
\begin{split}
\E|a_1X_1+a_2X_2| = |a|\frac{\vol(\proj_{a^\perp}\BB_q^2)}{2\Gamma(1/q)} &= \frac{|a|}{\Gamma(1/q)}\sup_{x \in \partial \BB_q^2}\left|\scal{x}{\frac{1}{|a|}(-a_2,a_1)}\right| = \frac{\|a\|_{\frac{q}{q-1}}}{\Gamma(1/q)}.
\end{split}
\end{equation}
In particular, we have
\begin{equation} \label{eq:form-cq}
c_q =  \mb{E} \Big| \frac{X_1+X_2}{\sqrt{2}}\Big| = \frac{2^{\frac12-\frac1q}}{\Gamma(1/q)}.
\end{equation}
In view of the above explicit expression, inequality \eqref{eq:proj-goal} clearly holds for $n=2$. We therefore assume that $n \geq 3$, $a_1 \geq \dots \geq a_n > 0$ and proceed by induction on $n$. Recall the definition of the deficit parameter used earlier,
\[
\delta(a) = \left|a - \frac{e_1+e_2}{\sqrt{2}}\right|^2 = 2-\sqrt{2}(a_1+a_2).
\]

\subsubsection{The vector $a$ is far from the extremizer}

Here we consider the case \mbox{$\sqrt{\delta(a)} \geq \big(1-\tfrac1q\big)c$} for a numerical constant $c > 0$.   With hindsight, we set
$$c=\frac{5-\sqrt{2}}{8}\cdot10^5.$$
Using the equi-continuity from Lemma \ref{lm:equicontinuity-proj} and the robust version of Szarek's inequality from Theorem \ref{thm:dds}, we obtain
\[
\E\Big|\sum_{j=1}^n a_jX_j\Big| \geq \E\Big|\sum_{j=1}^n a_j\e_j\Big| - 3\Big(1-\frac1q\Big) \geq \frac{1}{\sqrt{2}} + \kappa_1\sqrt{\delta(a)} - 3\Big(1-\frac1q\Big) \geq \frac{1}{\sqrt2} + (\kappa_1c - 3)\Big(1-\frac1q\Big).
\]
Note that by convexity, $2^x < 1 + 2(\sqrt{2}-1)x$ for $0 < x  < \frac12$, which with $x =  1-\frac1q$ gives
\[
c_q = \frac{2^{\frac12-\frac1q}}{\Gamma(1/q)} \leq 2^{\frac12-\frac1q} = \frac{1}{\sqrt{2}}2^{1-\frac1q} < \frac{1}{\sqrt{2}} + (2-\sqrt{2})\Big(1-\frac1q\Big).
\]
Since
\[
c = \frac{5-\sqrt{2}}{\kappa_1} = \frac{5-\sqrt{2}}{8}\cdot 10^5,
\]
we get the desired bound \eqref{eq:proj-goal} (nota bene, without the inductive argument). \hfill$\Box$

\subsubsection{The vector $a$ is close to the extremizer}
It is left to consider the case when
\begin{equation*}\label{eq:da-proj}
\sqrt{\delta(a)} < \Big(1-\frac1q\Big)c,
\end{equation*}
where $c= \frac{5-\sqrt{2}}{8} \cdot 10^5$, as in the previous case. In particular, we also have that
\begin{equation}\label{eq:a-proj}
\frac{1}{\sqrt{2}} - c\Big(1-\frac1q\Big) \leq a_2 \leq a_1 \leq \frac{1}{\sqrt{2}} + c\Big(1-\frac1q\Big).
\end{equation}
Letting $p=\tfrac{q}{q-1}$, we shall assume that $p$ is large relative to $c$, say $p>Lc+2$ with a positive numerical constant $L$, with hindsight set to be
$$L=8 \ 294\ 400. $$ 
In particular, when $\frac1p = 1 - \frac1q < 10^{-5}$,
\begin{equation} \label{eq:estim-cq}
0.7 < c_q< 0.71,
\end{equation}
since $0.7 < \frac{2^{-1/2}}{\Gamma(1-10^{-5})} < \frac{2^{1/2-1/q}}{\Gamma(1/q)} < 2^{-1/2+10^{-5}} < 0.71$.

To run an inductive argument in order to prove \eqref{eq:proj-goal}, we consider the random variables \mbox{$X = a_1X_1 + a_2X_2$} and $Y = \sum_{j>2} a_jX_j$. By the independence and symmetry of $X$ and $Y$,
\[
\E\Big|\sum_{j=1}^n a_jX_j\Big| = \E|X+Y| = \E\max\{|X|,|Y|\}.
\]
Using the inductive hypothesis,
\begin{equation*}
\E|Y| = \sqrt{1-a_1^2-a_2^2}\E\left|\frac{\sum_{j>2}a_jX_j}{\sqrt{1-a_1^2-a_2^2}}\right| \geq c_q\sqrt{1-a_1^2-a_2^2},
\end{equation*}
hence (by the convexity of the nondecreasing function $t \mapsto \max\{|X|,t\}$), we get
\begin{equation} \label{eq:ind-proj}
\E\Big|\sum_{j=1}^n a_jX_j\Big| = \E\max\left\{|X|,|Y|\right\} \geq \E\max\left\{|X|,\alpha\right\},
\end{equation}
where we set
\begin{equation} \label{eq:a-proje}
\alpha \eqdef c_q\sqrt{1-a_1^2-a_2^2}.
\end{equation}
Observe that
\begin{equation} \label{eq:form-max}
\E\max\left\{|X|,\alpha\right\} = \E|X| + \E(\alpha-|X|)_+
\end{equation}
and, by \eqref{eq:proj-n2},
\begin{align} \label{eq:form-|X|}
\E|X| = \frac{\|(a_1,a_2)\|_{\frac{q}{q-1}}}{\Gamma(1/q)} \stackrel{\eqref{eq:form-cq}}{=} c_q2^{\frac1q-\frac12}\|(a_1,a_2)\|_{\frac{q}{q-1}}.
\end{align}
In view of the inductive step \eqref{eq:ind-proj} and the identities \eqref{eq:form-max} and \eqref{eq:form-|X|}, the desired inequality \eqref{eq:proj-goal} is a consequence of the following proposition.

\begin{proposition} \label{prop:main-proj}
Under the assumptions and notation above, for $1\leq q \leq1+10^{-12}$ we have
\begin{equation}\label{eq:aX-proj}
\E\big(\alpha-|X|\big)_+ \geq \Big(1-2^{\frac1q-\frac12}\|(a_1,a_2)\|_{\frac{q}{q-1}}\Big)c_q.
\end{equation}
\end{proposition}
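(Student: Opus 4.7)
The plan is to mirror the argument of Proposition \ref{prop:main} in the projection setting. First, if the right-hand side of \eqref{eq:aX-proj} is nonpositive, there is nothing to prove; otherwise, setting $p=q/(q-1)$ so that $1-1/q=1/p$, the identity $2^{1/q-1/2}=2^{1/2-1/p}$ converts the nontriviality condition into $\|(a_1,a_2)\|_p<2^{1/p-1/2}$. The hypothesis $|a_i-1/\sqrt{2}|\leq c(1-1/q)=c/p$ then matches the assumptions of Lemma \ref{lm:a1a2}, which delivers $|a_1-a_2|\leq 3.65\sqrt{c/(p-2)}\sqrt{1-a_1^2-a_2^2}$. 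Combined with $c_q>0.7$ from \eqref{eq:estim-cq} and the hypothesis $p>Lc+2$ for a large parameter $L$, this yields $|a_1-a_2|\leq (\mathrm{const})\,\alpha/\sqrt{L}$, where $\alpha=c_q\sqrt{1-a_1^2-a_2^2}$. Using the norm comparison $\|(a_1,a_2)\|_p\geq 2^{1/p-1/2}\|(a_1,a_2)\|_2$ together with $a_1^2+a_2^2>0.97$ (shown exactly as in \eqref{eq:a1a2squares}), the right-hand side of \eqref{eq:aX-proj} is bounded above by a universal multiple of $\alpha^2$, so it suffices to prove a lower bound of the form $\E(\alpha-|X|)_+\geq C\alpha^2$ for a suitable $C>0$.

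Next, I would exploit the sign--magnitude decomposition $X_j=\e_j R_j'$, where $\e_j=\sgn(X_j)$ is Rademacher and $R_j'=|X_j|$ is independent of $\e_j$. On the event $\e_1\e_2=-1$ one has $|X|=|a_1R_1'-a_2R_2'|$, and under the additional conditions $R_1'\leq 1$ and $|R_1'-R_2'|<\alpha$ the triangle inequality gives
\[
|X|\leq |a_1-a_2|\,R_1'+a_2|R_1'-R_2'|\leq \bigl((\mathrm{const})/\sqrt{L}+1/\sqrt{2}+1/L\bigr)\alpha,
\]
so for $L$ large enough $|X|\leq c_0\alpha$ with some $c_0<1$. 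Consequently,
\[
\E(\alpha-|X|)_+\geq (1-c_0)\alpha\cdot \mb{P}(\mathcal{E}),\qquad\mathcal{E}\eqdef\{\e_1\e_2=-1,\ R_1'\leq 1,\ |R_1'-R_2'|<\alpha\},
\]
and it remains to show $\mb{P}(\mathcal{E})\geq (\mathrm{const})\,\alpha$.

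This final step requires a projection analogue of Lemma \ref{lm:unif-under}. The density of $|X_1|$ on $(0,\infty)$ is
\[
h_q(x)=\frac{p-1}{\Gamma(2-1/p)}\,x^{p-2}e^{-x^p},
\]
which is log-concave (its log-second-derivative equals $-(p-2)/x^2-p(p-1)x^{p-2}<0$). Evaluating $h_q$ at the endpoints $x=1$ and $x=1-1/(2p)$ and invoking log-concavity on the interval in between shows $h_q(x)\geq (\mathrm{const})\,p$ throughout $[1-1/(2p),1]$. Independence of $\e_j$ from $R_j'$ combined with a two-dimensional area computation, split according to whether $\alpha\geq 1/(2p)$ or not, then yields $\mb{P}(\mathcal{E})\geq (\mathrm{const})\min\{1,\,p\alpha\}$. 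In the regime $\alpha\geq 1/(2p)$ one additionally uses the crude upper bound $\alpha\leq c_q\sqrt{2\sqrt{2}c/p}\leq (\mathrm{const})/\sqrt{L}$, analogous to \eqref{eq:alpha-upbd}, to convert a constant into $(\mathrm{const})\,\alpha$. Choosing $L$ large enough closes both cases and produces the threshold $p_0=Lc+2$; translating this via $q_0=p_0/(p_0-1)$ gives $q_0-1\geq 10^{-12}$.

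The main obstacle will be the density bound for $h_q$: unlike the section-case density $g_p$, the exponent on $x$ is $p-2$ rather than $p$, which slightly degrades the endpoint estimates and forces careful tracking of constants. Beyond this and the numerical bookkeeping needed to make $q_0-1\geq 10^{-12}$ explicit using the stability constant $\kappa_1=8\cdot 10^{-5}$ from Theorem \ref{thm:dds}, every step parallels the argument of Section \ref{sec:proof-sec}.
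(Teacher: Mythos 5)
Your proposal is correct and follows essentially the same route as the paper: apply Lemma \ref{lm:a1a2} to reduce the target to a lower bound of the form $\E(\alpha-|X|)_+\gtrsim \alpha^2$, restrict to the event where the signs are opposite and the magnitudes $|X_1|,|X_2|$ are both inside $[1-\Theta(q-1),1]$ and within $\alpha$ of each other, bound the probability of that event via a uniform function placed under the density of $|X_1|$ (the paper's Lemma \ref{lm:unif-under-proj}), and close a two-case split on whether $\alpha$ is above or below the interval width. Your worry that the exponent $p-2$ "degrades" the density endpoint estimate is unfounded: the paper's endpoint checks $(1-\tfrac{q-1}{2})^{\frac{2-q}{q-1}}>\tfrac12$ and $(1-\tfrac{q-1}{2})^{\frac{q}{q-1}}<e^{-1/2}$ are no harder than in the section case, and the same factor $\tfrac14$ comes out.
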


\begin{proof}
If the right-hand side is nonpositive, we are done. Otherwise,
\[
\|(a_1,a_2)\|_{\frac{q}{q-1}} < 2^{\frac12-\frac1q}.
\]
Letting $p = \frac{q}{q-1}$ and recalling \eqref{eq:a-proj}, we see that we can apply Lemma \ref{lm:a1a2} to conclude that
\begin{equation}\label{eq:a1-a2-alpha-proj}
|a_1-a_2| \leq 3.65\sqrt{\frac{c}{p-2}}\sqrt{1-a_1^2-a_2^2} \stackrel{\eqref{eq:a-proje}}{=} \frac{3.65}{c_q}\sqrt{\frac{c}{p-2}}\alpha \stackrel{\eqref{eq:estim-cq}}{<} \frac{5.25\alpha}{\sqrt{L}}.
\end{equation}
To simplify the right-hand side of \eqref{eq:aX-proj}, we write
\begin{align*}
c_q\Big(1-2^{\frac1q-\frac12}\|(a_1,a_2)\|_{\frac{q}{q-1}}\Big) &\leq c_q\big(1 - \|(a_1,a_2)\|_2\big) = c_q\frac{1-(a_1^2+a_2^2)}{1+\sqrt{a_1^2+a_2^2}} \\
&\stackrel{\eqref{eq:a-proje}}{=}\frac{\alpha^2}{c_q(1+\sqrt{a_1^2+a_2^2})} \stackrel{\eqref{eq:estim-cq}}{<} \frac{\alpha^2}{0.7(1+\sqrt{0.97})} < \frac{3}{4}\alpha^2,
\end{align*}
as we have $a_1^2+a_2^2 > 0.97$, see \eqref{eq:a1a2squares}. Therefore, it suffices to show that
\begin{equation}\label{eq:ultgoal-proj}
\E(\alpha-|X|)_+ \geq \frac{3}{4}\alpha^2.
\end{equation}

Since the $X_j$ are symmetric random variables,  each $X_j$ has the same distribution as $\e_j|X_j|$, for independent random signs $\e_j$,  also independent of all the other random variables.  We consider the event
\[
\mathcal{E} \eqdef \left\{|X_1| \leq 1, \ \big||X_1|-|X_2|\big| < \alpha, \ |a_1\e_1+a_2\e_2| < \frac14\alpha\right\},
\]
on which we have
\[
\big|a_1\e_1|X_1| + a_2\e_2|X_2|\big| \leq |X_1||a_1\e_1+a_2\e_2| + a_2\big||X_2|-|X_1|\big| < \frac{1}{4}\alpha + \frac{1}{\sqrt{2}}\alpha < \frac{24}{25}\alpha
\]
and thus, since $X$ has the same distribution as $a_1\e_1|X_1|+a_2\e_2|X_2|$,  we obtain the lower bound
\begin{equation} \label{eq:proba-projec}
\E(\alpha-|X|)_+ \geq \frac{\alpha}{25}\p{\mathcal{E}} = \frac{\alpha}{25}\mb{P}\big\{|X_1| \leq 1, \ \big||X_1|-|X_2|\big| < \alpha\big\}\mb{P}\Big\{|a_1\e_1+a_2\e_2| < \frac14\alpha\Big\}.
\end{equation}
The second probability in \eqref{eq:proba-projec} is clearly at least $\frac12$ provided that
\[
|a_1-a_2| < \frac{\alpha}{4}.
\]
For this to hold, it suffices that $L > (5.25\cdot 4)^2 = 441$, by virtue of \eqref{eq:a1-a2-alpha-proj}.
For the first probability, analogously to Lemma \ref{lm:unif-under}, we will place a constant function under the density $f_q$ of $|X_1|$.
\phantom\qedhere
\end{proof}

\begin{lemma}\label{lm:unif-under-proj}
Fix $q\in(1,\frac32)$ and let $f_q:\R_+\to\R_+$ be the density of $|X_1|$. Then, we have
\begin{equation}
f_q(x) \geq \frac{1}{4(q-1)}\1_{[1-\frac{q-1}{2},1]}(x), \qquad x > 0.
\end{equation}
\end{lemma}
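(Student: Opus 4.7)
The plan is to mirror the proof of Lemma \ref{lm:unif-under}: establish log-concavity of $f_q$, which reduces the pointwise lower bound to a check at the two endpoints of the interval, and then estimate each endpoint by elementary means. From Proposition \ref{prop:proj-rep}, the density of $|X_1|$ on $(0,\infty)$ equals
$$f_q(x) = \frac{p-1}{\Gamma(1+1/q)}\, x^{p-2} e^{-x^p}, \qquad p \eqdef \frac{q}{q-1},$$
and since $q\in(1,3/2)$ gives $p>3$, the second derivative of $\log f_q(x)=\mathrm{const}+(p-2)\log x - x^p$ equals $-(p-2)/x^2 - p(p-1)x^{p-2}<0$ on $(0,\infty)$. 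Thus $f_q$ is log-concave, and it suffices to verify $f_q(x)\geq (p-1)/4 = 1/[4(q-1)]$ at $x=1$ and $x=1-(q-1)/2$.

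At $x=1$, I would simply observe that $f_q(1) = (p-1)e^{-1}/\Gamma(1+1/q) \geq (p-1)/e > (p-1)/4$, using the crude bound $\Gamma(1+1/q) \leq 1$, which holds since $1+1/q\in[5/3,2)$ lies to the right of the global minimum of $\Gamma$ (located near $1.46$) and $\Gamma(2)=1$.

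At $x=1-t$ with $t\eqdef (q-1)/2\in(0,1/4]$, I would set $u\eqdef (1-t)^p$ and observe that $(1-t)^{p-2} = (1-t)^{-2}(1-t)^p \geq u$. Combined with $\Gamma(1+1/q)\leq 1$, the required bound reduces to $ue^{-u}\geq 1/4$. To locate $u$, I would apply the elementary inequalities $-t/(1-t)\leq \log(1-t)\leq -t$ to obtain
$$u \leq e^{-pt} = e^{-q/2} \leq e^{-1/2}, \qquad u \geq e^{-pt/(1-t)} = e^{-q/(3-q)} \geq e^{-1},$$
the last bound using $q\leq 3/2$. Since $u\mapsto ue^{-u}$ is increasing on $[0,1]$, its minimum over $[e^{-1},e^{-1/2}]$ is $e^{-1}\cdot e^{-e^{-1}}=e^{-1-1/e}$, and the numerical fact $1+1/e<\log 4$ (since $1.369<1.386$) yields $e^{-1-1/e}>1/4$, closing the argument.

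The only real subtlety is the final numerical margin: $e^{-1-1/e}\approx 0.255$ only marginally exceeds $1/4$, so the specific form of the lemma (constant $1/4$, interval length $(q-1)/2$) is essentially the sharpest that the log-concavity-plus-endpoints shortcut can deliver. A tighter constant or a longer interval would force a finer pointwise analysis of $f_q$.
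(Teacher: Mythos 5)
Your proof is correct and follows the same strategy as the paper's: invoke log-concavity of $f_q$ to reduce the pointwise bound to a check at the two endpoints $x=1$ and $x=1-\frac{q-1}{2}$, use $\Gamma(1+1/q)<1$ at both, and then do elementary arithmetic. The endpoint $x=1$ is handled identically. At the left endpoint the paper splits the power factor and exponential factor, applying Bernoulli's inequality to get $(1-t)^{p-2}>\frac12$ and separately bounding $(1-t)^p<e^{-1/2}$; you instead absorb everything into $u=(1-t)^p$ via $(1-t)^{p-2}\geq u$ and minimize $ue^{-u}$ over the admissible range, landing on the margin $e^{-1-1/e}>\frac14$. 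This is a minor algebraic variation — both arguments are correct, with yours delivering a slightly tighter (but still strict) numerical margin.
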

\begin{proof}
Recall from Proposition \ref{prop:proj-rep},
\[
f_q(x) = \frac{1}{(q-1)\Gamma(1+\frac1q)}x^{\frac{2-q}{q-1}}e^{-x^{\frac{q}{q-1}}}, \qquad x > 0.
\]
The proof is almost identical to that of Lemma \ref{lm:unif-under}. It suffices to check that the inequality holds for $x =1-\frac{q-1}{2}$ and $x = 1$. Since $(1-\frac{q-1}{2})^{\frac{2-q}{q-1}} > 1-\frac{2-q}{q-1}\frac{q-1}{2} = \frac{q}{2} > \frac{1}{2}$, for $1 < q < \frac32$, $(1-\frac{q-1}{2})^{\frac{q}{q-1}} < e^{-\frac{q}{2}} < e^{-\frac12}$ and $\Gamma(1+\frac1q) <1$, we obtain
\[
f_q\left(1-\frac{q-1}{2}\right) > \frac{1}{(q-1)}\frac{1}{2}e^{-e^{-\frac12}} > \frac{1}{4(q-1)}.
\]
Moreover,
\[
f_q(1) = \frac{1}{(q-1)\Gamma(1+\frac1q)}e^{-1} > \frac{1}{e(q-1)} > \frac{1}{4(q-1)}.\qedhere
\]
\end{proof}

\begin{proof} [Finishing the proof of Proposition \ref{prop:main-proj}]
As earlier, Lemma \ref{lm:unif-under-proj} gives
\begin{equation} \label{eq:trape-proj}
\begin{split}
\mb{P}\big\{|X_1| \leq 1, \ \big||X_1|-|X_2|\big| <\alpha\big\} &\geq \int\!\!\!\!\int_{\{x \leq 1,\  |x-y| < \alpha\}}\left(\frac{1}{4(q-1)}\right)^2\1_{[1-\frac{q-1}{2},1]}(x)\1_{[1-\frac{q-1}{2},1]}(y)\, \dd x\, \dd y \\
&\geq \begin{cases} \frac{1}{64}, & \alpha > \frac{q-1}{2}, \\ \frac{\alpha}{32(q-1)}, & \alpha \leq \frac{q-1}{2}, \end{cases}
\end{split}
\end{equation}
where the last inequality follows from \eqref{eq:proba2} with $p$ replaced by $\tfrac{1}{q-1}$.

\noindent $\bullet$ If $\alpha > \frac{q-1}{2}$, inequalities \eqref{eq:proba-projec} and \eqref{eq:trape-proj} yield the lower bound
\[
\E\big(\alpha-|X|\big)_+  \geq \frac{\alpha}{25}\cdot \frac{1}{64}\cdot \frac{1}{2} = \frac{\alpha}{3200}.
\]
Since
\[
\alpha \stackrel{\eqref{eq:a-proje}}{\leq} c_q\sqrt{1-2a_2^2} \stackrel{\eqref{eq:estim-cq}}{<} 0.71\sqrt{1-2\left(\frac{1}{\sqrt{2}}-\frac{c}{p}\right)^2} < 0.71\sqrt{2\sqrt{2}\frac{c}{p}} < \frac{1.2}{\sqrt{L}},
\]
and $L \geq 8\, 294\, 400$,  we obtain the desired bound \eqref{eq:ultgoal-proj}.

\noindent $\bullet$ If $\alpha \leq \frac{q-1}{2}$, inequalities \eqref{eq:proba-projec} and \eqref{eq:trape-proj} give
\[
\E\big(\alpha-|X|\big)_+  \geq \frac{\alpha}{25}\cdot\frac{\alpha}{32(q-1)}\cdot\frac12 = \frac{\alpha^2}{1600(q-1)}.
\]
As we assume $1 - \frac{1}{q} \leq \frac{1}{cL+2}$, this is at least the desired $\frac{3}{4}\alpha^2$ by a large margin for $L = 8\, 294\, 400$. The proof is complete for every $1\leq q\leq q_0$, where
\[
q_0 = \frac{Lc+2}{Lc+1} > 1+ 10^{-12}.\qedhere
\]
\end{proof}


\section{Stability estimates with explicit constants} \label{sec:stab}


The proofs of both Theorems \ref{thm:dds} and \ref{thm:cnt} presented here follow the same strategy taken from \cite{CNT22}, which we shall now outline. For a unit vector $a$ in $\R^n$, consider again the deficit
\[
\delta(a) = \left|a - \frac{e_1+e_1}{\sqrt{2}}\right|^2 = 2-\sqrt{2}(a_1+a_2).
\]
Let $a$ be a unit vector and without loss of generality assume that $a_1\geq\cdots\geq a_n\geq0$. The approach leading to the stability of the inequalities of Szarek and Ball differs depending on whether the vector $a$ is \emph{close to} or \emph{far from} the extremizer $\frac{e_1+e_2}{\sqrt{2}}$, as measured by $\delta(a)$. 

\noindent \emph{Case 1.} When $a$ is close to $\tfrac{e_1+e_2}{\sqrt{2}}$, we quantitatively sharpen the inequalities of Szarek and Ball by reapplying them only to a \emph{portion} of the vector $a$, thus exhibiting their self-improving feature. The probabilistic formulae are crucial for this part. 

\smallskip

When $a$ is far from the extremizer, three things can happen. 

\smallskip

\noindent \emph{Case 2.} If the largest magnitude of the coordinates of $a$ is below $\tfrac{1}{\sqrt{2}}$, the second largest magnitude has to drop \emph{well} below $\frac{1}{\sqrt{2}}$ on the account of $\delta(a)$ being large and the classical Fourier-analytic approach of Haagerup \cite{Haa81} and Ball \cite{Bal86} allows to track the deficit.

\smallskip

\noindent \emph{Case 3.} If the largest magnitude is \emph{barely} above $\frac{1}{\sqrt{2}}$, a Lipschitz property of the section and projection functions allows to reduce this case to the one from Case 2.

\smallskip

\noindent \emph{Case 4.} If the largest magnitude is bounded below away from $\frac{1}{\sqrt{2}}$, an easy convexity/projection argument gives a strict inequality with a margin.


\subsection{Stability of Szarek's inequality} We first deal \mbox{with the sharp Khinchin inequality of \cite{Sza76}.}

\subsubsection*{Case 1.} We begin with the case that $a$ is near the extremizer.

\begin{lemma}\label{lm:stab-szarek}
Let $0 < \delta_0 < \frac23$ and take $c_0=\frac{1}{2\sqrt{2}}(\sqrt{\frac15(4-\delta_0)}-\sqrt{\delta_0})>0$. For every unit vector $a$ in $\R^n$ with $a_1 \geq \dots \geq a_n \geq 0$ satisfying $\delta(a) \leq \delta_0$, we have
\begin{equation}
\E\Big|\sum_{j=1}^n a_j\e_j\Big| \geq \frac{1}{\sqrt{2}} + c_0 \sqrt{\delta(a)}.
\end{equation}
\end{lemma}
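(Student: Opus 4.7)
My plan is to combine the probabilistic identity $\E|X+Y| = \E\max(|X|,|Y|)$ (valid for symmetric independent scalars) with a conditional Jensen step that feeds Szarek's own inequality back into the tail of $a$, reducing the problem to an explicit two-variable extremal computation. This is the "self-improvement" mechanism alluded to in the outline of Case~1.

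First, I split $S = \sum_{j=1}^n a_j\e_j = X+Y$ with $X = a_1\e_1+a_2\e_2$ and $Y = \sum_{j\geq 3} a_j\e_j$, which are independent and each symmetric. The identity $|x+y|+|x-y| = 2\max(|x|,|y|)$ combined with $\E|X+Y| = \E|X-Y|$ (by symmetry) gives
\[
\E|S| = \E\max(|X|,|Y|).
\]
Conditioning on $X$ and using convexity of $t\mapsto\max(|X|,t)$ on $t\geq 0$, Jensen's inequality yields $\E_Y\max(|X|,|Y|) \geq \max(|X|,\E|Y|)$. Applying Szarek's inequality to the unit vector $(a_3,\ldots,a_n)/\sqrt{1-a_1^2-a_2^2}$ gives $\E|Y|\geq \beta \eqdef \sqrt{(1-a_1^2-a_2^2)/2}$. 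Since $|X|$ takes the values $a_1+a_2$ and $a_1-a_2$ with equal probability and an elementary check under $\delta(a)\leq\delta_0<\tfrac23$ shows $a_1^2+a_2^2\geq 4/9>1/3$, whence $a_1+a_2\geq \beta$, we arrive at
\[
\E|S|\geq \frac{a_1+a_2}{2} + \frac{1}{2}\max(a_1-a_2,\beta).
\]

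The heart of the argument is the two-variable optimization lower bounding $\max(a_1-a_2,\beta)$. With $u = a_1+a_2 = \sqrt{2}-\delta/\sqrt{2}$ fixed and $w = a_1^2+a_2^2$ ranging over $[u^2/2,1]$, we have $(a_1-a_2)^2 = 2w-u^2$ increasing in $w$ and $\beta^2 = (1-w)/2$ decreasing in $w$. Thus the minimum of $\max((a_1-a_2)^2,\beta^2)$ is attained where they are equal, namely at $w=(1+2u^2)/5$, and equals
\[
\frac{2-u^2}{5} = \frac{\delta(4-\delta)}{10}.
\]
Substituting,
\[
\E|S|-\frac{1}{\sqrt{2}} \geq -\frac{\delta}{2\sqrt{2}} + \frac{1}{2}\sqrt{\frac{\delta(4-\delta)}{10}} = \frac{\sqrt{\delta}}{2\sqrt{2}}\Bigl(\sqrt{(4-\delta)/5}-\sqrt{\delta}\Bigr).
\]
The bracketed factor is decreasing in $\delta$, so for $\delta\leq\delta_0$ it is bounded below by its value at $\delta=\delta_0$, which is exactly $2\sqrt{2}\,c_0$. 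The main obstacle is extracting the sharp constant: cruder bounds on $\max(a_1-a_2,\beta)$ (such as replacing $\max$ by an average over $w$) would lose the factor $1/5$, whereas locating the balance point $w=(1+2u^2)/5$ is what produces the precise constant $c_0=\frac{1}{2\sqrt{2}}\bigl(\sqrt{(4-\delta_0)/5}-\sqrt{\delta_0}\bigr)$ and simultaneously forces the threshold $\delta_0<\tfrac23$ for positivity.
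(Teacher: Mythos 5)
Your proof is correct and takes essentially the same approach as the paper: the same decomposition into $X=a_1\e_1+a_2\e_2$ and the tail $Y$, the same $\E|X+Y|=\E\max(|X|,|Y|)$ identity followed by a conditional Jensen step and an application of Szarek to the normalized tail, and then the same reduction to $\frac12(a_1+a_2)+\frac12\max(a_1-a_2,\beta)$. Your handling of the final extremal step — minimizing $\max((a_1-a_2)^2,\beta^2)$ over $w=a_1^2+a_2^2$ at the balance point $w=(1+2u^2)/5$ — is just a more streamlined phrasing of the paper's two-case analysis (the cases $b_2\gtrless\theta$, each bounded by the common value at the cross-over), so I would not call it a different route.
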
 

\begin{proof}
We will assume without loss of generality that $n \geq 3$ and $a_1^2+a_2^2 < 1$ (the remaining cases can be obtained by taking a limit). Let 
\[
\theta \eqdef \frac{1}{\sqrt{2}} \sqrt{1-a_1^2-a_2^2}.
\]
Arguing as in the induction of Section \ref{sec:proof-proj} and using Jensen's and Szarek's inequalities, we get
\begin{align*}
	\E\Big|\sum_{j=1}^n a_j \e_j \Big| & = \E \max\Big\{ |a_1 \e_1 + a_2 \e_2|, \Big| \sum_{j=3}^n a_j \e_j\Big| \Big\} \geq \E \max\Big\{ |a_1 \e_1 + a_2 \e_2|, \mb{E}\Big| \sum_{j=3}^n a_j \e_j\Big| \Big\}
	\\ & \geq \mb{E}\max\big\{ |a_1\e_1+a_2\e_2|,\theta\big\} = \frac12 \max\{a_1+a_2, \theta\} + \frac12 \max\{a_1-a_2, \theta\} \\
	& \geq \frac{a_1+a_2}{2} + \frac12 \sqrt{\frac15 (a_1-a_2)^2 + \frac45 \theta^2} 
	  = \frac{2-\delta(a)}{2\sqrt{2}} + \frac{1}{2\sqrt{2}} \sqrt{\frac15 (4-\delta(a))} \sqrt{\delta(a)} \\
	 & = \frac{1}{\sqrt{2}} + \frac{1}{2\sqrt{2}} \left( \sqrt{\frac15(4-\delta(a))} - \sqrt{\delta(a)} \right) \sqrt{\delta(a)} 
	  \geq \frac{1}{\sqrt{2}}+ c_0 \sqrt{\delta(a)},
\end{align*}
whenever $\delta(a)\leq \delta_0$.
\end{proof} 
 
\subsubsection*{Case 2.} We assume that $a$ is far from the extremizer and $a_1$ is at most $\frac{1}{\sqrt{2}}$. A key step in Haagerup's slick Fourier-analytic proof of Szarek's inequality from \cite{Haa81} is the bound
\begin{equation}\label{eq:Haa}
	\E \Big| \sum_{j=1}^n a_j \e_j \Big| \geq \sum_{j=1}^n a_j^2 F(a_j^{-2}), 
\end{equation}
for every unit vector $a$ in $\R^n$, where the function $F:(0,\infty)\to\R$ is given by
\[
F(s) = \frac{2}{\sqrt{\pi s}} \cdot \frac{\Gamma\left( \frac{s+1}{2} \right)}{\Gamma\left( \frac{s}{2} \right)}, \qquad s > 0.
\]
Haagerup showed that the function $F(s)$ is increasing on $(0,\infty)$, which will be crucial for us.
 
\begin{lemma}\label{lm:a1-small}
Let $0<\delta_0<2$. For every unit vector $a$ in $\R^n$ with $a_1 \geq \dots \geq a_n \geq 0$ satisfying $\delta(a) \geq \delta_0$ and $a_1 \leq \frac{1}{\sqrt{2}}$, we have
\begin{equation}
\E \Big| \sum_{j=1}^n a_j \e_j \Big| \geq \frac{1}{\sqrt{2}} + c_1\sqrt{\delta(a)},
\end{equation}
with $c_1 =  \frac{1}{2 \sqrt{2}} \left( F\left(  \frac{8}{(2-\delta_0)^2}  \right) - F(2) \right)$.
\end{lemma}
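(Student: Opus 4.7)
The strategy is to apply Haagerup's lower bound \eqref{eq:Haa} together with monotonicity of $F$, exploiting the a priori size constraints on the coordinates that are forced by the hypotheses $a_1 \leq 1/\sqrt{2}$ and $\delta(a) \geq \delta_0$.

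The first observation is that $F(2)=\tfrac{1}{\sqrt{2}}$, since $F(2) = \tfrac{2}{\sqrt{2\pi}}\cdot\tfrac{\Gamma(3/2)}{\Gamma(1)} = \tfrac{1}{\sqrt{2}}$; this is the value realizing equality in Szarek's inequality. Under the assumption $a_1 \leq 1/\sqrt{2}$, every $j$ satisfies $a_j^{-2} \geq 2$, so monotonicity of $F$ gives $F(a_j^{-2}) \geq F(2)$ for each $j$, which already recovers the baseline bound $\sum_j a_j^2 F(a_j^{-2}) \geq 1/\sqrt{2}$. The additional stability term will come from the coordinates $a_j$ with $j \geq 2$, which are quantitatively smaller than $1/\sqrt{2}$: combining the ordering $a_j \leq a_2 \leq a_1$ with the identity $a_1 + a_2 = (2-\delta(a))/\sqrt{2} \leq (2-\delta_0)/\sqrt{2}$ yields $a_j \leq a_2 \leq (2-\delta_0)/(2\sqrt{2})$ for every $j \geq 2$, and hence $a_j^{-2} \geq 8/(2-\delta_0)^2$.

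Plugging this sharper bound into \eqref{eq:Haa} for the indices $j \geq 2$ and keeping only the baseline $F(2)$ for $j=1$, I would obtain
\begin{equation*}
\sum_{j=1}^n a_j^2 F(a_j^{-2}) \geq a_1^2 F(2) + (1-a_1^2)\, F\!\left(\tfrac{8}{(2-\delta_0)^2}\right) = \frac{1}{\sqrt{2}} + (1-a_1^2)\,\Delta,
\end{equation*}
where I set $\Delta \eqdef F\!\left(\tfrac{8}{(2-\delta_0)^2}\right) - F(2)$, which is positive by monotonicity of $F$ and $\delta_0 > 0$. To finish, it remains to compare $1 - a_1^2$ to $\sqrt{\delta(a)}$: the hypothesis $a_1 \leq 1/\sqrt{2}$ gives $1 - a_1^2 \geq 1/2$, while nonnegativity of $a_1, a_2$ gives the trivial bound $\delta(a) \leq 2$, and hence $\sqrt{\delta(a)}/(2\sqrt{2}) \leq 1/2$. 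Therefore $(1 - a_1^2)\,\Delta \geq \tfrac{\Delta}{2\sqrt{2}}\sqrt{\delta(a)} = c_1 \sqrt{\delta(a)}$, as desired.

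There is no genuinely hard step in this argument; it is a direct application of Haagerup's inequality \eqref{eq:Haa} and monotonicity of $F$. The specific factor $\tfrac{1}{2\sqrt{2}}$ appearing in the definition of $c_1$ is dictated by the crude but sufficient estimate $1 - a_1^2 \geq 1/2 \geq \sqrt{\delta(a)}/(2\sqrt{2})$. A more refined splitting that also exploits $F(a_1^{-2}) > F(2)$ when $a_1$ is strictly below $1/\sqrt{2}$ could produce a larger constant, but is unnecessary for the applications in Section \ref{sec:stab}.
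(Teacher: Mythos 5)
Your proof is correct and follows essentially the same route as the paper's: both apply Haagerup's bound \eqref{eq:Haa}, use the monotonicity of $F$ to replace $F(a_j^{-2})$ by $F(2)$ for $j=1$ and by $F\big(8/(2-\delta_0)^2\big)$ for $j\geq 2$, then exploit $a_1^2\leq 1/2$ and $\delta(a)\leq 2$ to obtain the stated constant $c_1$. The only difference is cosmetic: you group the terms as $F(2)+(1-a_1^2)\Delta$ whereas the paper writes $F(l_0)+a_1^2(F(2)-F(l_0))$, which are the same quantity.
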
 
 
\begin{proof}
We have
\[
	a_2 \leq \frac{a_1+a_2}{2} = \frac{2-\delta(a)}{2 \sqrt{2}} \leq \frac{2-\delta_0}{2 \sqrt{2}}, 
\]
which shows that $a_j^{-2} \geq l_0$, for all $j \geq2$, with $l_0 = \frac{8}{(2-\delta_0)^2}>2$.
Employing \eqref{eq:Haa} and using the monotonicity of $F$, we therefore have
\begin{align*}
	\E \Big| \sum_{j=1}^n a_j \e_j \Big| & \geq a_1^{2} F(2) + \sum_{j \geq 2} a_j^2 F(l_0) = a_1^{2} F(2) + (1-a_1^2) F(l_0) \\
	& = F(l_0) + a_1^{2}\big(F(2)-F(l_0)\big) \geq F(l_0) + \frac12\big(F(2)-F(l_0)\big) \\
	& = \frac12 \big(F(2)+ F(l_0)\big) = \frac{1}{\sqrt{2}} + \frac12 (F(l_0)-F(2)),
\end{align*}
since $F(2)=\tfrac{1}{\sqrt{2}}$. The conclusion follows since $\delta(a)\leq 2$.
\end{proof} 

\subsubsection*{Case 3.} We assume that $a$ is far from the extremizer but $a_1$ is barely larger than $\tfrac{1}{\sqrt{2}}$.
 
\begin{lemma}\label{lm:a1-moderate}
Let $\gamma_0 \leq 1- \frac{1}{\sqrt{2}}$ and $2\sqrt{\gamma_0}<\delta_0<2$. For every unit vector $a$ in $\R^n$ with coordinates $a_1 \geq \dots \geq a_n \geq 0$ satisfying $\frac{1}{\sqrt{2}} \leq a_1 \leq \frac{1}{\sqrt{2}} + \gamma_0 $ and $\delta(a) \geq \delta_0$, we have
\begin{equation}
\E \Big| \sum_{j=1}^n a_j \e_j \Big| \geq \frac{1}{\sqrt{2}} + c_2 \sqrt{\delta(a)},
\end{equation}
with
\begin{equation}
	c_2 = \frac{1}{2 \sqrt{2}} \left( F\left(  \frac{8}{(2+2 \sqrt{\gamma_0}-\delta_0)^2}  \right) - F(2) \right) \sqrt{\delta_0-2 \sqrt{\gamma_0}} - \sqrt{2\gamma_0+\gamma_0^2}.
\end{equation}
\end{lemma}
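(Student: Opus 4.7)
The plan is to reduce Lemma \ref{lm:a1-moderate} to Lemma \ref{lm:a1-small} by approximating $a$ with a unit vector $\tilde{a}'$ in $\R^{n+1}$ whose first coordinate equals $\tfrac{1}{\sqrt{2}}$. Setting $t \eqdef a_1 - \tfrac{1}{\sqrt{2}} \in [0, \gamma_0]$, I will embed $a$ into $\R^{n+1}$ as $\tilde{a} \eqdef (a_1, \dots, a_n, 0)$ and define
\[
\tilde{a}' \eqdef \Bigl(\tfrac{1}{\sqrt{2}},\, a_2,\, \dots,\, a_n,\, \sqrt{a_1^2 - \tfrac{1}{2}}\Bigr).
\]
This will be a unit vector, since $|\tilde{a}'|^2 = \tfrac{1}{2} + (1-a_1^2) + (a_1^2 - \tfrac{1}{2}) = 1$. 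A direct computation gives $|\tilde{a} - \tilde{a}'|^2 = t^2 + (a_1^2 - \tfrac{1}{2}) = 2t^2 + \sqrt{2}t$, and since $\gamma_0 \leq 1 - \tfrac{1}{\sqrt{2}} < 2-\sqrt{2}$, the routine estimate $2t^2+\sqrt{2}t \leq 2\gamma_0+\gamma_0^2$ yields $|\tilde{a} - \tilde{a}'| \leq \sqrt{2\gamma_0 + \gamma_0^2}$.

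Next, I will use that $\phi(x) \eqdef \E|\sum_j x_j \e_j|$ is $1$-Lipschitz in the Euclidean norm: the reverse triangle inequality combined with the standard bound $\E|\sum c_j \e_j| \leq \bigl(\sum c_j^2\bigr)^{1/2}$ yields $|\phi(x) - \phi(y)| \leq \E|\sum (x_j - y_j)\e_j| \leq |x-y|$. Therefore
\[
\E\Bigl|\sum_{j=1}^n a_j \e_j\Bigr| \geq \E\Bigl|\sum_{j=1}^{n+1} \tilde{a}'_j \e_j\Bigr| - \sqrt{2\gamma_0 + \gamma_0^2}.
\]

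To apply Lemma \ref{lm:a1-small} to $\tilde{a}'$ (after reordering, using permutation invariance of $\phi$), I first need to verify that its largest coordinate equals $\tfrac{1}{\sqrt{2}}$: the condition $\delta(a) \geq \delta_0 > 0$ forces $a_2 \leq \tfrac{1-\delta_0}{\sqrt{2}} \leq \tfrac{1}{\sqrt{2}}$, while $\gamma_0 \leq 1 - \tfrac{1}{\sqrt{2}}$ is exactly what is needed for $\sqrt{a_1^2 - \tfrac{1}{2}} \leq \tfrac{1}{\sqrt{2}}$. Then the deficit will satisfy
\[
\delta(\tilde{a}') = 1 - \sqrt{2}\max\bigl\{a_2,\, \sqrt{a_1^2-\tfrac{1}{2}}\bigr\} \geq 1 - \sqrt{2}a_2 - \sqrt{2a_1^2 - 1},
\]
and substituting $\sqrt{2}a_2 \leq 2 - \delta_0 - \sqrt{2}a_1$ (a rearrangement of $\delta(a) \geq \delta_0$) together with $a_1 = \tfrac{1}{\sqrt{2}} + t$ produces $\delta(\tilde{a}') \geq \delta_0 + \sqrt{2}t - \sqrt{2\sqrt{2}t + 2t^2}$. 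The inequality $\sqrt{2\sqrt{2}t + 2t^2} \leq 2\sqrt{\gamma_0}$ (which, at the extremal $t = \gamma_0$, reduces to $\gamma_0 \leq 2-\sqrt{2}$) then gives $\delta(\tilde{a}') \geq \delta_0 - 2\sqrt{\gamma_0} > 0$. Applying Lemma \ref{lm:a1-small} with this improved deficit lower bound, using $\sqrt{\delta(\tilde{a}')} \geq \sqrt{\delta_0 - 2\sqrt{\gamma_0}}$, and subtracting the Lipschitz error will yield $\E|\sum_{j=1}^n a_j \e_j| \geq \tfrac{1}{\sqrt{2}} + c_2$. Finally, since $a_1 \geq \tfrac{1}{\sqrt{2}}$ forces $\delta(a) \leq 1$, the inequality $c_2\sqrt{\delta(a)} \leq c_2$ will hold whenever $c_2 \geq 0$; in the remaining case $c_2 < 0$, the conclusion is immediate from Szarek's sharp inequality $\E|\sum a_j \e_j| \geq \tfrac{1}{\sqrt{2}}$.

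The main obstacle will be establishing the deficit bound $\delta(\tilde{a}') \geq \delta_0 - 2\sqrt{\gamma_0}$, which requires handling both candidates for the second-largest coordinate of $\tilde{a}'$ and invoking the quadratic inequality $\sqrt{2\sqrt{2}t + 2t^2} \leq 2\sqrt{\gamma_0}$. The specific combination of $\delta_0$ and $\gamma_0$ appearing in the final constant $c_2$ will arise precisely from balancing the loss in deficit with the loss $\sqrt{2\gamma_0+\gamma_0^2}$ in the Lipschitz approximation.
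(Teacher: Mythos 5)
Your proof is correct and follows essentially the same strategy as the paper: approximate $a$ by a unit vector whose largest coordinate equals $\tfrac{1}{\sqrt 2}$, control the $L_1$-error via the $1$-Lipschitz property of $x\mapsto\mb E|\sum x_j\e_j|$, verify that the deficit of the approximating vector is at least $\delta_0-2\sqrt{\gamma_0}$, and invoke Lemma~\ref{lm:a1-small}. The only (cosmetic) difference is the choice of approximant: the paper replaces the pair $(a_1,a_2)$ with $\bigl(\tfrac{1}{\sqrt2},\sqrt{a_1^2+a_2^2-\tfrac12}\bigr)$ staying in $\R^n$, while you split $a_1$ into the two coordinates $\bigl(\tfrac{1}{\sqrt2},\sqrt{a_1^2-\tfrac12}\bigr)$ in $\R^{n+1}$ and keep $a_2,\dots,a_n$ untouched; both choices give exactly the same bounds $|a-b|\leq\sqrt{2\gamma_0+\gamma_0^2}$ and $\delta(b)\geq\delta_0-2\sqrt{\gamma_0}$, hence the same constant $c_2$.
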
 
 
\begin{proof}
We can assume that $c_2 \geq 0$ since otherwise it is enough to observe that
\[
	\E \Big| \sum_{j=1}^n a_j \e_j \Big| = \E \Big| a_1 + \sum_{j=2}^n a_j \e_j \Big|  \geq \Big| a_1 + \sum_{j=2}^n a_j  \E  \e_j \Big| = |a_1| \geq \frac{1}{\sqrt{2}}.
\]
Consider the unit vector 
\[
b\eqdef\left(\frac{1}{\sqrt{2}}, \sqrt{a_1^2+ a_2^2 - \frac12}, a_3, \ldots, a_n \right).
\]
Then by the triangle inequality, we obtain the following Lipschitz property,
\begin{align*}
	\E \Big| \sum_{j=1}^n a_j \e_j \Big| & \geq \E \Big| \sum_{j=1}^n b_j \e_j \Big| - \E \Big| \sum_{j=1}^n (a_j-b_j) \e_j \Big| \\ & \geq \E \Big| \sum_{j=1}^n b_j \e_j \Big| - \Big(\E \Big| \sum_{j=1}^n (a_j-b_j) \e_j \Big|^2\Big)^{1/2} 
	= \Big| \sum_{j=1}^n b_j \e_j \Big| - |a-b|.
\end{align*} 
Note that $b_1 \geq b_2$ and since $b_2 \geq a_2$, also $b_2 \geq b_3 \geq \dots \geq b_n$. Moreover,
\begin{equation} \label{eq:estim-ba}
\sqrt{a_1^2+a_2^2 - \frac{1}{2}} - a_2 = \frac{a_1^2-\frac{1}{2}}{\sqrt{a_1^2+a_2^2 - \frac{1}{2}} + a_2} \leq \sqrt{a_1^2-\frac{1}{2}} \leq \sqrt{\sqrt{2} \gamma_0 + \gamma_0^2}  < \sqrt{2\gamma_0},
\end{equation}
thus
\[
|a-b|^2 = \left(a_1 - \frac{1}{\sqrt{2}}\right)^2 + \left(\sqrt{a_1^2+a_2^2 - \frac{1}{2}} - a_2\right)^2 < \gamma_0^2 + 2\gamma_0.
\]
Observe that, since $a_1\geq\tfrac{1}{\sqrt{2}}$, we have
\begin{align*}
\delta(b) & = 2 - \sqrt{2}\left( \frac{1}{\sqrt{2}}+\sqrt{a_1^2+a_2^2 -\frac12} \right)  = \delta(a) - \sqrt{2}\left(\frac{1}{\sqrt{2}}+\sqrt{a_1^2+a_2^2 - \frac{1}{2}}-a_1-a_2\right)\\
&\geq \delta_0 -\sqrt{2}\left(\sqrt{a_1^2+a_2^2 - \frac{1}{2}}-a_2\right) \stackrel{\eqref{eq:estim-ba}}{>} \delta_0 - 2\sqrt{\gamma_0}.
\end{align*}
 Thus, applying Lemma \ref{lm:a1-small} to the vector $b$ and using the above estimates, we get
\begin{align*}
	\E \Big| \sum_{j=1}^n a_j \e_j \Big| & \geq \frac{1}{\sqrt{2}} +  \frac{1}{2 \sqrt{2}} \left( F\left(  \frac{8}{(2+2 \sqrt{\gamma_0}-\delta_0)^2}  \right) - F(2) \right) \sqrt{\delta(b)} - \sqrt{2\gamma_0+\gamma_0^2} \\
	& \geq \frac{1}{\sqrt{2}} +  \frac{1}{2 \sqrt{2}} \left( F\left(  \frac{8}{(2+2 \sqrt{\gamma_0}-\delta_0)^2}  \right) - F(2) \right) \sqrt{\delta_0-2 \sqrt{\gamma_0}} - \sqrt{2\gamma_0+\gamma_0^2}.
\end{align*}
Finally, as $a_1 \geq \frac{1}{\sqrt{2}}$, we have $\delta(a) = 2-\sqrt{2}(a_1+a_2) \leq 1-\sqrt{2}a_2 \leq 1$ and the proof is complete.
\end{proof}

\subsubsection*{Case 4.} Finally, there is also a simple bound for the case that $a_1$ is much larger than $\tfrac{1}{\sqrt{2}}$.

\begin{lemma}\label{lm:a1-large}
Let $\gamma_0>0$. For every unit vector $a$ in $\R^n$ with $a_1 \geq \frac{1}{\sqrt{2}}+ \gamma_0$, we have
\begin{equation}
	\E\Big| \sum_{j=1}^n a_j \e_j \Big| \geq \frac{1}{\sqrt{2}} + \gamma_0 \sqrt{\delta(a)}.
\end{equation}
\end{lemma}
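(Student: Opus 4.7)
The plan is to exploit a crude but effective single-coordinate lower bound on $\E|S|$, where $S \eqdef \sum_{j=1}^n a_j\e_j$, together with the observation that in the hypothesis regime the deficit $\delta(a)$ is automatically at most~$1$, so that $\sqrt{\delta(a)}\leq 1$ absorbs the explicit dependence on $\delta(a)$ in the conclusion.

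First, I would establish the dimension-free lower bound $\E|S|\geq a_1$. This is immediate by conditioning on $\e_1$: writing $T\eqdef\sum_{j\geq 2}a_j\e_j$ (independent of $\e_1$) and using the identity $|x+y|+|x-y|=2\max(|x|,|y|)$,
\[
\E|S| \;=\; \tfrac12\E\bigl(|a_1+T|+|a_1-T|\bigr) \;=\; \E\max(a_1,|T|) \;\geq\; a_1.
\]
Equivalently, $\E|S|\geq|\E(\e_1 S)|=a_1$, since $\E \e_1\e_j = \delta_{1j}$.

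Second, I would note that, by the standard convention $a_1\geq\cdots\geq a_n\geq 0$, one has $a_2\geq 0$, so under the hypothesis $a_1\geq\frac{1}{\sqrt{2}}+\gamma_0$,
\[
\delta(a) \;=\; 2-\sqrt{2}(a_1+a_2) \;\leq\; 2-\sqrt{2}\,a_1 \;\leq\; 2-\sqrt{2}\cdot\tfrac{1}{\sqrt{2}} \;=\; 1,
\]
and in particular $\sqrt{\delta(a)}\leq 1$.

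Combining the two observations with the hypothesis $a_1\geq \frac{1}{\sqrt{2}}+\gamma_0$ yields
\[
\E|S| \;\geq\; a_1 \;\geq\; \frac{1}{\sqrt{2}}+\gamma_0 \;\geq\; \frac{1}{\sqrt{2}}+\gamma_0\sqrt{\delta(a)},
\]
which is the desired inequality. There is essentially no technical obstacle: the whole purpose of Case~4 is that once $a_1$ sits comfortably above $1/\sqrt{2}$, the one-line bound $\E|S|\geq a_1$ already beats Szarek's constant by the full margin $\gamma_0$, and the uniform upper bound $\sqrt{\delta(a)}\leq 1$ on the right-hand side makes this margin sufficient.
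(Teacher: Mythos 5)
Your proof is correct and matches the paper's argument exactly: the paper also derives $\E\big|\sum_j a_j\e_j\big|\geq a_1$ (via Jensen's inequality, which is what your two one-line justifications spell out), observes $\delta(a)\leq 1$ in this regime, and chains the inequalities in the same way. You have merely made the two ingredients more explicit.
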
 
 
\begin{proof}
By Jensen's inequality and the fact that $\delta(a) \leq 1$,
\[
	\E\Big| \sum_{j=1}^n a_j \e_j \Big| \geq |a_1|   \geq  \frac{1}{\sqrt{2}} + \gamma_0  \geq   \frac{1}{\sqrt{2}} + \gamma_0 \sqrt{\delta(a)}.\qedhere
\]
\end{proof}

\subsubsection*{Constants.}

Combining Lemmas \ref{lm:stab-szarek}, \ref{lm:a1-small}, \ref{lm:a1-moderate} and \ref{lm:a1-large} with $\delta_0=0.66$ (\emph{almost} the maximal value allowed in Lemma \ref{lm:stab-szarek}) and $\gamma_0 = 8\cdot 10^{-5}$, we conclude that for all unit vectors $a$ in $\R^n$,
\[
\mb{E}\Big|\sum_{j=1}^n a_j\e_j\Big| \geq \frac{1}{\sqrt{2}}+ \kappa_1\sqrt{\delta(a)}
\]
with 
\[
\kappa_1 \geq \min\left\{c_0, c_1, c_2, \gamma_0\right\} > \min\big\{1.7\cdot 10^{-3}, 1.6\cdot 10^{-2}, 5.1\cdot 10^{-4}, 8\cdot10^{-5} \big\} = 8\cdot 10^{-5}.
\]
This completes the proof of Theorem \ref{thm:dds}.\hfill$\Box$


\subsection{Stability of Ball's inequality}

We now turn to the study of Ball's inequality \eqref{eq:ball}. Throughout this section we denote by $\QQ_n = [-\frac{1}{2},\frac{1}{2}]^n$ the cube of unit volume.

\subsubsection*{Case 1.} We begin with the case that $a$ is near the extremizer.

\begin{lemma}\label{lm:ball-stab-near}
For every $n \geq 2$ and every unit vector $a$ in $\R^n$ with $a_1 \geq \dots \geq a_n \geq 0$ satisfying $\delta(a) \leq \frac{1}{4}$, we have
\begin{equation}\label{eq:ball-stab1}
\vol\big(\QQ_n \cap a^\perp\big) \leq \sqrt{2} - c_1\sqrt{\delta(a)},
\end{equation}
where $c_1 = 0.12$.
\end{lemma}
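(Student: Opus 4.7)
I would follow the Jensen-type reduction used in Lemma~\ref{lm:stab-szarek}. Starting from the probabilistic representation in Proposition~\ref{prop:sec-rep} at $p=\infty$ (so $R_j\equiv 1$),
\[
\vol(\QQ_n\cap a^\perp)=\E\Big|\sum_{j=1}^n a_j\xi_j\Big|^{-1},
\]
I split the sum as $X+Y$ with $X=a_1\xi_1+a_2\xi_2$ and $Y=\sum_{j\ge 3}a_j\xi_j$. Being independent and rotationally symmetric in $\R^3$, these vectors satisfy $\E|X+Y|^{-1}=\E\min\{|X|^{-1},|Y|^{-1}\}$, as exploited in Section~\ref{sec:proof-sec}. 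Concavity of $t\mapsto\min\{|X|^{-1},t\}$ together with Jensen's inequality and Ball's inequality~\eqref{eq:ball} applied to the unit tail $b=(a_3,\ldots,a_n)/\sqrt{1-a_1^2-a_2^2}$ then yield
\[
\vol(\QQ_n\cap a^\perp)\le \E\min\{|X|^{-1},\gamma\},\qquad \gamma=\tfrac{\sqrt 2}{\sqrt{1-a_1^2-a_2^2}}.
\]

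The right-hand side admits a closed form. Since $\langle\xi_1,\xi_2\rangle$ is uniform on $[-1,1]$, the magnitude $|X|$ has density $r/(2a_1a_2)$ on $[d,s]$ with $d=a_1-a_2$ and $s=a_1+a_2$. Direct integration gives
\[
\E\min\{|X|^{-1},\gamma\}=\frac{1}{a_1}-\frac{(\rho-d)_+^2}{4a_1a_2\rho},\qquad \rho=\tfrac{1}{\gamma}=\sqrt{\tfrac{1-a_1^2-a_2^2}{2}}.
\]
It remains to verify, for $\delta(a)\le 1/4$, that this expression is at most $\sqrt 2-c_1\sqrt{\delta(a)}$ with $c_1=0.12$. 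Mirroring Lemma~\ref{lm:stab-szarek}, I would split into two sub-cases. When $d\ge \lambda_0\sqrt{\delta(a)}$ for a suitable threshold $\lambda_0$, the coordinate $a_1=(s+d)/2$ lies far enough above $1/\sqrt 2$ that $1/a_1\le \sqrt 2-c_1\sqrt{\delta(a)}$ already holds. In the complementary regime $\rho>d$, and the explicit deficit $(\rho-d)^2/(4a_1a_2\rho)$ must carry the $\Theta(\sqrt{\delta(a)})$ improvement over $1/a_1$, exploiting that $1/a_1-\sqrt 2=O(\delta(a))$ when $a_1\approx 1/\sqrt 2$.

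The main obstacle is that the Jensen step loses precision exactly when the tail $b$ is itself near the extremizer $(e_1+e_2)/\sqrt 2$, in which case $|Y|^{-1}$ is far from concentrated around its mean. A quick numerical check at the doubly-extremal configuration $a=(a,a,b,b,0,\ldots)$ with $\delta(a)=1/4$ reveals that the Jensen--Ball bound gives $\approx 1.393$ while the target $\sqrt 2-0.12\cdot\tfrac12\approx 1.354$ is sharper. To bridge this gap and obtain $c_1=0.12$, I would handle the doubly-extremal configurations directly via the Fourier computation $\vol(\QQ_4\cap(a,a,b,b)^\perp)=1/a-b/(3a^2)$ (from Plancherel applied to $\prod_j\mathrm{sinc}^2(a_j t)$), and combine it with the Jensen--Ball bound everywhere else through a one-dimensional optimization on $\delta\in[0,1/4]$; this quantitative balancing follows the stability strategy of~\cite{CNT22}.
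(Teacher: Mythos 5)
Your reconstruction of the Jensen--Ball split is exactly the starting point of the cited proof (\cite[Lemma~6.7]{CNT22}), and your closed-form evaluation
$\E\min\{|X|^{-1},\gamma\}=\tfrac{1}{a_1}-\tfrac{(\rho-d)_+^2}{4a_1a_2\rho}$
is correct. You also correctly diagnose that this bound alone is too weak: at the doubly-extremal configuration with $\delta(a)=1/4$ it gives $\approx 1.393>\sqrt 2-0.12\cdot\tfrac12\approx1.354$. That observation is accurate and exposes the real difficulty, so credit is due for catching it.

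However, your proposed fix does not close the gap. The paper obtains $c_1=0.12$ by invoking the genuinely sharper two-sided estimate of \cite[Lemma~6.7]{CNT22},
\[
\vol(\QQ_n\cap a^\perp)\le \sqrt{2}\max\left\{\Big(1-\delta+\sqrt{\tfrac{\delta(2-\delta)}{5}}\Big)^{-1},\ (1-\delta)^{-2}\Big(1-\delta-\tfrac{\sqrt{\delta(2-\delta)}}{2\sqrt2}\Big)\right\},
\]
which at $\delta=1/4$ already evaluates to $\approx 1.352$; this is not the single Jensen--Ball application you carried out, but a refined case split whose worse branch still beats the target. By contrast, your plan to (a) compute $\vol(\QQ_4\cap(a,a,b,b)^\perp)$ by Fourier methods and (b) interpolate against the Jensen--Ball bound elsewhere cannot work as stated: the Jensen--Ball bound degrades over an entire neighbourhood of doubly-extremal configurations, in every dimension $n\ge4$, and also over configurations whose tail again splits into two near-equal large coordinates plus a smaller residual (e.g.\ $(a,a,b,b,c,c,\dots)$), none of which are covered by an isolated four-dimensional computation. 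Moreover, the asserted identity $\vol(\QQ_4\cap(a,a,b,b)^\perp)=1/a-b/(3a^2)$ is unsubstantiated and does not match the outcome of evaluating $\tfrac{1}{\pi}\int\prod_j\tfrac{\sin(a_jt)}{a_jt}\,\dd t$. What is actually needed is a bound of the CNT22 type that improves on the naive Jensen step in the regime where the tail is itself near-extremal, which requires either iterating the decomposition one level deeper or carrying a sharper two-dimensional estimate than $\min\{|X|^{-1},\gamma\}$. As it stands, your argument establishes some positive $c_1$, but not $c_1=0.12$, which is what the lemma asserts and what the rest of Section~\ref{sec:stab} needs.
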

\begin{proof}
We can assume that $n \geq 3$ and $a_1^2 + a_2^2 < 1$ (the missing cases follow by taking a limit). Leveraging a self-improving feature of Ball's inequality, the proof of \cite[Lemma 6.7]{CNT22} yields
\[
\vol\big(\QQ_n \cap a^\perp\big) \leq \sqrt{2}\max\left\{\left(1-\delta+\sqrt{\frac{\delta(2-\delta)}{5}}\right)^{-1}, (1-\delta)^{-2}\left(1-\delta-\frac{\sqrt{\delta(2-\delta)}}{2\sqrt{2}}\right)\right\},
\]
where $\delta = \delta(a)$. Denoting the maximum on the right-hand side by $M(\delta)$, we can take
\begin{equation*}
c_1 = \inf_{0 < \delta < 1/4}\sqrt{2}\frac{1-M(\delta)}{\sqrt{\delta}}.
\end{equation*}
Direct numerical calculations show that $c_1 > 0.12$.
\end{proof}

\subsubsection*{Cases 2 and 3.} We assume that $a$ is far from the extremizer but $a_1$ is not much larger than $\tfrac{1}{\sqrt{2}}$.

\begin{lemma}\label{lm:ball-stab-far1}
For every $n \geq 2$ and every unit vector $a$ in $\R^n$ with $a_1 \geq \dots \geq a_n \geq 0$ satisfying $\delta(a) \geq \frac{1}{4}$ and $a_1 \leq \frac{1}{\sqrt{2}}+\gamma_0$, we have
\begin{equation}\label{eq:ball-stab1}
\vol\big(\QQ_n \cap a^\perp\big) \leq \sqrt{2} - c_2,
\end{equation}
where $\gamma_0 = 3.2\cdot 10^{-5}$ and $c_2 = 0.0002$.
\end{lemma}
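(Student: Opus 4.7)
The argument parallels the combination of Lemmas \ref{lm:a1-small} and \ref{lm:a1-moderate} from the Szarek stability proof, with Haagerup's inequality \eqref{eq:Haa} replaced by Ball's Fourier-analytic master bound
\[
\vol\big(\QQ_n \cap a^\perp\big) \leq \prod_{j=1}^n g\!\left(\tfrac{1}{a_j^2}\right)^{a_j^2},
\]
valid whenever $a_1 \leq \tfrac{1}{\sqrt{2}}$, where $g:[2,\infty)\to(0,\sqrt{2}]$ is Ball's function, with $g(2)=\sqrt{2}$ and $g$ strictly decreasing. The plan is to split the proof in two sub-cases according to whether $a_1$ exceeds $\tfrac{1}{\sqrt{2}}$.

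\textbf{Sub-case $a_1 \leq \tfrac{1}{\sqrt{2}}$.} Since $\delta(a)\geq \tfrac{1}{4}$, one has $a_2 \leq \tfrac{a_1+a_2}{2} = \tfrac{2-\delta(a)}{2\sqrt{2}}\leq \tfrac{7}{8\sqrt{2}}$, so $\tfrac{1}{a_j^2}\geq l_0 := \tfrac{128}{49}>2$ for every $j\geq 2$. Monotonicity of $g$ and the master inequality give
\[
\vol\big(\QQ_n\cap a^\perp\big) \leq g(2)^{a_1^2}\,g(l_0)^{1-a_1^2} = \sqrt{2}\cdot\left(\tfrac{g(l_0)}{\sqrt{2}}\right)^{1-a_1^2} \leq \sqrt{2}\cdot\sqrt{\tfrac{g(l_0)}{\sqrt{2}}},
\]
where the last inequality uses $a_1^2\leq \tfrac{1}{2}$. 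A concrete numerical evaluation of $g(l_0)$ then yields an explicit deficit $c_2'>0$ with $\vol(\QQ_n\cap a^\perp)\leq \sqrt{2} - c_2'$.

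\textbf{Sub-case $\tfrac{1}{\sqrt{2}}\leq a_1\leq \tfrac{1}{\sqrt{2}}+\gamma_0$.} This range is outside the reach of Ball's inequality, so I would perturb $a$ into a vector that falls in the previous sub-case. Define, as in the proof of Lemma~\ref{lm:a1-moderate}, the unit vector $b = \big(\tfrac{1}{\sqrt{2}},\ \sqrt{a_1^2+a_2^2-\tfrac{1}{2}},\ a_3,\dots,a_n\big)$, whose coordinates remain non-increasing (using $a_1^2+a_2^2\leq 1$). A direct computation imitating \eqref{eq:estim-ba} gives $|a-b|\leq \sqrt{\gamma_0^2+2\gamma_0}$, and the same kind of reasoning as in the proof of Lemma~\ref{lm:a1-moderate} yields $\delta(b)\geq \delta(a)-2\sqrt{\gamma_0}\geq \tfrac{1}{4} - 2\sqrt{\gamma_0}$, which for $\gamma_0=3.2\cdot 10^{-5}$ remains bounded away from zero; applying the previous sub-case to $b$ (with the slightly weakened threshold) delivers $\vol(\QQ_n\cap b^\perp)\leq \sqrt{2} - c_2''$ for a concrete $c_2''>0$. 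To transfer this bound to $a$, I would invoke the Lipschitz estimate embedded in Lemma~\ref{lm:equicontinuity}, namely $|\vol(\QQ_n\cap a^\perp)-\vol(\QQ_n\cap b^\perp)|\leq 2|a-b|$, which is a direct consequence of the Busemann norm property of $\NN$ applied to two unit vectors. Combining the two and setting $c_2 = c_2''-2\sqrt{\gamma_0^2+2\gamma_0}$ closes the argument.

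The hard part is purely numerical: the prescribed values $\gamma_0=3.2\cdot 10^{-5}$ and $c_2=0.0002$ demand that $g(l_0)$ lie sufficiently below $\sqrt{2}$ for the improvement in the first sub-case to survive both the $\tfrac{1}{2}$-power loss coming from $a_1^2\leq \tfrac{1}{2}$ and the Lipschitz perturbation cost $2\sqrt{2\gamma_0}\approx 1.6\cdot 10^{-2}$ in the second sub-case. Verifying these inequalities will require a careful quantitative estimate of Ball's function at $s=l_0$ (likely via the explicit integral representation or a series/asymptotic expansion of $g$ near $s=2$), and potentially a sharper Lipschitz constant than the crude value $2$ borrowed from Lemma~\ref{lm:equicontinuity}.
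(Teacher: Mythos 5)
Your two sub-case architecture (apply Ball's Fourier inequality when $a_1\leq\tfrac{1}{\sqrt{2}}$, then perturb and use a Busemann-type Lipschitz estimate when $a_1$ barely exceeds $\tfrac{1}{\sqrt{2}}$) is exactly the skeleton of the paper's proof, which imports the argument from \cite[Lemma 6.8]{CNT22}. However, your first sub-case rests on a false premise. You assert that Ball's function $g$, i.e.\ $\Psi(s)=\tfrac{2}{\pi}\sqrt{s}\int_0^\infty|\tfrac{\sin t}{t}|^s\,\diff t$, is strictly decreasing on $[2,\infty)$, and then conclude $g(1/a_j^2)\leq g(l_0)$ for $j\geq 2$. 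This monotonicity is precisely what makes Haagerup's function $F$ so convenient in the Szarek proof (indeed the paper stresses that $F$ increasing ``will be crucial for us''), but it \emph{fails} for $\Psi$. Ball established $\Psi(s)<\sqrt{2}=\Psi(2)$ for $s>2$, yet $\Psi(\infty)=\sqrt{6/\pi}$, and the König--Koldobsky theorem \cite{KK19} gives $\Psi(s)\leq\sqrt{6/\pi}$ for all $s\geq\tfrac{9}{4}$; if $\Psi$ were monotone decreasing it would be strictly above its limit for every finite $s$, contradicting that bound. So $\Psi$ dips below its asymptotic value and comes back up -- it is not monotone -- and the step $g(1/a_j^2)\leq g(l_0)$ in your chain has no justification.

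What the paper uses instead is precisely that König--Koldobsky uniform estimate $\Psi(s)\leq\sqrt{6/\pi}$ for $s\geq\tfrac{9}{4}$, proven via the Nazarov--Podkorytov lemma. Since $a_2\leq\tfrac{7}{8\sqrt{2}}<\tfrac{2}{3}$, all indices $j\geq 2$ have $1/a_j^2\geq\tfrac{9}{4}$, and one plugs the uniform bound into $\prod_j\Psi(1/a_j^2)^{a_j^2}\leq\sqrt{2}^{\,a_1^2}\big(\sqrt{6/\pi}\big)^{1-a_1^2}\leq(12/\pi)^{1/4}$ using $a_1^2\leq\tfrac12$. This is the nontrivial analytic input your proposal omits, and without it the first sub-case produces no explicit deficit. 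A secondary point you gloss over: after perturbing $a$ to $b$, one has $\delta(b)\geq\delta(a)-2\sqrt{\gamma_0}$, which can drop below the $\tfrac14$ threshold; the paper therefore hedges with a $\min$ of the far-extremizer deficit and the near-extremizer stability constant $c_1$ from Lemma~\ref{lm:ball-stab-near}, and as you already observed the resulting numerics are extremely tight ($c_2\approx 2\cdot 10^{-4}$ after subtracting the Lipschitz cost $2\sqrt{\gamma_0^2+2\gamma_0}\approx 1.6\cdot 10^{-2}$), so one cannot afford imprecise constants here.
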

\begin{proof}
Here the proof relies on Fourier-analytic arguments. For  the special function
\begin{equation*}
\Psi(s) = \frac{2}{\pi}\sqrt{s}\int_0^\infty\left|\frac{\sin t}{t}\right|^s \dd t,
\end{equation*}
Ball showed in \cite{Bal86} that $\Psi(s) < \Psi(2) = \sqrt{2}$, for every $s > 2$. We need a robust version of this estimate.
Using the Nazarov--Podkorytov lemma \cite{NP00}, K\"onig and Koldobsky \cite{KK19} proved that
\begin{equation}\label{eq:KK-Psi}
\forall \ s\geq\frac{9}{4},\qquad \Psi(s) \leq \Psi(\infty) = \sqrt{\frac{6}{\pi}} = \sqrt{2}\left(\frac{3}{\pi}\right)^{1/2}
\end{equation}
(that is, $\theta_0 = \left(\frac{3}{\pi}\right)^{1/2}$ in the notation of \cite[Lemma~6.8]{CNT22}).
The argument now splits in two cases.

\smallskip

\noindent $\bullet$ \emph{Assume that $a_1 \leq \frac{1}{\sqrt{2}}$.} Provided that 
\[
s(a) \eqdef 2\Big(1-\frac{\delta(a)}{2}\Big)^{-2} \geq \frac94,
\]
which holds as long as $\delta(a) \geq 2\big(1-\frac{2\sqrt{2}}{3}\big) = 0.11..$,
with the aid of \eqref{eq:KK-Psi}, the arguments from \cite[Lemma~6.8]{CNT22} give the explicit estimate
\[
\vol\big(\QQ_n \cap a^\perp\big) \leq \left(\frac{3}{\pi}\right)^{1/4}\sqrt{2} = \sqrt{2} - \sqrt{2}(1-(3/\pi)^{1/4}).
\]
Therefore, we can take any
\begin{equation*}
c_2 \leq \sqrt{2}(1-(3/\pi)^{1/4}) = 0.016...
\end{equation*}

\smallskip

\noindent $\bullet$ \emph{Assume that $\frac{1}{\sqrt{2}} < a_1 \leq \frac{1}{\sqrt{2}} + \gamma_0$}.  Using Busemann's theorem \cite{Bus49}, this case is reduced in \cite[Lemma~6.8]{CNT22} to the previous range, which yields the bound
\[
\vol\big(\QQ_n \cap a^\perp\big) \leq \sqrt{2} - \sqrt{2}\min\left\{c_1\sqrt{\tfrac{1}{8} - \sqrt{\gamma_0}}, 1-(3/\pi)^{1/4}\right\} + 2\sqrt{\gamma_0^2+2\gamma_0},
\]
where $c_1$ is the constant from Lemma \ref{lm:ball-stab-near}.
With the choice of parameters $\gamma_0 = 3.2\cdot 10^{-5}$ and $c_1 = 0.12$, this estimate yields $\vol(\QQ_n \cap a^\perp)  \leq \sqrt{2} - 0.00021..$ and thus completes the proof. 
\end{proof}

\subsubsection*{Case 4.}
Finally, there is also a simple bound for the case that $a_1$ is much larger than $\tfrac{1}{\sqrt{2}}$.

\begin{lemma}\label{lm:ball-stab-far2}
For every $n \geq 2$ and every unit vector $a$ in $\R^n$ satisfying $a_1 \geq \frac{1}{\sqrt{2}}+\gamma_0$, we have
\begin{equation}\label{eq:ball-stab1}
\vol\big(\QQ_n \cap a^\perp\big) \leq \sqrt{2} - \frac{2\gamma_0}{1+\gamma_0\sqrt{2}}\sqrt{\delta(a)},
\end{equation}
where $\gamma_0=3.2\cdot10^{-5}$.
\end{lemma}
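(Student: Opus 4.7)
The plan is to exploit a simple projection argument, in parallel with the analogous Lemma~\ref{lm:a1-large} for Szarek's inequality. Specifically, I would project the section $\QQ_n \cap a^\perp$ orthogonally onto the coordinate hyperplane $e_1^\perp$. Since the angle between the hyperplanes $a^\perp$ and $e_1^\perp$ has cosine equal to $|\langle a, e_1\rangle| = a_1$, this projection contracts $(n-1)$-dimensional volumes precisely by the factor $a_1$. On the other hand, the image of $\QQ_n \cap a^\perp$ under the projection is contained in $\mathrm{Proj}_{e_1^\perp}\QQ_n = \QQ_{n-1}$, which has unit volume. This yields the classical bound
\begin{equation*}
\vol\big(\QQ_n \cap a^\perp\big) \leq \frac{1}{a_1}.
\end{equation*}

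Plugging in the hypothesis $a_1 \geq \tfrac{1}{\sqrt{2}}+\gamma_0$, we immediately obtain
\begin{equation*}
\vol\big(\QQ_n \cap a^\perp\big) \leq \frac{\sqrt{2}}{1+\gamma_0\sqrt{2}} = \sqrt{2} - \frac{2\gamma_0}{1+\gamma_0\sqrt{2}}.
\end{equation*}

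It remains to verify that this bound implies the claimed inequality, namely that $\sqrt{\delta(a)}\leq 1$ in this regime, so that $\frac{2\gamma_0}{1+\gamma_0\sqrt{2}}\sqrt{\delta(a)}\leq \frac{2\gamma_0}{1+\gamma_0\sqrt{2}}$. Indeed, from $a_2 \geq 0$ and $a_1 \geq \tfrac{1}{\sqrt{2}}+\gamma_0$ we have
\begin{equation*}
\delta(a) = 2 - \sqrt{2}(a_1+a_2) \leq 2 - \sqrt{2}\,a_1 \leq 1 - \sqrt{2}\gamma_0 < 1,
\end{equation*}
which closes the argument.

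There is no serious obstacle: this is the "easy" extremal regime, entirely parallel to the Rademacher case handled in Lemma~\ref{lm:a1-large} via the trivial lower bound $\E|\sum a_j\e_j|\geq a_1$. The only thing to be mindful of is to use the sharp projection identity (cosine equal to $a_1$) rather than a looser estimate, so that the constant $\tfrac{2\gamma_0}{1+\gamma_0\sqrt{2}}$ matches the statement exactly.
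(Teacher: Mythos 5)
Your proof is correct and follows exactly the paper's approach: both invoke Ball's geometric projection bound $\vol(\QQ_n\cap a^\perp)\leq 1/a_1$ (the paper by citation, you by spelling out the contraction-onto-$e_1^\perp$ argument) and then observe that $\delta(a)<1$ so the $\sqrt{\delta(a)}$ factor only weakens the bound. No meaningful differences.
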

\begin{proof}
By Ball's geometric projection argument (see \cite{Bal86,NP00}), we have $\vol(\QQ_n \cap a^\perp) \leq \frac{1}{a_1}$. Since $a_1 \geq \frac{1}{\sqrt{2}}+\gamma_0$ and hence $\delta(a) < 1$, we deduce that
\[
\vol(\QQ_n \cap a^\perp) \leq \frac{1}{\frac{1}{\sqrt{2}}+\gamma_0} = \sqrt{2} - \sqrt{2}\left(1 - \frac{1}{1+\gamma_0\sqrt{2}}\right) \leq \sqrt{2} - \frac{2\gamma_0}{1+\gamma_0\sqrt{2}}\sqrt{\delta(a)}. \qedhere
\]
\end{proof}

\subsubsection*{Constants}

Combining Lemmas \ref{lm:ball-stab-near}, \ref{lm:ball-stab-far1} and \ref{lm:ball-stab-far2},  and using that always $\delta(a) < 2$, we conclude that for all unit vectors $a$ in $\R^n$, we have the inequality
\[
\vol\big(\QQ_n \cap a^\perp\big) \leq \sqrt{2} - \kappa_\infty\sqrt{\delta(a)}
\]
with 
\[
\kappa_\infty \geq \min\left\{c_1, \frac{c_2}{\sqrt{2}}, \frac{2\gamma_0}{1+\gamma_0\sqrt{2}}\right\} > 6\cdot 10^{-5}.
\]
This completes the proof of Theorem \ref{thm:cnt}.\hfill$\Box$

\begin{remark}
We would like to stress that the arguments of this paper have not been optimized to give the best possible constants $p_0$ and $q_0$ in Theorems \ref{thm:sec} and \ref{thm:proj}. We instead chose to be fairly generous in various parts of the proof for the sake of clarity of the exposition.
\end{remark}

\subsection*{Acknowledgments.} 
We should very much like to thank the anonymous referees for reading carefully our manuscript and sharing many valuable comments which helped significantly improve the paper.


\bibliographystyle{plain}
\bibliography{extremal-sec-proj}

\end{document}